\documentclass[11pt, a4paper,reqno]{amsart}
\usepackage{a4,amsmath,amssymb,epic}
\usepackage{graphicx}
\usepackage{multicol}
\usepackage{rotating}
\usepackage{lscape}
\usepackage{amssymb,amsthm}
\usepackage{enumitem}
\usepackage[all]{xy}
\usepackage{tikz}
\usepackage{young}

\usepackage[vcentermath,enableskew]{youngtab}
\newtheorem{theorem}{Theorem}[section]
\newtheorem{proposition}[theorem]{Proposition}
\newtheorem{lemma}[theorem]{Lemma}

\newtheorem{corollary}[theorem]{Corollary}
\newtheorem{remark}[theorem]{Remark}
\DeclareGraphicsRule{.tif}{png}{.png}{`convert #1 `dirname #1`/`basename #1 .tif`.png}

\usepackage{tikz}

\title[Number of permutations with same peak set]{Number of permutations with same peak set for signed permutations}
\author[Castro-Velez]{Francis Castro-Velez}
\address{183 Memorial Drive\\
Cambridge, MA 02139}
\email{fncv@mit.edu  }
\author[Diaz-Lopez]{Alexander Diaz-Lopez}
\address{Department of Mathematics\\
University of Notre Dame\\
Notre Dame, IN 46556}
\email{adiaz4@nd.edu}
\author[Orellana]{Rosa Orellana}
\address{Department of Mathematics,
Dartmouth College,
6188 Kemeny Hall, 
Hanover, NH 03755, USA } 
\email{Rosa.C.Orellana@dartmouth.edu}
\author[Pastrana]{Jos\'e Pastrana}
\address{Department of Mathematics\\
University of Puerto Rico\\
 Barbosa Avenue\\
 San Juan, Puerto Rico 00931}
\email{pastrana.c.jose@gmail.com}
\author[Zevallos]{Rita Zevallos}
\address{Swarthmore College\\
500 College Avenue\\
Swarthmore, PA 19081}
\email{rzevall1@swarthmore.edu}

\begin{document}
\maketitle


\begin{abstract}

A signed permutation $\pi =  \pi_1\pi_2 \ldots \pi_n$  in the hyperoctahedral group $B_n$ is a word such that each $\pi_i \in \{-n, \ldots, -1, 1, \ldots, n\}$ and $\{|\pi_1|, |\pi_2|, \ldots, |\pi_n|\} = \left\{1,2,\ldots,n\right\}$. An index $i$ is a peak of $\pi$ if 
 $\pi_{i-1}<\pi_i>\pi_{i+1}$ and  $P_B(\pi)$  denotes the set of all peaks of $\pi$.  Given any set $S$, we define $P_B(S,n)$ to be the set of signed permutations $\pi \in B_n$ with $P_B(\pi) = S$.   In this paper we are interested in the cardinality of the set $P_B(S,n)$.     In \cite{sagan}, Billey, Burdzy and Sagan investigated the analogous problem for permutations in the symmetric group, $S_n$.   In this paper we extend their results to the hyperoctahedral group; in particular  we show that $\#P_B(S,n) = p(n)2^{2n-|S|-1}$ where $p(n)$ is the same polynomial found in \cite{sagan} which leads to the explicit computation of interesting special cases of the polynomial $p(n)$.  In addition we have extended these results to the case where we add $\pi_0=0$ at the beginning of the permutations, which gives rise to the possibility of a peak at position 1,  for both the symmetric and the hyperoctahedral groups.   
 

\end{abstract}

\section{Introduction}

A permutation $\pi=\pi_1\pi_2\ldots \pi_n$ in the symmetric group $S_n$ has a peak at index $i$ if 
$\pi_{i-1}<\pi_i>\pi_{i+1}$.   The peak set of $\pi$ is defined to be $P(\pi) = \{i\, |\, \mbox{ $i$ is a peak of $\pi$}\}$, then we define 
\[P(S,n) = \{ \pi \in S_n \, |\,  P(\pi) = S\}\]  to be the set of all permutations with the same peak set $S$.   For example, the permutation $\pi = 2\ 6 \ 5 \ 1 \ 4\ 3$ has peaks at position 2 and 5, hence $P(\pi, 6) = \{ 2,5\}$. 

Stembridge \cite{stembridge} was one of the first to study the combinatorics of peaks, in particular he gave a peak analog of Stanley's theory of poset partitions.  Additional interest in the study of peaks arose when Nyman \cite{nyman}  showed that summing permutations according to their peak sets leads to a non-unital subalgebra of the group algebra of the symmetric group.   

In a recent paper \cite{sagan} Billey, Burdzy and Sagan considered the cardinalities of the sets $P(S,n)$.  They discovered that $\#P(S,n) = p(n) 2^{n-\#S-1}$  for some polynomial $p(n)$ depending on $S$; they also computed special cases of the polynomial $p(n)$.  One motivation for studying peaks of permutations  lies in probability theory, in a recent paper Billey, Burdzy, Pal and Sagan \cite{billeysagan2} studied distributions on graphs that are related to random permutations with certain peak sets.   Besides the applicability to probability theory, the problem of enumerating permutations and signed permutations with respect to a given statistic is an interesting problem on its own, for example the enumeration of permutations related to peak sets has also been studied in \cite{kasraoui, strehl, warren}. 

It is natural that when a result related to the symmetric group (the Coxeter group of type A) is obtained one wishes to generalize it to other Coxeter groups.  In this paper we generalize the results in \cite{sagan} to the group of signed permutations, the hyperoctahedral group $B_n$ (the Coxeter group of type B).  A signed permutation  $\pi =  \pi_1\pi_2 \ldots \pi_n$ is a word such that each $\pi_i \in \{-n, \ldots, -1, 1, \ldots, n\}$ and $\{|\pi_1|, |\pi_2|, \ldots, |\pi_n|\} = \left\{1,2,\dots,n\right\}$.  A peak of a signed permutation is defined in exactly the same way as for regular permutations.   We will denote by $P_B(\pi)$ the set of peaks of a  signed permutation $\pi$ and define
\[P_B(S,n) = \{ \pi \in B_n\, |\, P_B(\pi) = S\}.\] 
 We show that $\#P_B(S,n) = 2^{2n-\#S-1} p_B(n)$, where $p_B(n)$ is the same polynomial as for the symmetric group, this generalizes Theorem 3 in \cite{sagan}.  We also consider special cases of the polynomials $p_B(n)$.     

Peaks for signed permutations are also of interest in the construction of algebraic structures.  In \cite{bergeron} Bergeron and Hohlweg have described peak analogues of the peak algebras  for the hyperoctahedral group and Petersen \cite{petersen} considered peak algebras of the hyperoctahedral group when the signed permutations are grouped by number of peaks. 

The second part of our paper considers the enumeration of peak classes when we put a zero at the beginning of the permutations for both the symmetric and hyperoctahedral groups.  That is, we consider permutations of the form $\pi_0\pi_1\cdots \pi_n$ where $\pi_0=0$. These permutations arose in the study of unital peak algebras of the symmetric group \cite{aguiarbergeronnyman, aguiarnymanorellana}.   In the case of the symmetric group, adding a zero at the beginning of every permutation has the effect of having the identity as the unique permutation with no peaks.  We denote by $\widehat{P}(S,n)$ the set of permutations with a zero added in $S_n$ with peak set $S$ and $\widehat{P}_B(S,n)$ the corresponding set for the hyperoctahedral group.   We generalize results obtained in \cite{sagan} to $\widehat{P}(S,n)$ and $\widehat{P}_B(S,n)$.  In particular, we give a method for computing $\#\widehat{P}(S, n)$ and $\#\widehat{P}_B(S,n)$ and compute these numbers for special sets $S$.

We now give a more detail description of the contents of this paper.  In Section \ref{Bn}, we prove that $\#P_B(S,n)=p_B(n)2^{2n-s-1}$ where $p_B(n)$ is an integral polynomial in terms of $n$. In addition, we show that the polynomials $p_B(n)$ in $B_n$ are equal to the polynomials $p(n)$ in $S_n$ found in \cite{sagan}.   We also show that the values for $\#P_B(S,n)$ are symmetric for a fixed $n$ as we vary the set $S$.   In Section \ref{B0n},  we provide a method to compute $\#\widehat{P}_B(S,n)$ for any $S$. We find that if $S=\emptyset$, $\#\widehat{P}_B(S,n)$ can be written in terms of the Stirling numbers of the second kind. Another result in this section gives us the parity of $\#\widehat{P}_B(S,n)$. Additionally, we find that $\#P_B(S,n)$ can be written as the sum of $\#\widehat{P}_B(S,n)$ and $\#\widehat{P}_B(S \cup \{1\},n)$ which gives us the results of the previous section when $2 \in S$. Finally, we calculate $\#\widehat{P}_B(S,n)$ for various specific sets $S$. 

Finally, in Section \ref{S0n}, we focus on the symmetric group,  we provide a method to compute $\#\widehat{P}(S,n)$ for any $S$. We also find that $\#P(S,n)$ can be written as the sum of $\#\widehat{P}(S,n)$ and $\#\widehat{P}(S \cup \{1\},n)$ which gives us the results in \cite{sagan} when $2 \in S$. Another result in this section gives us the parity of $\#\widehat{P}(S,n)$. Finally, we calculate $\#\widehat{P}(S,n)$ for various special cases of the set $S$.


\section{Signed Permutations in $B_n$}\label{Bn}

Let $B_n$ be the hyperoctahedral group, i.e.,  the group of signed permutations, and let $\pi = \pi_1 \pi_2 \ldots \pi_n$ be a permutation in $B_n$. Recall that we define a position $i \in \left\{2, \ldots ,n-1\right\}$ as a peak if $\pi_{i-1} < \pi_i > \pi_{i+1}$, and the set $P_B(\pi) $ as the set of all peaks of $\pi$. 

Define a set $S = \left\{i_1 < \dots <i_s \right\}$ to be $n$-admissible if $\#P_B(S,n) \not= 0$. Note that we insist the elements be listed in increasing order.  Notice that $S$ cannot contain two consecutive integers and $S$ is a subset of $\{2, \ldots, n-1\}$.  The minimum possible value of $n$ for which $S$ is $n$-admissible is $i_s+1$, and in that case $S$ is $n$-admissible for all $n \geq i_s+1$. If we make a statement about an admissible set $S$, we mean that $S$ is $n$-admissible for some $n$ and the statement holds for every $n$ such that $S$ is $n$-admissible. It is well-known that the number of $n$-admissible sets is the $n$-th Fibonacci number.  We include a proof for completeness.

\begin{proposition}\label{thm-nadmissible0}
Let $A_n$ be the set of $n$-admissible peak sets $S$. Then the size of $A_n$ is given by the $n$-th Fibonacci number. 
\end{proposition}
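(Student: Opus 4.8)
The plan is to first pin down exactly which sets are admissible and then count them with a Fibonacci recurrence. The excerpt already records the necessary conditions: an $n$-admissible $S$ must satisfy $S\subseteq\{2,\dots,n-1\}$ and must contain no two consecutive integers. The first step is to prove the converse, that every such $S$ is in fact a peak set, so that these two conditions characterize admissibility. For this I would pass through descent sets. Writing $\mathrm{Des}(\pi)=\{k : \pi_k>\pi_{k+1}\}$, an interior index $i$ is a peak of $\pi$ precisely when $i-1\notin\mathrm{Des}(\pi)$ and $i\in\mathrm{Des}(\pi)$. Since every subset of $\{1,\dots,n-1\}$ occurs as the descent set of some ordinary permutation, and since $S_n\subseteq B_n$, I may pick $\pi$ with $\mathrm{Des}(\pi)=S$; then the peaks of $\pi$ are exactly the $i\in S$ with $i-1\notin S$, and because $S$ has no two consecutive elements this is all of $S$. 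Hence $P_B(\pi)=S$ and $S$ is $n$-admissible.

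With the characterization ``$S\subseteq\{2,\dots,n-1\}$ with no two consecutive integers'' in hand, write $a_n=\#A_n$ and derive a recurrence by testing membership of the largest candidate element $n-1$. If $n-1\notin S$, then $S$ is a subset of $\{2,\dots,n-2\}$ with no two consecutive integers, i.e.\ precisely an $(n-1)$-admissible set, and there are $a_{n-1}$ of these. If $n-1\in S$, the no-consecutive condition forces $n-2\notin S$, so $S\setminus\{n-1\}$ is an arbitrary subset of $\{2,\dots,n-3\}$ with no two consecutive integers; deletion of $n-1$ is a bijection onto the $(n-2)$-admissible sets, giving $a_{n-2}$ of them. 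The two cases are disjoint and exhaustive, so $a_n=a_{n-1}+a_{n-2}$ for $n\ge 3$.

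Finally I would verify the base cases: for $n=1$ and $n=2$ the index set $\{2,\dots,n-1\}$ is empty, so $S=\emptyset$ is the only admissible set and $a_1=a_2=1$. These initial values together with the recurrence $a_n=a_{n-1}+a_{n-2}$ are exactly those of the Fibonacci sequence, whence $a_n$ is the $n$-th Fibonacci number. Of the ingredients above, the recurrence and the base cases are routine bookkeeping; the one step carrying real content is the sufficiency half of the characterization, and the point to get right there is that taking $\mathrm{Des}(\pi)=S$ introduces no spurious peaks, which is exactly what the absence of consecutive integers in $S$ guarantees.
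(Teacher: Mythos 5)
Your proposal is correct, and its counting core --- splitting $A_n$ according to whether $n-1\in S$, identifying the sets without $n-1$ with $A_{n-1}$ and those with $n-1$ (after deleting it) with $A_{n-2}$, and checking the base cases $a_1=a_2=1$ --- is exactly the paper's argument. Where you genuinely add something is in the groundwork: the paper's proof silently relies on the claim, stated before the proposition without proof, that every $S\subseteq\{2,\dots,n-1\}$ with no two consecutive elements is actually realized as a peak set (equivalently, that admissibility for some $n$ persists for all larger $n$); this realizability is what legitimizes both identifications in the recurrence, in particular that adjoining $n-1$ to an $(n-2)$-admissible set produces an $n$-admissible set. You supply that missing ingredient with the descent-set lemma: choose $\pi\in S_n\subseteq B_n$ with $\mathrm{Des}(\pi)=S$, note that an interior index $i$ is a peak exactly when $i\in\mathrm{Des}(\pi)$ and $i-1\notin\mathrm{Des}(\pi)$, and conclude from the no-consecutive-elements hypothesis that $P_B(\pi)=S$. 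So your write-up is self-contained where the paper's is not; the only external input you use is the standard fact that every subset of $\{1,\dots,n-1\}$ occurs as a descent set, which is considerably easier than the realizability of peak sets and is routine to verify directly.
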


\begin{proof}
Note that $A_1 = A_2 = \{\emptyset\}$, thus the result holds for $n=1$ and $n=2$.

Now consider $n \geq 3$. We can write $A_n$ as a union of disjoint sets $A_\alpha$ and $A_\beta$ where $A_\alpha$ is the set of $n$-admissible sets that do not contain $n-1$, and $A_\beta$ is the set of $n$-admissible sets that do contain $n-1$. But note that since $A_{n-1}$ contains all $(n-1)$-admissible peak sets $S$, which cannot contain the element $n-1$, it must be equal to $A_\alpha$. Also, adding $n-1$ to all the peak sets in $A_{n-2}$ gives us $A_\beta$.  Therefore $|A_n| = |A_{n-1}| + |A_{n-2}|$.

\end{proof}


If we fix $n$ and the cardinality of the set $S$, then there exists a set $T$ of the same cardinality as $S$ such that $\#P_B(S,n) = \#P_B(T,n)$.  We make this symmetry property more explicit in the following  proposition. 

\begin{proposition}
Let $S = \left\{i_1,i_2,\ldots,i_k\right\}$ and $T = \left\{n+1-i_k,\ldots ,n+1-i_1\right\}$. Then $\#P_B(S,n) = \#P_B(T,n)$.
\end{proposition}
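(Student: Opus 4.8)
The plan is to construct an explicit bijection between $P_B(S,n)$ and $P_B(T,n)$, from which the equality of cardinalities follows immediately. The natural candidate is the \emph{reversal} map $\rho : B_n \to B_n$ that sends a signed permutation $\pi = \pi_1\pi_2\cdots\pi_n$ to the word read backwards, $\rho(\pi) = \pi_n\pi_{n-1}\cdots\pi_1$; equivalently $\rho(\pi)_j = \pi_{n+1-j}$ for each $j$. First I would verify that $\rho$ is a well-defined bijection of $B_n$: reversing a word does not change the multiset of entries nor their signs, so $\rho(\pi)$ is again a signed permutation, and $\rho$ is clearly its own inverse, hence a bijection.

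The key step is to track what $\rho$ does to peaks. A position $i \in \{2,\ldots,n-1\}$ is a peak of $\pi$ exactly when $\pi_{i-1} < \pi_i > \pi_{i+1}$. Writing $\sigma = \rho(\pi)$ and substituting $\sigma_j = \pi_{n+1-j}$, the peak condition $\sigma_{j-1} < \sigma_j > \sigma_{j+1}$ at position $j$ becomes $\pi_{n+2-j} < \pi_{n+1-j} > \pi_{n-j}$, which upon setting $i = n+1-j$ is precisely the peak condition at position $i$ of $\pi$. Here I would note that the comparisons are just the usual total order on integers, so the presence of negative entries causes no complication. Since $i$ ranges over $\{2,\ldots,n-1\}$ and $j = n+1-i$ ranges over the same set, this shows $i \in P_B(\pi)$ if and only if $n+1-i \in P_B(\rho(\pi))$; in other words $P_B(\rho(\pi)) = \{\, n+1-i : i \in P_B(\pi)\,\}$.

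Finally I would assemble the pieces. If $\pi \in P_B(S,n)$, then $P_B(\rho(\pi)) = \{n+1-i : i \in S\}$, and since $S = \{i_1 < \cdots < i_k\}$ this set, listed in increasing order, is exactly $T = \{n+1-i_k, \ldots, n+1-i_1\}$. Thus $\rho$ restricts to a map $P_B(S,n) \to P_B(T,n)$, and because $\rho$ is an involution the same argument with the roles of $S$ and $T$ interchanged shows this restriction is a bijection with inverse $\rho$ itself. Hence $\#P_B(S,n) = \#P_B(T,n)$. I do not expect a serious obstacle here; the only point that genuinely requires care is the index bookkeeping in the substitution $i = n+1-j$ and the verification that the image peak set, once reordered increasingly, matches the stated description of $T$.
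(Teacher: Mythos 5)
Your proposal is correct and is essentially identical to the paper's own proof: both use the reversal map $\pi_1\cdots\pi_n \mapsto \pi_n\cdots\pi_1$, observe it is an involution on $B_n$, and check that it sends a peak at position $i$ to a peak at position $n+1-i$, hence restricts to a bijection $P_B(S,n) \to P_B(T,n)$. Your write-up is just somewhat more explicit about the index bookkeeping than the paper's.
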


\begin{proof}

Define $f: B_n \rightarrow B_n$ such that if $\pi = \pi_1\pi_2 \cdots \pi_n$, $f(\pi) = \pi_n \cdots \pi_2\pi_1$. 
Note that $f$ is an involution, because $f(f(\pi)) = \pi$.

Now let $\rho=\rho_1\rho_2\hdots\rho_n \in P_B(S,n)$. If $j$ is a peak of $\rho$ then $n+1-j$ is a peak of $f(\rho)$, hence the peak set of $f(\rho)$ is $\left\{n+1-i_k,\ldots,n+1-i_2,n+1-i_1\right\}$. Thus $f(\rho) \in P_B(T,n)$.  Similarly, if $f(\pi)\in P_B(T,n)$ we can show that $f(f(\pi))\in P_B(S,n)$.  Therefore $\#P_B(S,n) = \#P_B(T,n)$.
\end{proof}

\noindent{\bf Remark: } Note that since $S_n \subseteq B_n$, this result holds in $S_n$ as well.

\medskip

We now prove the following special case which will be the base case for our induction later on.  In what follows we set $[n] = \{ 1, 2, \ldots, n\}$. 
\begin{proposition}\label{prop:nopeakssigned}
The number of signed permutations with no peaks is equal to $2^{2n-1}$.
\end{proposition}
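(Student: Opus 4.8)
The plan is to give a direct combinatorial characterization of the peak-free signed permutations and then count them. First I would encode each $\pi = \pi_1\cdots\pi_n \in B_n$ by its ascent--descent word $u_1\cdots u_{n-1} \in \{U,D\}^{n-1}$, where $u_j = U$ if $\pi_j < \pi_{j+1}$ and $u_j = D$ if $\pi_j > \pi_{j+1}$ (the entries are distinct, so there are no ties). By definition a peak at an interior position $i \in \{2,\ldots,n-1\}$ is exactly an occurrence of the factor $u_{i-1}u_i = UD$, and these factors range over all adjacent pairs of the word. Hence $\pi$ has no peaks if and only if its word contains no factor $UD$.

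Next I would observe that a word in $\{U,D\}^{n-1}$ avoiding the factor $UD$ must have the form $D^a U^b$ with $a+b = n-1$: once a $U$ appears it can only be followed by further $U$'s. Translating back, this says $\pi$ is \emph{V-shaped}, i.e. it strictly decreases to a single valley and then strictly increases,
\[
\pi_1 > \pi_2 > \cdots > \pi_k < \pi_{k+1} < \cdots < \pi_n,
\]
for some position $k$ (the extreme cases $a=0$ and $b=0$, purely increasing or purely decreasing, are included).

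Then I would count the V-shaped signed permutations by splitting the construction into two independent choices. First choose the underlying signed values: for each $i \in [n]$ select $+i$ or $-i$, giving $2^n$ possibilities. Given these $n$ distinct signed values, the unique smallest one is forced to occupy the valley $\pi_k$; each of the remaining $n-1$ values is then assigned either to the left of the valley or to the right, and its side determines its position completely (left values must appear in decreasing order, right values in increasing order). This yields $2^{n-1}$ arrangements. Multiplying the two counts gives $2^n \cdot 2^{n-1} = 2^{2n-1}$ peak-free signed permutations, as claimed.

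The two enumerations are routine; the content of the argument sits in the equivalence between ``no peaks'' and ``V-shaped.'' I expect the step needing the most care to be showing that avoiding the factor $UD$ forces the global form $D^aU^b$ (hence a single valley), together with the bookkeeping that, for a fixed choice of signs, each left/right assignment of the non-minimal values produces a distinct valid V-shape and that every V-shape arises exactly once in this way.
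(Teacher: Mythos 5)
Your proof is correct. It rests on the same structural fact as the paper's proof --- a peak-free signed permutation is V-shaped, with the minimum at the valley and each remaining value assigned freely to the left (decreasing) or right (increasing) side --- but your counting is organized differently and more cleanly. The paper conditions on the value $k$ of the minimum entry: the case $k=1$ (all entries positive) contributes $2^{n-1}$, and for each negative minimum $k=-m$ the entries of absolute value exceeding $m$ are forced positive while the smaller ones carry a free sign, giving $2^{m-1}\cdot 2^{n-1}$; summing the resulting geometric series $\sum_{m=1}^{n} 2^{n-1}2^{m-1} = 2^{2n-1}-2^{n-1}$ and adding the first case yields $2^{2n-1}$. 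You instead decouple the two choices entirely: the multiset of values is a free choice of one sign per element of $[n]$ ($2^n$ ways), and, for \emph{any} fixed set of $n$ distinct values, the peak-free arrangements number exactly $2^{n-1}$, independent of what those values are. This removes the case split and the geometric series, replacing them with a direct product $2^n\cdot 2^{n-1}$. Your version also isolates the reusable lemma (any $n$ distinct reals admit exactly $2^{n-1}$ V-shaped orderings), which is the same ingredient the paper implicitly reuses inside both of its cases; making it explicit is a genuine simplification. One small point worth keeping in a final write-up is the adjacency argument you flagged: avoiding the \emph{adjacent} factor $UD$ does force all $D$'s before all $U$'s, since a $U$ somewhere before a $D$ would, by taking the first $D$ after that $U$, produce an adjacent $UD$ pair.
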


\begin{proof}
Let $\pi=\pi_1 \pi_2 \hdots \pi_n$ be a signed permutation, and $k=\min(\pi_1, \pi_2, \hdots, \pi_n)$.  Since $\{|\pi_1|, |\pi_2|, \ldots, |\pi_n|\} =[n]$ then there is a $\pi_j$ with $|\pi_j|=1$.  Hence $k=1$ or $k<0$. 


Thus the number of signed permutations with no peaks can be divided into those with $k=1$ and those with $k<0$. Let us first assume that $k=1$. Then, we can write $\pi = k_11k_2$ where $k_1$ denotes the portion to the left of $1$ and $k_2$ denotes the portion to the right of $1$. To have $P_B(\pi)=\emptyset$, $k_1$ must be decreasing and $k_2$ must be increasing. We notice that the size of $k_1$ ranges from $0$ to $n-1$ and the values of $\pi_i$ in $k_1$ and $k_2$ range from $2$ to $n$ because $k=1$. This means that the number of signed permutations with no peaks when $k=1$ is equal to the number choices of a subset of elements from $\{2,\ldots, n\}$ to be in $k_1$ since after the choices are made, the rest of $\pi$ is determined. This implies that the number of signed permutations with no peaks and $k=1$ equals to \[\sum_{j=0}^{n-1} \binom{n-1}{j}=2^{n-1}.\] 

Now let $k<0$. Note that the numbers that can go in $\pi$ are from the set $A=\{k+1, k+2, \hdots, -1, 1, \hdots, n \}$. We also notice that the size of $k_1$ can vary from $0$ to $n-1$ again. Although we have more than $n-1$ options available to put in $k_1$, we  note that if an integer $m \in [n]$ is in the permutation, then $-m$ cannot be, and vice versa. As a result we have $n-1$ numbers to put in $k_1$ and then we multiply by $2$ for every element in $A$ for which its negative is in $A$ as well. Furthermore the number of signed permutations with no peaks and $k$ fixed, where $k<0$ equals $2^{n-1} \cdot 2^{k-1}$ (since we are multiplying by 2 for every element in $\{k+1, k+2, \hdots -1\}$). Hence the total number of signed permutations with no peaks and $k<0$ is 
\[\sum_{k=1}^{n} 2^{n-1} \cdot 2^{k-1}=2^{n-1} \sum_{k=1}^{n} 2^{k-1}=2^{n-1}(2^n-1)=2^{2n-1}-2^{n-1}.\] 
But we have to add in what we obtained in the $k=1$ case to get the total number of signed permutations with no peaks, then the number is $2^{2n-1}$.
\end{proof}


We now show that the results in \cite{sagan} for the symmetric group extend to the case of sign permutations. 


\begin{theorem}\label{thm:maintheorem} Let $S=\{i_1, i_2, \ldots, i_s\}$ be admissible. Then
\[ \#P_B(S,n) = p_B(n)2^{2n-s-1}, \] 
where $p_B(n)=p_B(S,n)$ is a polynomial depending on $S$ such that $p_B(n)$ is an integer for all integral $n$. In addition, the degree of $p_B(n) = i_s - 1$ (when $S = \emptyset$, the degree of $p_B(n) = 0$).

\end{theorem}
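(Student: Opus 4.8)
The plan is to argue by induction on $s=\#S$, taking Proposition~\ref{prop:nopeakssigned} as the base case $s=0$: it gives $\#P_B(\emptyset,n)=2^{2n-1}=p_B(n)\,2^{2n-1}$ with the constant polynomial $p_B(n)=1$, which has degree $0$ as required.

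For the inductive step, let $m=i_s=\max S$ and set $R=S\setminus\{m\}=\{i_1,\dots,i_{s-1}\}$, an admissible set with $s-1$ peaks whose largest element is $i_{s-1}\le m-2$. Since $m$ is the last peak, every $\pi\in P_B(S,n)$ satisfies $\pi_{m-1}<\pi_m>\pi_{m+1}$ and has no peak among the positions $m+1,\dots,n-1$; hence the suffix $\pi_m\pi_{m+1}\cdots\pi_n$ is valley shaped (strictly decreasing to a minimum, then strictly increasing). I would peel off this last peak: delete the entry sitting at the maximal peak together with the data of the valley-shaped suffix, standardize the remaining signed word order-isomorphically, and record the peak set $R$ of the shorter signed permutation. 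Conversely, a word counted by $\#P_B(R,\cdot)$ is rebuilt by choosing which values occupy the positions $i_{s-1}+1,\dots,n$, choosing their signs, and arranging them into the prescribed valley so as to create the peak at $m$ and no others. Carrying out this count should express $\#P_B(S,n)$ as a sum of terms of the form $\#P_B(R,n')$ times a product of binomial coefficients $\binom{n-c}{k}$ and a power of $2$ recording the free sign choices.

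By the induction hypothesis each $\#P_B(R,n')$ equals $p_B(R,n')\,2^{2n'-(s-1)-1}$ with $p_B(R,\cdot)$ an integer-valued polynomial of degree $i_{s-1}-1$. Substituting and collecting the powers of two, I expect the accumulated sign choices together with the factor carried from $R$ to combine to exactly $2^{2n-s-1}$, leaving a factor $p_B(n)$ that is a $\mathbb{Z}$-linear combination of products of values of $p_B(R,\cdot)$ with binomial coefficients whose upper arguments are affine in $n$. Each such product is a polynomial in $n$, so $p_B(n)$ is a polynomial; since $\#P_B(S,n)/2^{2n-s-1}$ is an integer for every admissible $n$, the polynomial $p_B(n)$ is integer-valued. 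For the degree, the reinsertion should range over the positions $i_{s-1}+1,\dots,i_s$ freed up by restoring the top peak (with the convention $i_0:=1$ when $R=\varnothing$), so the binomial factors raise the degree by $i_s-i_{s-1}$; telescoping against $\deg p_B(R,\cdot)=i_{s-1}-1$ yields $\deg p_B(n)=i_s-1$, matching the base case where a single peak $\{m\}$ descends from $\emptyset$ with a degree jump of $m-1$.

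\textbf{The main obstacle} will be making the peel-off map and its inverse genuinely bijective in the signed setting. Deleting the entry at the last peak can create or destroy peaks at the neighboring position $m-1$ and at the new junction produced after standardization, so the recursion must be arranged so that exactly the peak at $m$ is removed and the set $R$ is recovered unchanged; this forces a careful case analysis of the relative order of $\pi_{m-1},\pi_{m+1}$ and their neighbors. Compounding this, standardizing a signed word supported on a non-initial set of absolute values back into some $B_{n'}$ must be carried out compatibly with the sign symmetry $v\leftrightarrow -v$, and the bookkeeping of sign choices must be organized so that the powers of two assemble into precisely $2^{2n-s-1}$ rather than some other power. Verifying that the two tallies—the freed positions and the freed signs—combine to give this clean factorization, and ultimately that the polynomial produced coincides with the symmetric-group polynomial $p(n)$ of \cite{sagan}, is where the real work lies.
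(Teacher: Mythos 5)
There is a genuine gap, and it is precisely the one you flag at the end as ``the main obstacle'': your argument is organized around a peel-off bijection that you never construct, and the difficulty it runs into is not a technicality but the structural reason the paper does \emph{not} argue bijectively. When you delete the top peak and its valley-shaped suffix and then try to rebuild, the number of valid reinsertions is \emph{not} a product of binomial coefficients and sign choices independent of the prefix: whether the concatenation acquires a peak at position $i_s-1$, at $i_s$, or at neither depends on the relative values at the junction, so the count of suffix arrangements ``creating the peak at $m$ and no others'' varies with the prefix. The paper's proof (following \cite{sagan}) sidesteps this by counting in two ways the larger set $\Pi$ of signed permutations whose first $i_s-1$ entries have peak set $S_1=S-\{i_s\}$ and whose entries from position $i_s$ onward have no peaks. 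Membership in $\Pi$ imposes \emph{no} condition at the junction, which is exactly why $\#\Pi=\binom{n}{i_s-1}\,\#P_B(S_1,i_s-1)\,\#P_B(\emptyset,n-i_s+1)$ factors cleanly; the junction interference is then handled by the disjoint decomposition $\#\Pi=\#P_B(S,n)+\#P_B(S_1,n)+\#P_B(S_2,n)$ with $S_2=S_1\cup\{i_s-1\}$, and one solves for $\#P_B(S,n)$ by subtraction rather than by a bijection.

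This exposes a second, compounding problem with your setup: you induct on $s=\#S$, but the unavoidable correction term involves $S_2$, which has the \emph{same} cardinality $s$ (only its largest element drops by one), so an induction on $s$ alone cannot invoke the hypothesis for $S_2$ and cannot close. The paper inducts on $i_1+i_2+\cdots+i_s$, which strictly decreases for both $S_1$ and $S_2$; a double induction on $(s,i_s)$ would also work, but plain induction on $s$ does not. Finally, a minor point: your degree bookkeeping reaches the right number $i_s-1$ by a telescoping heuristic, but in the actual identity $p_B(S,n)=p_1(k)\binom{n}{k}-2p_1(n)-p_2(n)$ with $k=i_s-1$, the factor $p_1(k)$ is a \emph{constant} (the polynomial $p_1$ evaluated at the fixed integer $k$), so the degree $k$ comes entirely from $\binom{n}{k}$, which dominates $\deg p_1(n)=i_{s-1}-1$ and $\deg p_2(n)=i_s-2$; similarly, the power $2^{2n-s-1}$ emerges from combining the inductively given powers $2^{2k-s}$ and $2^{2(n-k)-1}$, not from a separate tally of sign choices.
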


\begin{proof}
We prove by induction on $i_1 + i_2 + \cdots + i_s$, following the argument in \cite{sagan}. We previously showed that $\#P_B(\emptyset,n) = 2^{2n-1}$, thus our claim is true for the base case. Our inductive hypothesis is that our claim is true for any set $\hat{S} = \left\{r_1, r_2, \ldots, r_t\right\}$ where $r_1 + r_2 + \ldots + r_t < i_1 + i_2 + \cdots + i_s$. 

We let $k = i_s - 1$. For any $n \geq i_s$, let $\Pi$ be the set of all signed permutations $\pi = \pi_1\pi_2\ldots \pi_n$ such that $P_B(\pi_1\pi_2\ldots \pi_k) = S_1 = S - \{i_s\}$ and $P_B(\pi_{i_s}\ldots \pi_n) = \emptyset$. We can partition $\Pi$ into blocks by the peak set of $\pi$.  In addition to the peaks given by $S_1 = S-\{i_s\}$, there could be a peak at $\pi_{k}$, a peak at $\pi_{i_s}$, or no peak at both $\pi_k$ and $\pi_{i_s}$. Note that these are all the possibilities, and that the three are disjoint. Thus, if we let $S_2 = S_1 \cup \left\{i_s-1\right\}$,  then
\begin{equation}
\#\Pi = \#P_B(S_2,n) + \# P_B(S,n) + \#P_B(S_1,n). 
\end{equation}

First, we find $\#\Pi$. Recall that for $\pi \in \Pi$, $P_B(\pi_1\ldots \pi_k) = S_1$ and $P_B(\pi_{i_s}\ldots \pi_n)$ equals  $\emptyset$. Therefore to construct any $\pi$, first we choose $k = i_s - 1$ elements to be in the first section. For signed permutations, if an integer $m \in [n]$ is in the permutation, then $-m$ cannot be, and vice versa. Therefore we choose $k$ elements from a set of $n$ elements. The number of ways to do so is ${n \choose k}$. Then we create a signed permutation from these $n$ elements, arranged in a way such that their peak set is $S_1$. We have denoted the number of ways to do so by $\#P_B(S_1,k)$. Finally we arrange the last $n-k$ items such that their peak set is $\emptyset$. The number of ways to do this is $\#P_B(\emptyset, n-k)$. Therefore, $\#\Pi = {n \choose k}\#P_B(S_1,k)\#P_B(\emptyset,n-k)$.

By our inductive assumption, $\#P_B(S_1,k) = p_1(k)2^{2k-(s-1)-1} = p_1(k)2^{2k-s}$, where $p_1(k)$ is a polynomial of degree $i_{s-1}-1$, and we know from the base case that $\#P_B(\emptyset,n-k) = 2^{2(n-k)-1} = 2^{2n-2k-1}$. Thus, \[ \#\Pi = p_1(k){n \choose k}2^{2n-s-1}.\]

Similarly, we find that by our inductive assumption, $\#P_B(S_1,n) = p_1(n)2^{2n-s}$ and $\#P_B(S_2,n) = p_2(n)2^{2n-s-1}$, where $p_2(n)$ has degree $i_s-2$.  Thus, 
\begin{align*}
\#P_B(S,n) &= \#\Pi - \#P_B(S_1,n) - \#P_B(S_2,n)\\
&= p_1(k){n \choose k}2^{2n-s-1}-p_1(n)2^{2n-s}-p_2(n)2^{2n-s-1}\\
 &= \left(p_1(k){n \choose k} - 2p_1(n)-p_2(n)\right)2^{2n-s-1}.
\end{align*}

Note that $p_1(k){n \choose k}$ has degree $k$. We know $2p_1(n)$ has degree $i_{s-1}-1 < k$, and $p_2(n)$ has degree $i_s-2 < k$, then the coefficient of $2^{2n-s-1}$ is a polynomial of degree $k = i_s-1$. Since ${n \choose k}$, $2p_1(n)$, and $p_2(n)$ all have integral values at integer $n$, the difference also has integral values at integer $n$.
\end{proof}


The proof of Theorem \ref{thm:maintheorem} immediately yields the following recursive formula for the polynomial $p_B(S,n)$.
\begin{corollary}\label{cor:maincorollary}

If $S \not= \emptyset$ is admissible and $m=\max S$ then \[p_B(S,n)=p_1(m-1) \binom{n}{m-1} -2p_1(n)-p_2(n)\] where $S_1=S-\{m\}, S_2=S_1 \cup \{m-1\}$, and $p_i(n)=p_B(S_i,n)$ for $i=1,2$.

\end{corollary}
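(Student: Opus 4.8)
The plan is to read the formula off directly from the proof of Theorem~\ref{thm:maintheorem}, since that proof already computes $\#P_B(S,n)$ as an explicit integer multiple of $2^{2n-s-1}$. By the very definition of the polynomial $p_B$, we have $\#P_B(S,n) = p_B(S,n)\,2^{2n-s-1}$, so to identify $p_B(S,n)$ it suffices to divide the computed count by $2^{2n-s-1}$ and record the coefficient. Thus the only work is to match the notation of the corollary to that of the theorem.

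First I would set $m = \max S = i_s$ and observe that the integer $k = i_s - 1$ used in the theorem's proof is exactly $m-1$, that the sets $S_1 = S - \{i_s\}$ and $S_2 = S_1 \cup \{i_s-1\}$ are precisely the sets named in the corollary, and that the polynomials $p_1 = p_B(S_1,\cdot)$ and $p_2 = p_B(S_2,\cdot)$ coincide with those appearing in the inductive step. With these identifications, I would invoke the final display of the theorem's proof,
\[
\#P_B(S,n) = \left( p_1(k)\binom{n}{k} - 2p_1(n) - p_2(n) \right) 2^{2n-s-1}.
\]
Dividing both sides by $2^{2n-s-1}$ and substituting $k = m-1$ yields
\[
p_B(S,n) = p_1(m-1)\binom{n}{m-1} - 2p_1(n) - p_2(n),
\]
which is the claimed identity.

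Because the entire computation was carried out inside the theorem's proof, there is no genuine obstacle; the statement is indeed immediate. The one point worth a moment's care is the status of $S_2$: while $S_1$ is automatically admissible as a subset of the admissible set $S$, the set $S_2 = S_1 \cup \{i_s-1\}$ fails to be admissible exactly when $i_s - i_{s-1} = 2$, since then $i_{s-1}$ and $i_s-1$ are consecutive. In that degenerate case $\#P_B(S_2,n) = 0$, so $p_2 = p_B(S_2,n) \equiv 0$, and the formula continues to hold verbatim. Hence the recursion is valid for every admissible nonempty $S$ and every $n \geq m$ in the range where all three quantities $P_B(S,n)$, $P_B(S_1,n)$, $P_B(S_2,n)$ are simultaneously meaningful, with no restriction beyond those already present in the theorem.
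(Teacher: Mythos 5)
Your proposal is correct and follows essentially the same route as the paper: the paper's own proof simply identifies $m=i_s$, $k=m-1$ and reads the formula off the final display in the proof of Theorem~\ref{thm:maintheorem}, exactly as you do. Your extra remark that $S_2$ may be non-admissible (when $i_s-i_{s-1}=2$), in which case $p_2\equiv 0$ and the formula still holds, is a small point of care that the paper leaves implicit.
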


\begin{proof}
Looking at the proof of Theorem \ref{thm:maintheorem}, we can see that the greatest element in $S$ is $i_s$ which means that $m=i_s$. We also let $k=i_s-1=m-1$, then by Theorem \ref{thm:maintheorem} we have
\begin{align*}
p_B(S,n)= p_1(k) \binom{n}{k} -2p_1(n)-p_2(n)=p_1(m-1){n \choose m-1} -2p_1(n)-p_2(n).           
\end{align*}
\end{proof}


The recursive formula for the polynomial $p_B(n) = p_B(S,n)$ in terms of $p_B(S_1,k)$, $p_B(S_1,n)$, and $p_B(S_2,n)$, given in Corollary \ref{cor:maincorollary}, is identical to the recursive formula for the polynomial when working in $S_n$, where Theorem 3 in \cite{sagan} shows that $\#P(S,n) = p(n)2^{n-|S|-1}$. In fact, we can show in general that  $p_B(n)= p(n)$.


\begin{lemma}\label{lem:polynomials-same}
For any non-empty $S = \{i_1,i_2,\hdots,i_s\}$,  let $S_2 = \{i_1,i_2,\hdots,i_{s-1}, i_{s}-1\}$. Then $p_B(S,n) + p_B(S_2,n) = p(S,n) + p(S_2,n)$ implies $p_B(S,n) = p(S,n)$ for any $n$.
\end{lemma}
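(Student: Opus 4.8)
The plan is to exploit the common recursion of Corollary~\ref{cor:maincorollary}: since the hypothesis concerns the \emph{sum} $p_B(S,n)+p_B(S_2,n)$, and this sum is governed entirely by the truncated set $S_1=S\setminus\{i_s\}=\{i_1,\dots,i_{s-1}\}$, the first move is to rewrite everything in terms of $S_1$. Writing $m=i_s$ and applying the recursion in $B_n$ together with the identical recursion in $S_n$ (Theorem~3 of \cite{sagan}), one has
\[
p_B(S,n)+p_B(S_2,n)=p_B(S_1,m-1)\binom{n}{m-1}-2\,p_B(S_1,n),
\]
and the same identity with $p$ in place of $p_B$. Subtracting and setting $e(n)=p_B(S_1,n)-p(S_1,n)$, the hypothesis becomes the single polynomial identity
\[
e(m-1)\binom{n}{m-1}=2\,e(n)\qquad\text{for all }n.
\]

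First I would extract from this that $e\equiv 0$, i.e. $p_B(S_1,n)=p(S_1,n)$. This is a degree count: by Theorem~\ref{thm:maintheorem} and its analogue in \cite{sagan}, both $p_B(S_1,\cdot)$ and $p(S_1,\cdot)$ have degree $i_{s-1}-1$, so $\deg e\le i_{s-1}-1\le i_s-3<m-1$ when $s\ge 2$ (for $s=1$ one has $S_1=\emptyset$ and $p_B(\emptyset,\cdot)=p(\emptyset,\cdot)=1$ by Proposition~\ref{prop:nopeakssigned}, so $e\equiv 0$ outright). If $e(m-1)\ne 0$ the left side has degree $m-1$ while the right side has degree $\le i_{s-1}-1<m-1$, a contradiction; hence $e(m-1)=0$, and then $2e(n)\equiv 0$, so $e\equiv 0$.

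Once $p_B(S_1,\cdot)=p(S_1,\cdot)$ is in hand, I would propagate the sum identity down the entire chain of sets sharing the prefix $S_1$. For each integer $t$ set $T^{(t)}=S_1\cup\{t\}$, so $T^{(i_s)}=S$ and $T^{(i_s-1)}=S_2$. Applying the recursion to each admissible $T^{(t)}$ (whose truncation is again $S_1$) and using $p_B(S_1,\cdot)=p(S_1,\cdot)$ gives, for every $t$ with $i_{s-1}+2\le t\le i_s$,
\[
p_B(T^{(t)},n)+p_B(T^{(t-1)},n)=p(T^{(t)},n)+p(T^{(t-1)},n).
\]
Writing $\delta_t(n)=p_B(T^{(t)},n)-p(T^{(t)},n)$, these read $\delta_t+\delta_{t-1}=0$. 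The chain bottoms out at $t=i_{s-1}+1$ (read as $t=1$ when $s=1$), where $T^{(i_{s-1}+1)}=\{i_1,\dots,i_{s-1},i_{s-1}+1\}$ contains the consecutive pair $i_{s-1},i_{s-1}+1$ and so is not admissible, forcing $\delta_{i_{s-1}+1}\equiv 0$ (and for $s=1$ the chain ends at $\{1\}$, which is not admissible since $1$ cannot be a peak). Telescoping upward, $\delta_{i_{s-1}+1}=0$ gives $\delta_{i_{s-1}+2}=0$, and so on up to $\delta_{i_s}=p_B(S,\cdot)-p(S,\cdot)=0$, which is the claim.

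The main obstacle, and the reason the single hypothesis does not split directly, is that knowing only $\delta_{i_s}+\delta_{i_s-1}=0$ does not by itself force $\delta_{i_s}=0$; one must first recover the prefix identity $p_B(S_1,\cdot)=p(S_1,\cdot)$ and then regenerate the full descending chain of sum relations, so that the known vanishing at the non-admissible bottom can be telescoped all the way up. The only points needing care are the degree bookkeeping in the first step and the verification that the chain always terminates at a set that is inadmissible (equivalently, that admissibility fails precisely when $t$ reaches $i_{s-1}+1$).
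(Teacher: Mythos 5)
Your proof is correct, but it is not the paper's argument, and in fact it establishes something slightly stronger. The paper proves the lemma by induction along the very chain you describe (on $m$ in the case $S=\{m\}$, and on the gap $z=i_s-i_{s-1}$ when $s>1$), terminating at a non-admissible set where both polynomials vanish; however, at each step of that induction it needs the sum identity for the \emph{lower} sets in the chain, which the lemma's stated hypothesis (the identity for $S$ and $S_2$ alone) does not supply. This is harmless where the lemma is invoked, because inside the induction on $s$ in Theorem~\ref{thm:polynomials-same} the recursions (\ref{eqn:polynomial-recursion-Bn}) and (\ref{eqn:polynomial-recursion-Sn}) make the sum identity available at every level of the chain simultaneously; read in isolation, though, the paper's induction is incomplete. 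Your two extra moves close exactly this gap: subtracting the two recursions converts the single hypothesis into $e(m-1)\binom{n}{m-1}=2e(n)$ with $e=p_B(S_1,\cdot)-p(S_1,\cdot)$, and the degree count $\deg e\le i_{s-1}-1<m-1$ forces $e\equiv 0$; from $p_B(S_1,\cdot)=p(S_1,\cdot)$ you then regenerate the relations $\delta_t+\delta_{t-1}=0$ at every level before telescoping up from the non-admissible bottom. So the paper's route buys brevity in the context where it is used, while yours buys a self-contained proof of the lemma as literally stated. Two pieces of bookkeeping you should make explicit: first, the convention that $p_B(T,n)$ and $p(T,n)$ are identically zero for non-admissible $T$, which is what justifies both the vanishing of $\delta_{i_{s-1}+1}$ and the application of Corollary~\ref{cor:maincorollary} at $t=i_{s-1}+2$, where the set playing the role of $S_2$ in that recursion is non-admissible; second, your inequality $i_{s-1}-1\le i_s-3$ uses admissibility of $S$, so you should note that if $S$ itself has consecutive entries then both sides of the desired conclusion are identically zero and there is nothing to prove.
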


\begin{proof}
We assume that $p_B(S,n) + p_B(S_2,n) = p(S,n) + p(S_2,n)$ and consider two cases $s = 1$ and $s > 1$.

Case $s = 1$: Since $S$ has only one element, we can write $S = \{m\}$ and induct on $m$. The base case is $m = 1$, since we cannot have a peak at position $1$, $p_B(S,n) = p(S,n) = 0$. Now, we assume our claim is true for $S = \{m\}$, where $m\geq 1$. 
Consider $S = \{m+1\}$, our assumption gives us that $p_B( \{m+1\},n) + p_B(\{m\},n) = p( \{m+1\},n) + p(\{m\},n)$, and our inductive hypothesis gives us our result.

Case $s > 1$: In this case, we notice that if we write $i_s = i_{s-1}+z$, then $S = \{i_1, i_2, \hdots, i_{s-1}, \allowbreak i_{s-1}+z\}$ and $S_2 = \{i_1, i_2, \hdots, i_{s-1}, i_{s-1}+(z-1)\}$. Therefore, by induction on $z$ we can easily see that, assuming $p_B(S,n) + p_B(S_2,n) = p(S,n) + p(S_2,n)$ implies $p_B(S,n) = p(S,n)$. 
\end{proof}

\begin{theorem}\label{thm:polynomials-same}
For any $S = \{i_1,i_2,\hdots,i_s\}$ and for any $n$, if $\#P(S,n) = p(n)2^{n-s-1}$ and $\#P_B(S,n) = p_B(n)2^{2n-s-1}$, then $p(n) = p_B(n)$ is the same polynomial in $n$ depending on $S$ for both $S_n$ and $B_n$.
\end{theorem}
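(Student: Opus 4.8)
The plan is to prove the theorem by induction on the maximum element $m = \max S$ (equivalently, on the sum $i_1 + \cdots + i_s$), using Lemma \ref{lem:polynomials-same} to execute each inductive step. The driving observation, already recorded in the discussion preceding Corollary \ref{cor:maincorollary}, is that $p(S,n)$ and $p_B(S,n)$ obey the \emph{same} recursion; the whole argument is built around turning that coincidence into the equality of the two polynomials. For the base case I would take $S = \emptyset$, where $s = 0$: since $\#P(\emptyset,n) = 2^{n-1}$ and $\#P_B(\emptyset,n) = 2^{2n-1}$ by Proposition \ref{prop:nopeakssigned}, the respective normalizations $2^{n-1}$ and $2^{2n-1}$ force $p(\emptyset,n) = p_B(\emptyset,n) = 1$, so the polynomials agree.

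For the inductive step, let $S \neq \emptyset$ with $m = \max S$, and set $S_1 = S - \{m\}$ and $S_2 = S_1 \cup \{m-1\}$ as in Corollary \ref{cor:maincorollary}. First I would run the partition argument of Theorem \ref{thm:maintheorem} in $S_n$ and confirm that it produces the identical polynomial recursion $p(S,n) = p_1(m-1)\binom{n}{m-1} - 2p_1(n) - p_2(n)$ with $p_i = p(S_i,\cdot)$. Writing $p_1^B = p_B(S_1,\cdot)$ and $p_1 = p(S_1,\cdot)$ and rearranging the two recursions gives
\[
p_B(S,n) + p_B(S_2,n) = p_1^B(m-1)\binom{n}{m-1} - 2\,p_1^B(n),
\]
\[
p(S,n) + p(S_2,n) = p_1(m-1)\binom{n}{m-1} - 2\,p_1(n),
\]
whose right-hand sides depend only on $S_1$. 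Since $\max S_1 < m$, the inductive hypothesis gives $p_1^B = p_1$, so the two right-hand sides coincide and therefore $p_B(S,n) + p_B(S_2,n) = p(S,n) + p(S_2,n)$. This is precisely the hypothesis of Lemma \ref{lem:polynomials-same}, which then yields $p_B(S,n) = p(S,n)$ and closes the induction. Note that this step invokes only the $S_1$-hypothesis to set up the sum relation, delegating the disentangling of $S$ from $S_2$ entirely to the lemma.

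I expect the genuine obstacle to be the bookkeeping that certifies the two recursions are literally the \emph{same} polynomial recursion rather than merely analogous. One must carry every power of two through the $B_n$ derivation, in particular the no-peak blocks contributing $2^{2(n-k)-1}$ in $B_n$ against $2^{(n-k)-1}$ in $S_n$, and verify that these discrepancies cancel exactly against the differing normalizations $2^{2n-s-1}$ and $2^{n-s-1}$, so that after dividing out one is left with an identical identity in $n$. A secondary point needing care is admissibility of $S_2$: when $m-1 = i_{s-1}+1$ the set $S_2$ contains consecutive integers and is non-admissible, so $p_B(S_2) = p(S_2) = 0$, in which case the sum relation collapses directly to $p_B(S,n) = p(S,n)$; this is exactly the situation handled by the $m=1$ base case of Lemma \ref{lem:polynomials-same} and causes no trouble.
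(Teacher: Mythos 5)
Your proof is correct and takes essentially the same route as the paper: both arguments match the recursion of Corollary \ref{cor:maincorollary} against its $S_n$ analogue from \cite{sagan} to obtain $p_B(S,n)+p_B(S_2,n)=p(S,n)+p(S_2,n)$, and then invoke Lemma \ref{lem:polynomials-same} to disentangle $p_B(S)$ from $p_B(S_2)$. The only difference is bookkeeping: the paper inducts on $s=\#S$ with the sum relation as the inducted statement (recovering $p_B(S_1,n)=p(S_1,n)$ by a second appeal to the lemma), whereas you induct on $\max S$ with the equality itself as the inducted statement, which yields $p_B(S_1,n)=p(S_1,n)$ directly.
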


\begin{proof}

First, let $S_1 = S - \{i_s\}$ and $S_2 = S_1 \cup \{i_s-1\}$. By Corollary \ref{cor:maincorollary} we can write 
\begin{equation}\label{eqn:polynomial-recursion-Bn}
p_B(S,n) + p_B(S_2,n) = {n \choose i_s-1}p_B(S_1,i_s-1)-2p_B(S_1,n).
\end{equation}
By \cite{sagan}, we can also write 
\begin{equation}\label{eqn:polynomial-recursion-Sn}
p(S,n) + p(S_2,n) = {n \choose i_s-1}p(S_1,i_s-1)-2p(S_1,n).
\end{equation} 

By Lemma \ref{lem:polynomials-same} it is enough to show that the recursive formulas for the polynomials for $S$ of size $s$ are equal, i.e. $p_B(S,n) + p_B(S_2,n) = p(S,n) + p(S_2,n)$. We show this by inducting on $s$.

The base case is $s = 0$. Here, $S_1 = \emptyset$, then Equation (\ref{eqn:polynomial-recursion-Bn}) and Equation (\ref{eqn:polynomial-recursion-Sn}) are in terms of the polynomials for $S = \emptyset$. By  Proposition \ref{prop:nopeakssigned}, we have that $p_B(\emptyset,n) = 1$, and the same is true for $p(\emptyset,n)$ \cite{sagan}. Therefore $p_B(S,n) + p_B(S_2,n) = p(S,n) + p(S_2,n)$ in the base case.

Now, we assume that our claim holds for $s-1\geq 0$. Then if we let $S_3 = (S_1 - \{i_{s-1}\}) \cup \{i_{s-1}-1\}$, our inductive hypothesis gives us that $p_B(S_1,n) + p_B(S_3,n) = p(S_1,n) + p(S_3,n)$. By Lemma \ref{lem:polynomials-same} this hypothesis implies that $p_B(S_1,n) = p(S_1,n)$. By the same argument, since our inductive hypothesis applies for any $n$, $p_B(S_1,i_s-1) = p(S_1,i_s-1)$. Therefore Equation (\ref{eqn:polynomial-recursion-Bn}) and Equation (\ref{eqn:polynomial-recursion-Sn}) will give us our claim for any $s$.

\end{proof}

In \cite{sagan}, the polynomials $p(n)$ have been computed for several special cases of $S$ using Corollary \ref{cor:maincorollary}. Hence using Theorem  \ref{thm:polynomials-same} we obtain the same polynomials for $B_n$ given in the following corollaries. 


\begin{corollary}\label{thm:m}
If $S = \left\{m\right\}$ is admissible then \[ p_B(S,n) = {n-1 \choose m-1} - 1 .\]
\end{corollary}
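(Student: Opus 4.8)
The plan is to apply the recursion of Corollary~\ref{cor:maincorollary} directly to the singleton $S = \{m\}$ and then induct on $m$, collapsing the resulting binomial expression with Pascal's identity. First I would specialize the recursion: for $S = \{m\}$ we have $S_1 = S - \{m\} = \emptyset$ and $S_2 = S_1 \cup \{m-1\} = \{m-1\}$, so that $p_1(n) = p_B(\emptyset,n)$ and $p_2(n) = p_B(\{m-1\},n)$. The base case of Theorem~\ref{thm:polynomials-same} (together with Proposition~\ref{prop:nopeakssigned}) gives $p_B(\emptyset,n) = 1$ for every $n$, so in particular $p_1(m-1) = p_1(n) = 1$ and Corollary~\ref{cor:maincorollary} reduces to
\[
p_B(\{m\},n) = \binom{n}{m-1} - 2 - p_B(\{m-1\},n).
\]

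Next I would run the induction on $m$. For the base case I take $m = 1$: since no permutation can have a peak at position $1$, the set $\{1\}$ is non-admissible and $p_B(\{1\},n) = 0$, which matches the claimed value $\binom{n-1}{0} - 1 = 0$. (Equivalently one may check $m=2$ directly, where the reduced recursion gives $p_B(\{2\},n) = n - 2 = \binom{n-1}{1} - 1$.) For the inductive step I would assume $p_B(\{m-1\},n) = \binom{n-1}{m-2} - 1$ and substitute into the reduced recursion to get
\[
p_B(\{m\},n) = \binom{n}{m-1} - 2 - \left( \binom{n-1}{m-2} - 1 \right) = \binom{n}{m-1} - \binom{n-1}{m-2} - 1.
\]
Applying Pascal's identity $\binom{n}{m-1} = \binom{n-1}{m-1} + \binom{n-1}{m-2}$ then collapses this to $p_B(\{m\},n) = \binom{n-1}{m-1} - 1$, completing the induction.

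I expect no serious obstacle here, since all the genuine work has already been invested in establishing the recursion of Corollary~\ref{cor:maincorollary} and the value $p_B(\emptyset,n) = 1$; the remaining argument is essentially a routine telescoping via Pascal's rule. The only points demanding a little care are specializing the recursion with the correct $S_1$ and $S_2$, and handling the convention at $m = 1$ so that the non-admissible boundary case stays consistent with the formula. As an alternative I note that Theorem~\ref{thm:polynomials-same} already identifies $p_B(S,n)$ with the type-$A$ polynomial $p(S,n)$, so one could instead simply cite the computation of $p(\{m\},n)$ from \cite{sagan}; I nonetheless prefer the self-contained inductive derivation above because it exhibits the recursion of Corollary~\ref{cor:maincorollary} in action.
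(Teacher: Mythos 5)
Your proof is correct, but it takes a different route from the paper. The paper obtains this corollary with no new computation at all: it invokes Theorem~\ref{thm:polynomials-same}, which identifies $p_B(S,n)$ with the type-$A$ polynomial $p(S,n)$, and then cites the formula $p(\{m\},n)=\binom{n-1}{m-1}-1$ already computed in \cite{sagan} --- i.e., exactly the ``alternative'' you mention and set aside at the end of your proposal. Your self-contained induction on $m$ via Corollary~\ref{cor:maincorollary} and Pascal's identity is a valid replacement and has the virtue of not outsourcing the type-$A$ computation to \cite{sagan}; it also exhibits the recursion concretely, whereas the paper's route emphasizes the structural fact $p_B=p$ and gets all three corollaries in that section for free at once. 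One point in your argument deserves an explicit word: when $m=2$ the set $S_2=\{1\}$ is not admissible, so Corollary~\ref{cor:maincorollary} is being applied with a non-admissible $S_2$. This is harmless --- in the partition $\#\Pi = \#P_B(S_2,n)+\#P_B(S,n)+\#P_B(S_1,n)$ from the proof of Theorem~\ref{thm:maintheorem}, the block corresponding to $S_2$ is simply empty, so $\#P_B(\{1\},n)=0$ and the convention $p_B(\{1\},n)=0$ is consistent with the recursion --- but since the paper states the corollary only for admissible $S$, you should note this justification rather than treat $p_B(\{1\},n)=0$ as automatic; alternatively, start the induction at $m=2$ as you indicate parenthetically, which avoids the issue for the base case but still requires the same observation when the recursion for $m=3$ calls on $S_2=\{2\}$ (admissible, so in fact only the $m=2$ step ever touches a non-admissible set).
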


\begin{corollary}\label{thm:2-m}
If $S = \left\{2,m\right\}$ is admissible then \[ p_B(S,n) = (m-3){n-2 \choose m-1} + (m-2){n-2 \choose m-2} -  {n-2 \choose 1}.\]
\end{corollary}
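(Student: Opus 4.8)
The plan is to apply the recursion of Corollary \ref{cor:maincorollary} directly and induct on $m$. For $S=\{2,m\}$ we have $\max S = m$, so $S_1 = S-\{m\} = \{2\}$ and $S_2 = S_1\cup\{m-1\} = \{2,m-1\}$; writing $p_1(n)=p_B(\{2\},n)$ and $p_2(n)=p_B(\{2,m-1\},n)$, the corollary gives
\[
p_B(\{2,m\},n)=p_1(m-1)\binom{n}{m-1}-2p_1(n)-p_2(n).
\]
By Corollary \ref{thm:m}, $p_1(n)=\binom{n-1}{1}-1=n-2$, so $p_1(m-1)=m-3$ and $-2p_1(n)=-2(n-2)$. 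The only remaining unknown is $p_2(n)$, which is the $\{2,m-1\}$-version of the very quantity we are computing; this is precisely what makes induction on $m$ natural.

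For the base case I would take $m=4$, the smallest value for which $\{2,m\}$ is admissible. Here $S_2=\{2,3\}$ contains consecutive integers and so is not admissible, forcing $p_2(n)=p_B(\{2,3\},n)=0$. Thus $p_B(\{2,4\},n)=\binom{n}{3}-2(n-2)$, and I would convert this into the claimed form using the Vandermonde-type identity
\[
\binom{n}{m-1}=\binom{n-2}{m-1}+2\binom{n-2}{m-2}+\binom{n-2}{m-3},
\]
which at $m=4$ reads $\binom{n}{3}=\binom{n-2}{3}+2\binom{n-2}{2}+\binom{n-2}{1}$ and yields $\binom{n-2}{3}+2\binom{n-2}{2}-\binom{n-2}{1}$, matching the formula at $m=4$.

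For the inductive step, assume the formula at $m-1$, namely
\[
p_2(n)=(m-4)\binom{n-2}{m-2}+(m-3)\binom{n-2}{m-3}-\binom{n-2}{1}.
\]
Substituting this together with $p_1(m-1)=m-3$ and $-2p_1(n)=-2(n-2)$ into the recursion and expanding $(m-3)\binom{n}{m-1}$ via the displayed identity, the $(m-3)\binom{n-2}{m-3}$ contributions cancel and the coefficient of $\binom{n-2}{m-2}$ collapses to $2(m-3)-(m-4)=m-2$, leaving
\[
p_B(\{2,m\},n)=(m-3)\binom{n-2}{m-1}+(m-2)\binom{n-2}{m-2}-\binom{n-2}{1},
\]
as desired. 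The only real work is this binomial bookkeeping, so the main obstacle is simply organizing the Vandermonde expansion so the lower-order terms cancel correctly; there is no conceptual difficulty once the recursion and the value $p_1(n)=n-2$ are in hand. Alternatively, one could bypass the computation by invoking Theorem \ref{thm:polynomials-same} to identify $p_B(S,n)$ with the polynomial $p(S,n)$ already computed in \cite{sagan}.
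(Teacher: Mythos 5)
Your proof is correct, but it takes a genuinely different route from the paper's. The paper does no computation for this corollary at all: it invokes Theorem \ref{thm:polynomials-same}, which identifies $p_B(S,n)$ with the type-A polynomial $p(S,n)$, and then simply imports the formula for $S=\{2,m\}$ already computed in \cite{sagan} --- exactly the alternative you mention in your closing sentence. Your main argument instead carries out the computation inside $B_n$: you run the recursion of Corollary \ref{cor:maincorollary} with $S_1=\{2\}$ and $S_2=\{2,m-1\}$, feed in $p_B(\{2\},n)=n-2$ from Corollary \ref{thm:m}, anchor the induction at $m=4$ (where $S_2=\{2,3\}$ is inadmissible, so $p_2(n)=0$; this use of the recursion is warranted, since the underlying partition argument gives $\#P_B(S_2,n)=0$ in that case, a convention the paper itself uses elsewhere), and close the inductive step with the expansion $\binom{n}{m-1}=\binom{n-2}{m-1}+2\binom{n-2}{m-2}+\binom{n-2}{m-3}$. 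Your bookkeeping checks out: the $(m-3)\binom{n-2}{m-3}$ terms cancel, the middle coefficient is $2(m-3)-(m-4)=m-2$, and the linear terms combine as $-2(n-2)+(n-2)=-\binom{n-2}{1}$. What your approach buys is a self-contained verification of the formula within the $B_n$ machinery, independent of the type-A computation in \cite{sagan} (though not fully independent of the transfer theorem, since the paper's own justification of Corollary \ref{thm:m} is also by transfer); what the paper's approach buys is brevity and uniformity --- once Theorem \ref{thm:polynomials-same} is in place, all the special-case polynomials follow at once with no further algebra.
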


\begin{corollary}
If $ S = \left\{2,m,m+2\right\}$ is admissible then
\[  p_B(S,n) = m(m-3){n \choose m+1} -2(m-3){n-2 \choose m-1} -2(m-2){n-2 \choose m-2} +2{n-2 \choose 1 }. \]
\end{corollary}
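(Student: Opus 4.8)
The plan is to derive this formula directly from the recursive relation in Corollary \ref{cor:maincorollary}, exactly as the two preceding special cases were obtained. Write $M = \max S = m+2$, so that $S_1 = S \setminus \{M\} = \{2,m\}$ and $S_2 = S_1 \cup \{M-1\} = \{2,m,m+1\}$. The first observation I would make is the one that makes this three-element case collapse: since $S = \{2,m,m+2\}$ is admissible it contains no two consecutive integers, forcing $m \geq 4$, and hence $S_2 = \{2,m,m+1\}$ contains the consecutive pair $m, m+1$. Therefore $S_2$ is not admissible, $\#P_B(S_2,n) = 0$, and consequently $p_2(n) = p_B(S_2,n) = 0$. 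I would state this carefully, as it is the conceptual heart of the argument.

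With $p_2(n)$ eliminated, Corollary \ref{cor:maincorollary} reduces to
\[ p_B(S,n) = p_1(M-1)\binom{n}{M-1} - 2p_1(n), \]
where $p_1(n) = p_B(\{2,m\},n)$ is already known from Corollary \ref{thm:2-m}, namely
\[ p_1(n) = (m-3)\binom{n-2}{m-1} + (m-2)\binom{n-2}{m-2} - \binom{n-2}{1}. \]
Distributing the factor $2$ in the term $-2p_1(n)$ then produces precisely the last three summands of the claimed formula, so that part requires no further work beyond substitution.

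The remaining task is to evaluate the leading coefficient $p_1(M-1) = p_1(m+1)$. Setting $n = m+1$ in the expression for $p_1$ gives
\[ p_1(m+1) = (m-3)\binom{m-1}{m-1} + (m-2)\binom{m-1}{m-2} - \binom{m-1}{1} = (m-3) + (m-2)(m-1) - (m-1), \]
which I would factor as $(m-3) + (m-1)(m-3) = m(m-3)$, yielding the leading term $m(m-3)\binom{n}{m+1}$ and completing the identification with the stated polynomial. I expect no genuine obstacle here: the only place demanding any care is the binomial simplification at $n = m+1$, and the rest is a direct substitution into the recursion together with the admissibility observation that kills $p_2$.
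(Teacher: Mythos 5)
Your proof is correct, but it takes a different route from the paper. The paper does not compute this polynomial within $B_n$ at all: it invokes Theorem \ref{thm:polynomials-same} (that $p_B(S,n) = p(S,n)$) and simply imports the formula that Billey, Burdzy and Sagan computed for the symmetric group in \cite{sagan}. You instead run the $B_n$ recursion of Corollary \ref{cor:maincorollary} directly, with the key observation that $S_2 = \{2,m,m+1\}$ is non-admissible (since admissibility of $S$ forces $m \geq 4$ and $S_2$ contains the consecutive pair $m, m+1$), so $\#P_B(S_2,n) = 0$ and $p_2 \equiv 0$; the rest is substitution of Corollary \ref{thm:2-m} and the evaluation $p_1(m+1) = (m-3) + (m-2)(m-1) - (m-1) = m(m-3)$, which checks out. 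What your approach buys is a self-contained verification inside the paper's own framework rather than an appeal to the external computation; what the paper's approach buys is brevity and the reuse of its equality theorem, which settles all such special cases at once. Two small caveats: first, Corollary \ref{cor:maincorollary} is stated with $p_2(n) = p_B(S_2,n)$ for $S_2$ possibly non-admissible, so you are implicitly adopting the convention that the polynomial attached to a non-admissible set is identically zero — this is justified by the partition argument in the proof of Theorem \ref{thm:maintheorem} (the $S_2$ block of $\Pi$ is simply empty), and the paper itself uses the same convention elsewhere (e.g.\ Proposition \ref{prop:case-1,m,m+2}), but it deserves the explicit sentence you gave it; second, your input Corollary \ref{thm:2-m} is itself established in the paper only via the transfer from \cite{sagan}, so your argument is self-contained only relative to that earlier corollary, not absolutely.
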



\section{Permutations in $B_n$ with $\pi_0 = 0$}\label{B0n}

Recall that a peak is defined such that the permutation $\pi \in B_n$ has a peak at position $i$ if $\pi_{i-1} < \pi_{i} > \pi_{i+1}$. Therefore if we introduce the assumption that $\pi_0 = 0$ for all $\pi \in B_n$, then a permutation $\pi$ can have a peak at position $1$ if $0 < \pi_1 > \pi_2$, that is if $\pi_1$ is positive and $\pi$ has a descent at 1 (i.e. $\pi_1>\pi_2$). We define $\widehat{P}_B(S,n)$ to be the set of all permutations in $B_n$ with peak set $S$ with the assumption that $\pi_0 = 0$. 
The number of $n$-admissible sets is also given by the Fibonacci sequence.  

\begin{proposition}\label{thm-nadmissible}
Let $A_n$ be the set of $n$-admissible peak sets $S$. Then the size of $A_n$ is given by the $(n+1)$th Fibonacci number. 
\end{proposition}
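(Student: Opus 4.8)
The plan is to follow the proof of Proposition~\ref{thm-nadmissible0} essentially verbatim, changing only the underlying index set to reflect that, because $\pi_0 = 0$, position $1$ may now be a peak. Throughout, write $F_k$ for the $k$-th Fibonacci number, normalized by $F_1 = F_2 = 1$.

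First I would nail down the characterization of $n$-admissibility in this setting. The argument that a peak set contains no two consecutive integers is unchanged (positions $i$ and $i+1$ cannot both be peaks, since that would force $\pi_i > \pi_{i+1}$ and $\pi_i < \pi_{i+1}$ at once), and position $n$ still cannot be a peak because $\pi_{n+1}$ does not exist. The one new phenomenon is that $0 < \pi_1 > \pi_2$ is now possible, so position $1$ is permitted. Hence the $n$-admissible sets are exactly the subsets of $\{1, 2, \ldots, n-1\}$ with no two consecutive elements; that each such set is realized by some $\pi \in B_n$ with $\pi_0 = 0$ is routine (build $\pi$ so that the prescribed positions are local maxima and the intervening runs are monotone), so $\#\widehat{P}_B(S,n) \neq 0$ precisely for these $S$.

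Next I would run the recursion. The base cases are $A_1 = \{\emptyset\}$ and $A_2 = \{\emptyset, \{1\}\}$, giving $|A_1| = 1 = F_2$ and $|A_2| = 2 = F_3$. For $n \geq 3$, split $A_n$ into the admissible sets $A_\alpha$ not containing $n-1$ and the admissible sets $A_\beta$ containing $n-1$. Since the admissible sets are the no-two-consecutive subsets of $\{1, \ldots, n-1\}$, the collection $A_\alpha$ is exactly the collection of such subsets of $\{1, \ldots, n-2\}$, which is $A_{n-1}$; and a member of $A_\beta$ cannot contain $n-2$, so deleting $n-1$ gives a bijection from $A_\beta$ onto the no-two-consecutive subsets of $\{1, \ldots, n-3\}$, namely $A_{n-2}$. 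Thus $|A_n| = |A_{n-1}| + |A_{n-2}|$, and together with the base cases this yields $|A_n| = F_{n+1}$ by induction.

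There is no serious obstacle here; the only point requiring care is the index bookkeeping. Allowing a peak at position $1$ widens the admissible range from $\{2, \ldots, n-1\}$ (Proposition~\ref{thm-nadmissible0}) to $\{1, \ldots, n-1\}$, which shifts the answer from $F_n$ to $F_{n+1}$. As a cross-check I would observe that $S \mapsto \{\, s+1 : s \in S \,\}$ is a bijection from the $n$-admissible sets of this section onto the $(n+1)$-admissible sets of Section~\ref{Bn}, so the count equals $|A_{n+1}|$ in the sense of Proposition~\ref{thm-nadmissible0}, which is indeed $F_{n+1}$.
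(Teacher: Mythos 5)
Your proof is correct and is essentially the paper's own argument: the paper proves this proposition by citing the recursion of Proposition~\ref{thm-nadmissible0} (splitting $A_n$ according to whether $n-1$ lies in the set) with the new base cases $A_1 = \{\emptyset\}$ and $A_2 = \{\emptyset,\{1\}\}$, which is exactly what you write out in full. Your added details (the characterization of admissible sets as no-two-consecutive subsets of $\{1,\ldots,n-1\}$ and the shift bijection cross-check) are sound but go beyond what the paper records.
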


\begin{proof} The proof is exactly the same as for Proposition \ref{thm-nadmissible0} with initial values  $A_1 = \{\emptyset\}$ and $A_2 = \{\emptyset,\{1\}\}$.
\end{proof}

The next theorem shows that similar recursion holds for computing values for $\#\widehat{P}_B(S,n)$.  We then proceed to compute special cases for various $n$-admissible peak sets $S$.


\begin{theorem} \label{thm:SaganrelationBn0} 
Let $S =\left\{   i_1,i_2,\ldots,  i_s  \right\}$ be an admissible set then,
\[  \#\widehat{P}_B(S,n)={n \choose i_s-1} \#\widehat{P}_B(S_1,i_s-1)2^{2(n-i_s)+1} - \#\widehat{P}_B(S_1,n) - \#\widehat{P}_B(S_2,n).  \]
where $S_1=S-\{i_s\}$ and $S_2=S_1\cup \{i_s-1\}$. 
\end{theorem}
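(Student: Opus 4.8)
The plan is to adapt the partition-and-double-count argument from the proof of Theorem~\ref{thm:maintheorem}, accounting for the one structural change: because $\pi_0 = 0$, the block carrying this zero prefix may now have a peak at position $1$, so it must be enumerated with $\widehat{P}_B$ rather than $P_B$. Set $k = i_s - 1$ and, for $n \ge i_s$, let $\Pi$ be the set of signed permutations $\pi = \pi_1 \pi_2 \cdots \pi_n$ (with the convention $\pi_0 = 0$) such that the word $0\,\pi_1 \cdots \pi_k$ has peak set $S_1 = S - \{i_s\}$, while the tail word $\pi_{i_s}\cdots \pi_n$, read \emph{without} a zero in front, has peak set $\emptyset$. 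I would then compute $\#\Pi$ in two different ways.

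For the first count I would partition $\Pi$ according to the peak behavior of $\pi$ at the two boundary positions $k$ and $i_s = k+1$. Every element of $S_1$ is $\le i_{s-1} \le i_s - 2 = k-1$ (since $S$ contains no consecutive integers), so the peaks of $S_1$ lie strictly below $k$ and are already fixed by the front block; likewise every position $\ge i_s + 1$ is a non-peak by the condition on the tail. Hence the only freedom is at $k$ and at $i_s$, and a peak at $k$ (which requires $\pi_k > \pi_{i_s}$) and a peak at $i_s$ (which requires $\pi_k < \pi_{i_s}$) cannot occur together. This gives exactly three disjoint, exhaustive blocks, so that
\[ \#\Pi = \#\widehat{P}_B(S_2,n) + \#\widehat{P}_B(S,n) + \#\widehat{P}_B(S_1,n), \]
with $S_2 = S_1 \cup \{i_s-1\}$.

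For the second count I would build $\pi \in \Pi$ directly. First choose the $k$ absolute values for the front block among the $n$ available (one cannot use both $m$ and $-m$), giving $\binom{n}{k}$ choices; a sign-preserving standardization sending positives to positives and negatives to negatives shows that arranging these $k$ letters, with $\pi_0 = 0$ in front and peak set $S_1$, can be done in $\#\widehat{P}_B(S_1,k)$ ways, since such a standardization preserves all order relations and, crucially, every comparison against $0$. The remaining $n-k$ letters must then form a no-peak word with no zero prefix, which by Proposition~\ref{prop:nopeakssigned} can be done in $\#P_B(\emptyset, n-k) = 2^{2(n-k)-1} = 2^{2(n-i_s)+1}$ ways. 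Multiplying gives $\#\Pi = \binom{n}{i_s-1}\,\#\widehat{P}_B(S_1, i_s-1)\,2^{2(n-i_s)+1}$; equating the two expressions for $\#\Pi$ and solving for $\#\widehat{P}_B(S,n)$ yields the claimed recursion.

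The main obstacle, and the place I would argue most carefully, is keeping the asymmetry between the two blocks straight: the front block inherits the $\pi_0 = 0$ convention and is therefore counted by $\#\widehat{P}_B(S_1, k)$ (allowing a position-$1$ peak), whereas the tail block stands alone and is counted by the ordinary $\#P_B(\emptyset, n-k)$ from Proposition~\ref{prop:nopeakssigned}. The only genuinely new verification relative to Theorem~\ref{thm:maintheorem} is that the standardization of the front block respects sign, so that a peak at position $1$ (which depends on $\pi_1 > 0$) is preserved; once this is checked, the exponent bookkeeping $2(n-k)-1 = 2(n-i_s)+1$ and the degenerate cases (for instance, $S_2$ non-admissible, in which case its term simply vanishes) follow routinely.
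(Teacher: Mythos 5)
Your proposal is correct and follows essentially the same route as the paper, whose proof simply states that the recursion comes from the construction of $\Pi$ in Theorem \ref{thm:maintheorem} and omits all details. You have in fact supplied exactly the details the paper leaves out --- the asymmetric counting of the front block by $\#\widehat{P}_B(S_1,k)$ versus the tail by $\#P_B(\emptyset,n-k)$, and the sign-preserving standardization needed so that peaks at position $1$ survive relabeling --- so your write-up is a faithful (and more complete) version of the intended argument.
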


\begin{proof}
This equation is based on the construction given in the proof of Theorem \ref{thm:maintheorem}.  We omit the details as they are exactly the same as in that proof. 
\end{proof}


In the following theorem we show that $\#\widehat{P}(\emptyset,n)$ is given by sequence A007051 in the OEIS.  The first few values of this sequence are $1,Œ2, 5, 14, 41, 122, 365, \ldots.$ 

\begin{theorem}\label{thm:Bn0emptyset}

Let $S = \emptyset$. Then \[ \#\widehat{P}_B(S,n) = \frac{3^n+1}{2}.\]

\end{theorem}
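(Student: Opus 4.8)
The plan is to give a direct combinatorial count, since the recursion of Theorem \ref{thm:SaganrelationBn0} requires a nonempty peak set and so does not apply when $S=\emptyset$. First I would translate the peak-free condition into a statement about the ascent/descent pattern of the augmented word $0=\pi_0,\pi_1,\ldots,\pi_n$. A peak at position $i$ is exactly an ascent (a step $\pi_{i-1}<\pi_i$) immediately followed by a descent (a step $\pi_i>\pi_{i+1}$); hence $\pi\in\widehat{P}_B(\emptyset,n)$ if and only if the pattern of ascents and descents of $0,\pi_1,\ldots,\pi_n$ never has an ascent followed by a descent. Such a binary pattern must be a (possibly empty) block of descents followed by a (possibly empty) block of ascents, so the peak-free signed permutations are exactly the ``valley-shaped'' words $0>\pi_1>\cdots>\pi_m<\pi_{m+1}<\cdots<\pi_n$ for some valley position $m$.

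Next I would read off the structure forced by starting at $0$. Because $\pi_1<\pi_0=0$ whenever $m\ge 1$, the entire decreasing run $\pi_1>\cdots>\pi_m$ consists of negative entries, and its last entry $\pi_m$ is the global minimum of $\{\pi_1,\ldots,\pi_n\}$. The increasing run $\pi_{m+1}<\cdots<\pi_n$ consists of all remaining entries, which may be negative or positive. Conversely, once one fixes the valley to be the global minimum, any choice of which remaining negative entries go to the decreasing run (placed in decreasing order before the valley) and which go to the increasing run (placed in increasing order after it) produces a distinct valley-shaped word, while every positive entry is forced onto the increasing run. This gives a bijection between valley-shaped words and the data of a sign pattern together with a choice, for each non-minimal negative entry, of ``before'' or ``after'' the valley.

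I would then count by the number $j$ of negative entries. For a fixed $j\ge 1$ there are $\binom{n}{j}$ ways to choose which magnitudes are negative; the most negative value is forced to be the valley, and each of the remaining $j-1$ negative entries is assigned independently to the decreasing or increasing run, giving $2^{\,j-1}$ words per sign pattern. The case $j=0$ (all entries positive) yields the single strictly increasing word $0,1,2,\ldots,n$. Summing,
\[
\#\widehat{P}_B(\emptyset,n)=1+\sum_{j=1}^{n}\binom{n}{j}2^{\,j-1}
=1+\tfrac12\!\left(\sum_{j=0}^{n}\binom{n}{j}2^{\,j}-1\right)
=1+\frac{3^{n}-1}{2}=\frac{3^{n}+1}{2},
\]
where the middle equality is the binomial theorem applied to $(1+2)^n=3^n$. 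The main obstacle is the second step: one must argue carefully that fixing the valley to be the global minimum makes the correspondence a genuine bijection with no double counting (a non-minimal negative entry placed just after the valley must not be confused with part of the descent), and that the degenerate $j=0$ case, where the formula $2^{\,j-1}$ would read $2^{-1}$, is handled separately. Once the bijection is nailed down the enumeration and binomial summation are routine, and the small checks $n=1\mapsto 2$ and $n=2\mapsto 5$ confirm the formula.
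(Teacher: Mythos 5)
Your proof is correct, and it rests on the same key structural insight as the paper's: a signed permutation with $\pi_0=0$ and no peaks must be ``valley-shaped,'' i.e.\ a decreasing run of negative entries, then the global minimum together with an increasing run of the remaining negatives, then the positive entries in increasing order (the paper's sections $A$, $B$, $C$ in its Figure 1). The two arguments diverge only in how they finish the count. The paper converts this structure into an explicit bijection between $\widehat{P}_B(\emptyset,n)$ and the set partitions of $[n+1]$ into at most three blocks, sending $\pi=\pi_0\pi_A\pi_B\pi_C$ to $\left\{A,B,C\cup\{n+1\}\right\}$ in absolute value, and then quotes the identity $S(n+1,1)+S(n+1,2)+S(n+1,3)=(3^n+1)/2$ for Stirling numbers of the second kind. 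You instead enumerate the same data directly: choose the $j$ negative magnitudes in $\binom{n}{j}$ ways, observe that the most negative entry is forced to be the valley, assign each of the other $j-1$ negatives independently to one side of it, and handle $j=0$ separately, giving $1+\sum_{j=1}^{n}\binom{n}{j}2^{j-1}=(3^n+1)/2$ by the binomial theorem. Your version is more elementary and self-contained, needing nothing beyond the binomial theorem; the paper's bijection buys a structural explanation of why the answer is a partial sum of Stirling numbers (sequence A007051), which is how the paper frames the result. Your bookkeeping on the delicate points --- the minimum forced to be the valley, positives forced after it, no double counting between the two sides, and the degenerate all-positive case --- is exactly what is needed, so there is no gap.
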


\begin{proof}

Let $\pi \in \widehat{P}_B(\emptyset,n)$. A general shape for $\pi$ is given by Figure 1, where the section labeled A is negative and decreasing, the section labeled B is negative and increasing, and the section labeled C is positive and increasing.  According to these sections, we can partition $\pi$ into sections $\pi = \pi_0 \pi_A\pi_B\pi_C$. In general, up to two of these sections can be empty.  We also assume that $\pi_B$ contains the entire section of the permutation that is negative and ascending, including the minimum of $\pi$.  For example,  if $\pi = 0 - 4 -5 -6 -2 -1  \ 3$, then $\pi_A = -4 -5$, $\pi_B= -6 -2 -1$ and $\pi_C=3$.  

\begin{figure}
\label{figure1}

\begin{tikzpicture}
\draw [ultra thick] (0,0) -- (1,-1);
\draw [ultra thick] (1,-1) -- (3,1);
\draw (0,0) -- (3,0);
\draw (1,-1) -- (1,1);
\draw (2,-1) -- (2,1);
\node at (.5,-1.25) {A};
\node at (1.5,-1.25) {B};
\node at (2.5,-1.25) {C};
\end{tikzpicture}

\caption{General shape for $\pi \in \widehat{P}_B(\emptyset,n)$.}

\end{figure}
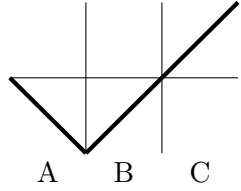

Now, define a function $f$ from $B_n$ to the set of partitions of $[n+1]$ into at most 3 blocks. Let $\pi \in B_n$. If $\pi = \pi_0\pi_A\pi_B\pi_C$, then we let $A$, $B$, and $C$ be the subsets of $[n]$ that correspond to the absolute values of the sections $\pi_A$, $\pi_B$, and $\pi_C$, respectively. Then $f$ maps $\pi$ to the partition of $[n+1]$ into at most 3 blocks, given by $\left\{A,B,C \cup \left\{n+1\right\}\right\}$, where if a section is empty it is not represented in the partition. Then $f(\pi)$ is in the set of partitions of $[n+1]$ into at most 3 blocks.

Next, we define the inverse of $f$ from the set of partitions of $[n+1]$ into at most 3 blocks to $B_n$. Let $P$ be such a partition. We write $P$ as a set of three blocks, where we allow some of the blocks to be empty. I.e., if $P = \left\{P_1,P_2\right\}$, we write $P = \left\{P_1,P_2,\emptyset\right\}$. If $P_j$ is the block containing $n+1$, then we let $C = P_j - \left\{n+1\right\}$. If $P_i$ is the block containing the maximum value of the remaining two blocks, then we let $B = P_i$, and we let $A$ be the remaining block. Hence,  $P$ maps to the signed permutation $\pi = \pi_0\pi_A\pi_B\pi_C$, such that $\pi_A$ is given by negating the elements of $A$ and ordering them so they are decreasing, $\pi_B$ is given by negating the elements of $B$ and ordering them so they are increasing, and $\pi_C$ is given by ordering the elements of $C$ so they are increasing.

It is known that the size of the set of partitions of $[n+1]$ into at most $3$ blocks is given by the first three Stirling numbers of the second kind, $S(n+1,1) + S(n+1,2) + S(n+1,3) = \frac{3^n+1}{2}$. 
Therefore,  the size of $\widehat{P}_B(S,n)$ is $(3^n+1)/2$.
\end{proof}


\subsection{Parity of $\widehat{P}_B(S,n)$}
In the previous section we showed that $P_B(S,n)$ was always a multiple of a power of 2, and hence always even.  This is no longer the case for $\widehat{P}_B(S,n)$ as we show in the next theorem. 
\begin{theorem}
Let $S = \{i_1,i_2,\hdots,i_s\}$. Then $\#\widehat{P}_B(S,n)$ is even if $S$ contains some even number or if $n$ is odd, and is odd otherwise.
\end{theorem}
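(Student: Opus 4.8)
The plan is to prove the equivalent congruence $\#\widehat{P}_B(S,n)\equiv \epsilon(S,n)\pmod 2$, where I set $\epsilon(S,n)=1$ when $S$ contains no even number and $n$ is even, and $\epsilon(S,n)=0$ otherwise; the asserted parity statement is exactly this. I would argue by strong induction on the element-sum $i_1+i_2+\cdots+i_s$, which is the statistic that the recursion of Theorem~\ref{thm:SaganrelationBn0} strictly decreases.

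The first step is to observe that this recursion collapses modulo $2$. Because $S$ is $n$-admissible we have $i_s\le n-1$, so $n-i_s\ge 1$ and the exponent $2(n-i_s)+1\ge 3$; hence $2^{2(n-i_s)+1}$ is even and the whole leading term $\binom{n}{i_s-1}\#\widehat{P}_B(S_1,i_s-1)2^{2(n-i_s)+1}$ disappears mod $2$, leaving
\[
\#\widehat{P}_B(S,n)\equiv \#\widehat{P}_B(S_1,n)+\#\widehat{P}_B(S_2,n)\pmod 2,
\]
with $S_1=S-\{i_s\}$ and $S_2=S_1\cup\{i_s-1\}$. Both $S_1$ and $S_2$ have strictly smaller element-sum, so the inductive hypothesis will supply $\#\widehat{P}_B(S_1,n)\equiv\epsilon(S_1,n)$ and $\#\widehat{P}_B(S_2,n)\equiv\epsilon(S_2,n)$.

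The crux is then to verify that the indicator obeys the identical rule $\epsilon(S,n)\equiv\epsilon(S_1,n)+\epsilon(S_2,n)\pmod 2$, a short check on three cases. If $n$ is odd every term is $0$. If $n$ is even but $S_1$ already contains an even number, then so do $S$ and $S_2$, and again every term is $0$. The only substantive case is $n$ even with $S_1$ entirely odd: there $\epsilon(S_1,n)=1$, and since $i_s$ and $i_s-1$ have opposite parities exactly one of $\epsilon(S,n),\epsilon(S_2,n)$ equals $1$, so $\epsilon(S,n)+\epsilon(S_2,n)=1=\epsilon(S_1,n)$, which is the desired congruence. I must also treat the degenerate possibility $i_s=i_{s-1}+2$, in which $S_2$ is not admissible and $\#\widehat{P}_B(S_2,n)=0$; but then $S_2$ contains the even member of the consecutive pair $i_{s-1},i_{s-1}+1$, so $\epsilon(S_2,n)=0$ as well and the congruence is unaffected.

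For the base cases I would compute $S=\emptyset$ and $S=\{1\}$ explicitly. For $S=\emptyset$, Theorem~\ref{thm:Bn0emptyset} gives $(3^n+1)/2$, and using $3^n\equiv 3\pmod 4$ for odd $n$ and $3^n\equiv 1\pmod 4$ for even $n$ shows this is odd exactly when $n$ is even, matching $\epsilon(\emptyset,n)$. For $S=\{1\}$ the recursion of Theorem~\ref{thm:SaganrelationBn0} (with $\#\widehat{P}_B(\emptyset,0)=1$ and no peak allowed at position $0$) yields $\#\widehat{P}_B(\{1\},n)=2^{2n-1}-(3^n+1)/2$, whose parity is that of $(3^n+1)/2$ and hence again agrees with $\epsilon(\{1\},n)$. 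I expect the main obstacle to be not any single computation but the careful bookkeeping at the boundary of admissibility: confirming that the two recursions stay in lockstep precisely in the degenerate spots (non-admissible $S_2$, and the $i_s=1$ base case) where $\#\widehat{P}_B$ quietly drops to $0$ and one must check that $\epsilon$ does the same.
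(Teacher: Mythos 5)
Your proof is correct and follows essentially the same route as the paper's: induction on $i_1+i_2+\cdots+i_s$, reduction of the recursion of Theorem~\ref{thm:SaganrelationBn0} modulo $2$ (the leading term vanishes because of the factor $2^{2(n-i_s)+1}$), and a case analysis on the parity of $n$ and of the elements of $S$, which you merely repackage as the indicator $\epsilon(S,n)$. If anything, your write-up is slightly more careful than the paper's, since you explicitly check the degenerate situations (the non-admissible $S_2$ when $i_s=i_{s-1}+2$, where $\#\widehat{P}_B(S_2,n)=0$, and the base case $S=\{1\}$ where the recursion degenerates) that the paper's proof passes over silently.
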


\begin{proof}
We induct on $i_1+i_2+\cdots+i_s$.

Our base case is $i_1+i_2\cdots+i_s = 0$, where, $S = \emptyset$. Clearly, $S$ contains no even elements. Note that $\#\widehat{P}_B(\emptyset,n) = (3^n+1)/2$ is even if $n$ is odd and odd if $n$ is even, thus our claim holds.

Recall Theorem \ref{thm:SaganrelationBn0}, which states that if $S_1 = S - \{i_s\}$ and $S_2 = S_1 \cup \{i_s-1\}$, then 
\[ \#\widehat{P}_B(S,n) = {n \choose i_s-1}\#\widehat{P}_B(S_1,i_s-1)2^{2(n-i_s)+1}-\#\widehat{P}_B(S_1,n)-\#\widehat{P}_B(S_2,n).\] 
Note that the first term will always be even, since it is multiplied by $2$ with some positive exponent. Therefore $ \#\widehat{P}_B(S,n)$ is even if and only if $\#\widehat{P}_B(S_1,n)+\#\widehat{P}_B(S_2,n)$ is even.  If $n$ is odd, then by our inductive assumption, $\#\widehat{P}_B(S_1,n)$ and $\#\widehat{P}_B(S_2,n)$ are both even, then their sum is even.  

Now, consider the case where $n$ is even.  If $S$ has at least one even element, let $i_j$ be the first even element in $S$. Either $i_j \in S_1$ or $i_j = i_s$. In the first case, our inductive hypothesis implies that $\#\widehat{P}_B(S_1,n)$ and $\#\widehat{P}_B(S_2,n)$ are both even, then their sum is even. In the second case, $S_1$ has no even elements, thus by our inductive hypothesis, $\#\widehat{P}_B(S_1,n)$ is odd. Note that if $i_s$ is even, then $i_s-1$ is odd and $S_2$ has no even elements. Therefore $\#\widehat{P}_B(S_2,n)$ is also odd, thus their sum is even.

Now consider the case where $S$ contains no even elements and $n$ is still even. Since $S_1$ contains no even elements, by our inductive hypothesis $\#\widehat{P}_B(S_1,n)$ is odd. But since $i_s$ is odd, $i_s-1$ must be even. Therefore by our inductive hypothesis $\#\widehat{P}_B(S_2,n)$ is even, hence their sum is odd.
\end{proof}

\subsection{Relationship between $\#P_B(S,n)$ and $\#\widehat{P}_B(S,n)$}  The following relation between $\#P_B(S,n)$ and $\#\widehat{P}_B(S,n)$ allows us to extrapolate some results from Section 2.

\begin{proposition}\label{prop:relation-B_n}
If $S$ is admissible, then
\[\# P_B(S,n) = \#\widehat{P}_B(S,n)+\#\widehat{P}_B( S \cup \{1\},n).\]
\end{proposition}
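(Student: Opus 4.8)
The plan is to build an explicit bijection between $B_n$ and the set of augmented words $\{0\pi_1\pi_2\cdots\pi_n : \pi \in B_n\}$ underlying $\widehat{P}_B$, namely the map $\pi \mapsto 0\pi$ that simply prepends a zero. First I would record the crucial observation that prepending $0$ does not affect any peak at a position $i \geq 2$: the defining inequality $\pi_{i-1} < \pi_i > \pi_{i+1}$ involves only entries of $\pi$ itself and is therefore unchanged. The only new phenomenon occurs at position $1$, which becomes a peak of $0\pi$ precisely when $\pi_0 = 0 < \pi_1 > \pi_2$, that is, when $\pi_1 > 0$ and $\pi_1 > \pi_2$. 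Consequently, for any $\pi \in B_n$ the peak set of $0\pi$ equals $P_B(\pi)$ when position $1$ is not a peak, and equals $P_B(\pi) \cup \{1\}$ when it is.

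Next I would fix an admissible set $S$ and partition $P_B(S,n) = \{\pi \in B_n : P_B(\pi) = S\}$ according to this dichotomy: let $X_1$ consist of those $\pi$ with $\pi_1 > 0$ and $\pi_1 > \pi_2$, and let $X_0$ be the complement inside $P_B(S,n)$. Since $S$ is admissible in $B_n$ we have $1 \notin S$, so an augmented word $0\pi$ has peak set exactly $S$ iff $P_B(\pi) = S$ and position $1$ is not a peak, and has peak set exactly $S \cup \{1\}$ iff $P_B(\pi) = S$ and position $1$ is a peak. Under the prepending bijection this identifies $X_0$ with $\widehat{P}_B(S,n)$ and $X_1$ with $\widehat{P}_B(S \cup \{1\},n)$. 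Because $P_B(S,n) = X_0 \sqcup X_1$ is a disjoint union, adding cardinalities yields the claimed identity.

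The one point requiring care, and the only real obstacle, is the degenerate case $2 \in S$, where $S \cup \{1\}$ contains the consecutive pair $\{1,2\}$ and is thus not admissible, so that $\#\widehat{P}_B(S \cup \{1\},n) = 0$. I would verify that this is consistent with the bijection: if $2 \in S$ then every $\pi \in P_B(S,n)$ has a peak at position $2$, forcing $\pi_1 < \pi_2$, so position $1$ can never be a peak of $0\pi$ and hence $X_1 = \emptyset$. Both the partition and the identity therefore degenerate correctly, and no separate argument is needed.
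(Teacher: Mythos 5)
Your proof is correct and follows essentially the same route as the paper: partition $P_B(S,n)$ according to whether prepending $0$ turns position $1$ into a peak, then transport each block through the prepend-$0$ bijection to $\widehat{P}_B(S,n)$ and $\widehat{P}_B(S\cup\{1\},n)$ respectively. If anything you are more careful than the paper, whose proof phrases the dichotomy merely as having a ``descent at position $1$'' (and even misstates that as $\pi_1<\pi_2$), glossing over the requirement $\pi_1>0$ that you state explicitly, as well as the degenerate case $2\in S$ that you check.
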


\begin{proof}

 For any $\pi \in P_B(S,n)$, either $\pi$ has a descent at position $1$ (i.e. $\pi_1 < \pi_2$), or it does not. Therefore we can write $P_B(S,n)$ as a union of disjoint sets $P_B(S,n)=P_{\alpha}(S,n) \cup P_{\beta}(S,n)$ where $\pi \in P_{\alpha} (S,n)$ has a descent at position $1$ and $\pi \in P_{\beta}(S,n)$ does not. 
Note that $\pi \in P_{\alpha} (S,n)$ correspond to an element in  $\widehat{P}_B(S \cup \{1\},n)$ by adding a zero at the beginning of $\pi$. Hence,  $\#P_{\alpha}(S,n) = \#\widehat{P}_B(S \cup \{1\},n)$. Similarly,  any $\pi \in P_{\beta}(S,n)$ corresponds to an element in $\widehat{P}_B(S,n)$ and thus $\#P_{\beta} (S,n) = \#\widehat{P}_B(S, n)$.
Therefore $\#P_B(S,n) = \#\widehat{P}_B(S,n) + \#\widehat{P}_B(S \cup \{1\},n).$
\end{proof}

Proposition \ref{prop:relation-B_n} implies the following corollary.

\begin{corollary}
If $S$ is admissible and $2 \in S$, then
\[\# P_B(S,n)=\#\widehat{P}_B(S,n).\]
\end{corollary}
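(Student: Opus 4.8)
The plan is to derive this corollary directly from Proposition \ref{prop:relation-B_n}, which already establishes the decomposition
\[
\#P_B(S,n) = \#\widehat{P}_B(S,n) + \#\widehat{P}_B(S \cup \{1\},n).
\]
Given this, it suffices to show that the extra term $\#\widehat{P}_B(S \cup \{1\},n)$ vanishes whenever $2 \in S$. So the entire proof reduces to demonstrating that $S \cup \{1\}$ is not an admissible peak set in this regime, i.e., that no signed permutation $\pi$ with $\pi_0 = 0$ can simultaneously have peaks at both positions $1$ and $2$.

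The key step, and the heart of the argument, is the observation that peaks at two consecutive positions are mutually exclusive. Recall that in the $\widehat{P}_B$ setting a peak at position $1$ means $\pi_0 < \pi_1 > \pi_2$, which (since $\pi_0 = 0$) forces $\pi_1 > \pi_2$. On the other hand, a peak at position $2$ means $\pi_1 < \pi_2 > \pi_3$, which forces $\pi_1 < \pi_2$. These two conditions contradict each other, so no single permutation can have both positions $1$ and $2$ as peaks. This is the same ``no two consecutive integers in a peak set'' phenomenon already noted in Section \ref{Bn} for ordinary peak sets, now applied at the very start of the word. I would state this as: since $2 \in S$, the set $S \cup \{1\}$ contains the consecutive pair $\{1,2\}$, hence $\#\widehat{P}_B(S \cup \{1\},n) = 0$.

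Substituting this vanishing term back into the decomposition from Proposition \ref{prop:relation-B_n} immediately yields $\#P_B(S,n) = \#\widehat{P}_B(S,n)$, completing the argument. I do not anticipate any genuine obstacle here; the only point requiring care is making the consecutive-peak incompatibility explicit at position $1$, where the convention $\pi_0 = 0$ enters. The proof is short and structural, relying entirely on the already-proven relation and the elementary peak-position contradiction.
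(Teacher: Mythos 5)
Your proof is correct and follows exactly the paper's own argument: apply Proposition \ref{prop:relation-B_n} and observe that $S \cup \{1\}$ contains the consecutive pair $1,2$, so $\#\widehat{P}_B(S \cup \{1\},n) = 0$. Your spelled-out contradiction ($\pi_1 > \pi_2$ versus $\pi_1 < \pi_2$) is just a more explicit version of the paper's remark that two consecutive peaks are impossible.
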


\begin{proof}
If $2 \in S$, then that means that the peak set $S \cup \{1\}$ has two consecutive peaks which is clearly not possible. This means that $\#\widehat{P}_B( S \cup \{1\},n)=0$, implying that $\# P_B(S,n)=\#\widehat{P}_B(S,n)$ using Proposition \ref{prop:relation-B_n}.
\end{proof}

Using this relation, we are able to find a formula for $\#\widehat{P}_B(S,n)$ in the case where the permutation has one peak.

\begin{proposition}\label{prop:case-m}
Let $S =\{m\}$ be admissible, then
\[\#\widehat{P}_B(\{m\},n) =  4^{n-m-1}\sum_{i=1}^{m}{n \choose m-i}(3^{m-i}+1)4^{i}(-1)^{i+1}  -(m\bmod 2)\left[\frac{3^{n}+1}{2}\right].  \]
\end{proposition}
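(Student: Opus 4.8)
The plan is to set up and solve a first-order recursion in $m$ for the quantity $a_m := \#\widehat{P}_B(\{m\},n)$. Specializing Theorem~\ref{thm:SaganrelationBn0} to the singleton $S = \{m\}$ (so that $i_s = m$, $S_1 = \emptyset$, and $S_2 = \{m-1\}$) and then invoking Theorem~\ref{thm:Bn0emptyset} to replace $\#\widehat{P}_B(\emptyset,m-1)$ by $(3^{m-1}+1)/2$ and $\#\widehat{P}_B(\emptyset,n)$ by $(3^n+1)/2$, I would obtain the recursion
\[
a_m = \binom{n}{m-1}(3^{m-1}+1)4^{n-m} - \frac{3^n+1}{2} - a_{m-1},
\]
valid for $m \geq 2$ (here I have used $\tfrac{3^{m-1}+1}{2}\cdot 2^{2(n-m)+1} = (3^{m-1}+1)4^{n-m}$). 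Writing $c_m$ for the first two terms, this is simply $a_m = c_m - a_{m-1}$.

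For the base case $m=1$ I would argue separately, since Theorem~\ref{thm:SaganrelationBn0} would produce the meaningless set $S_2=\{0\}$. Applying Proposition~\ref{prop:relation-B_n} with $S = \emptyset$ gives $\#P_B(\emptyset,n) = \#\widehat{P}_B(\emptyset,n) + \#\widehat{P}_B(\{1\},n)$, and combining Proposition~\ref{prop:nopeakssigned} with Theorem~\ref{thm:Bn0emptyset} yields $a_1 = 2^{2n-1} - (3^n+1)/2$. One then checks that the claimed closed form reduces to exactly this value at $m=1$ (the sum collapses to the single term $i=1$, giving $8\cdot 4^{n-2} = 2^{2n-1}$). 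With the base case in hand, I would prove the formula by induction on $m$: substitute the inductive expression for $a_{m-1}$ into $a_m = c_m - a_{m-1}$ and verify that it rearranges into the desired expression for $a_m$.

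The one genuinely delicate point is the index bookkeeping in the inductive step. Negating $a_{m-1}$ multiplies its summand by $-1$ and forces the reindexing $j = i+1$; after factoring a common $4^{n-m-1}$ out front, the shifted sum runs over $j = 2,\dots,m$ with summand $\binom{n}{m-j}(3^{m-j}+1)4^{j}(-1)^{j+1}$, and the leading term $\binom{n}{m-1}(3^{m-1}+1)4^{n-m}$ coming from $c_m$ fills in exactly the missing $j=1$ term, producing the full sum from $1$ to $m$. The only other check is the scalar term: the $-\tfrac{3^n+1}{2}$ in $c_m$ combines with $+((m-1)\bmod 2)\tfrac{3^n+1}{2}$ from $-a_{m-1}$, and the identity $-1 + ((m-1)\bmod 2) = -(m\bmod 2)$ yields the claimed $-(m\bmod 2)\big[\tfrac{3^n+1}{2}\big]$. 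Equivalently, one can bypass the induction by telescoping $a_m = \sum_{j=1}^m (-1)^{m-j}c_j$ with the convention $a_0 = 0$ and then substituting $i = m-j+1$; the alternating sum $\sum_{j=1}^m(-1)^{m-j} = (m\bmod 2)$ produces the parity term directly.
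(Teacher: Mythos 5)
Your proof is correct and follows essentially the same route as the paper: both establish the base case $a_1 = 2^{2n-1} - (3^n+1)/2$ via Proposition~\ref{prop:relation-B_n} together with Proposition~\ref{prop:nopeakssigned} and Theorem~\ref{thm:Bn0emptyset}, and both run an induction on $m$ using the recursion of Theorem~\ref{thm:SaganrelationBn0}, with the same reindexing of the alternating sum and the same parity bookkeeping. The only cosmetic difference is that the paper steps from $m$ to $m+1$ while you step from $m-1$ to $m$ (your closing telescoping remark is a nice equivalent repackaging, not a different argument).
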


\begin{proof}
We will induct on $m$. First let $m=1$ then using Theorem \ref{thm:Bn0emptyset} and Proposition \ref{prop:relation-B_n},
\begin{align*}
\#\widehat{P}_B(\{1\},n)  & = \#P_B(\emptyset,n) - \#\widehat{P}_B(\emptyset,n)\\
& = 2^{2n-1}-\left[\frac{3^{n}+1}{2}\right]\\
& =4^{n-2}(2)(4) -\left[\frac{3^{n}+1}{2}\right] \\
& = 4^{n-1-1}\sum_{i=1}^{1}{n \choose 1-i}(3^{1-i}+1)4^{i}(-1)^{i+1} - (1\bmod 2)\left[\frac{3^{n}+1}{2}\right].    
\end{align*}	
We assume our claim is true for $m$ and we consider $m+1$. Apply Theorem \ref{thm:SaganrelationBn0} for the peak set $S= \{m+1\}$
to obtain the following
\[\#\widehat{P}_B (\{m+1\},n) =  {n \choose m}\#\widehat{P}_B(\emptyset,m)\#P_B(\emptyset,n-m) - \#\widehat{P}_B(\emptyset,n) - \#\widehat{P}_B(\{m\},n).\]
Apply Proposition \ref{prop:nopeakssigned},  Theorem \ref{thm:Bn0emptyset} and the inductive hypothesis, then
\begin{align*}
& \#\widehat{P}_B(\{m+1\},n) = {n \choose m}\left( \frac{ 3^{m}+1}{2}\right)2^{2(n-m)-1} - \left( \frac{ 3^{n}+1}{2}\right) - \\  
& \qquad 4^{n-m-1}\sum_{i=1}^{m}\bigg[{n \choose m-i}(3^{m-i}+1) 4^{i}(-1)^{i+1} \bigg]+ (m\bmod2)\left( \frac{ 3^{n}+1}{2}\right)\\
& = -4^{n-m-1} \bigg( {n \choose m-0}(3^{m-0}+1)(4)^{0}(-1)^{0+1} \\
& \qquad+\sum_{i=1}^{m}{n \choose m-i}(3^{m-i}+1)4^{i}(-1)^{i+1}\bigg)  -(m+1\mod2)\left( \frac{ 3^{n}+1}{2}\right)\\
& = -4^{n-m-1}\sum_{i=0}^{m}{n \choose m-i}(3^{m-i}+1)4^{i}(-1)^{i+1}   - (m+1\bmod2)\left( \frac{ 3^{n}+1}{2}\right)\\ 
& =  -4^{n-(m+1)-1}\sum_{i=1}^{m+1}{n \choose (m+1)-i}(3^{(m+1)-i}+1)4^{i}(-1)^{i+1}\\
&\qquad -(m+1\mod2)\left( \frac{ 3^{n}+1}{2}\right). \qedhere
\end{align*}

\end{proof}


Again, applying the relation to the case $S = \left\{m\right\}$ gives us a result for $S = \left\{1,m\right\}$.

\begin{corollary}\label{prop:case-1,m}
Let $S =\{1,m\}$ be admissible, then
\begin{align*}
 \#\widehat{P}_B(S,n) &= 2^{2n-2}\left[   {n-1 \choose m-1} -1  \right]  -  4^{n-m-1}\sum_{i=1}^{m}{n \choose m-i}(3^{m-i}+1)(4)^{i}(-1)^{i+1} \\  
&+ (m\bmod 2)\left[\frac{3^{n}+1}{2}\right].  
\end{align*}
\end{corollary}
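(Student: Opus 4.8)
The plan is to obtain $\#\widehat{P}_B(\{1,m\},n)$ as a difference of two quantities already computed in the paper, by applying Proposition \ref{prop:relation-B_n} with the ordinary peak set $S=\{m\}$. Since $\{m\}$ is admissible for $2\le m\le n-1$ and $\{m\}\cup\{1\}=\{1,m\}$ contains no two consecutive peaks exactly when $m\ge 3$ (the regime in which the present corollary is stated), Proposition \ref{prop:relation-B_n} reads
\[
\#P_B(\{m\},n)=\#\widehat{P}_B(\{m\},n)+\#\widehat{P}_B(\{1,m\},n),
\]
and hence
\[
\#\widehat{P}_B(\{1,m\},n)=\#P_B(\{m\},n)-\#\widehat{P}_B(\{m\},n).
\]
The whole task then reduces to substituting the known closed forms for the two terms on the right and simplifying.

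For the first term I would invoke Theorem \ref{thm:maintheorem}, which gives $\#P_B(\{m\},n)=p_B(\{m\},n)\,2^{2n-s-1}$ with $s=|\{m\}|=1$, so that the power of two is $2^{2n-2}$; Corollary \ref{thm:m} then identifies the polynomial as $p_B(\{m\},n)=\binom{n-1}{m-1}-1$, yielding
\[
\#P_B(\{m\},n)=\left[\binom{n-1}{m-1}-1\right]2^{2n-2}.
\]
This is precisely the first summand in the statement. For the second term I would quote Proposition \ref{prop:case-m} verbatim,
\[
\#\widehat{P}_B(\{m\},n)=4^{n-m-1}\sum_{i=1}^{m}\binom{n}{m-i}(3^{m-i}+1)4^{i}(-1)^{i+1}-(m\bmod 2)\left[\frac{3^{n}+1}{2}\right].
\]

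Finally I would subtract the second displayed expression from the first. Distributing the overall minus sign flips the sign of the summation term and also flips the sign of the $(m\bmod 2)$ correction, so the $-(m\bmod 2)[(3^n+1)/2]$ appearing in Proposition \ref{prop:case-m} becomes $+(m\bmod 2)[(3^n+1)/2]$; the result matches the three summands of the corollary exactly. I do not expect any genuine obstacle here: the argument is a direct combination of Proposition \ref{prop:relation-B_n}, Theorem \ref{thm:maintheorem} together with Corollary \ref{thm:m}, and Proposition \ref{prop:case-m}. The only point deserving a word of care is the admissibility bookkeeping, namely that $m\ge 3$ guarantees $\{1,m\}$ has no consecutive peaks so that the count is genuinely nonzero, which is exactly the complementary case to the corollary immediately following Proposition \ref{prop:relation-B_n} (where $2\in S$ forces $\#\widehat{P}_B(S\cup\{1\},n)=0$).
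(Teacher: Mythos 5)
Your proposal is correct and follows essentially the same route as the paper's own proof: apply Proposition \ref{prop:relation-B_n} to $S=\{m\}$, substitute Theorem \ref{thm:maintheorem} together with Corollary \ref{thm:m} for $\#P_B(\{m\},n)$ and Proposition \ref{prop:case-m} for $\#\widehat{P}_B(\{m\},n)$, and subtract. Your added remark on admissibility (that $m\ge 3$ rules out consecutive peaks in $\{1,m\}$) is a sensible piece of bookkeeping that the paper leaves implicit.
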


\begin{proof}
Apply Proposition \ref{prop:relation-B_n} to obtain the following
\[\# P_B(\{m\},n) = \#\widehat{P}_B(\{m\},n)+\#\widehat{P}_B( \{m\} \cup \{1\},n).\]
We then use Theorem \ref{thm:maintheorem} together with Corollary \ref{thm:m} for $\# P_B(\{m\},n)$ and Proposition \ref{prop:case-m} for $\#\widehat{P}_B(\{m\},n)$. The rest follows.
\end{proof}


The following result is a general result for a two-element peak set $S$.
\begin{proposition}
Let $S = \{m, m + z\}$ be admissible, then $\#\widehat{P}_B(S,n)$ equals

\begin{align*}
&\sum_{i=0}^{z-2}(-1)^{i}2^{2(n-m-z+i+1)-1}{n \choose m+z-1-i}\bigg[4^{z-i-2}\sum_{j=1}^{m}\bigg((3^{m-j}+1)4^{j}(-1)^{j+1}\\
& {m+z-i-1 \choose m-j}\bigg)-(m \mod2)\left(\frac{3^{m+z-i-1}+1}{2}\right)\bigg]-(z-1\mod2)\bigg[4^{n-m-1}\\
&\sum_{i=1}^{m}\bigg((3^{m-i}+1)4^{i}(-1)^{i+1}{n \choose m-i}\bigg) - (m\mod2)\left(\frac{3^{n}+1}{2}\right)\bigg].
\end{align*}

\end{proposition}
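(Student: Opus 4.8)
The plan is to apply the recursion of Theorem~\ref{thm:SaganrelationBn0} and unfold it into the displayed closed form. For $S=\{m,m+z\}$ the largest element is $i_s=m+z$, so $S_1=S-\{i_s\}=\{m\}$ and $S_2=S_1\cup\{i_s-1\}=\{m,m+z-1\}$. The key observation is that $S_2$ is again a two-element set of the same shape, but with the gap reduced from $z$ to $z-1$. Writing $F(z)=\#\widehat{P}_B(\{m,m+z\},n)$, $C=\#\widehat{P}_B(\{m\},n)$, and
\[
T(z)=\binom{n}{m+z-1}\,\#\widehat{P}_B(\{m\},m+z-1)\,2^{2(n-m-z)+1},
\]
Theorem~\ref{thm:SaganrelationBn0} becomes the first-order linear recurrence $F(z)=T(z)-C-F(z-1)$, valid for every admissible $z\ge 2$.

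First I would fix the boundary value. For the recurrence at $z=2$ I only need $F(1)=\#\widehat{P}_B(\{m,m+1\},n)$; since $\{m,m+1\}$ contains two consecutive integers it is not admissible, so $F(1)=0$. With this, a routine induction on $z$ (telescoping the alternating signs) solves the recurrence as
\[
F(z)=\sum_{i=0}^{z-2}(-1)^i\bigl(T(z-i)-C\bigr)
=\sum_{i=0}^{z-2}(-1)^i T(z-i)\;-\;C\sum_{i=0}^{z-2}(-1)^i .
\]
The remaining sign sum is elementary: $\sum_{i=0}^{z-2}(-1)^i$ has $z-1$ terms and equals $1$ when $z$ is even and $0$ when $z$ is odd, i.e.\ it equals $(z-1\bmod 2)$, which produces exactly the $(z-1\bmod 2)$ prefactor on the last bracket of the claimed formula.

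Then I would substitute Proposition~\ref{prop:case-m} for every single-peak quantity above. Writing $N=m+z-1-i$, one checks $N-m-1=z-i-2$, so $\#\widehat{P}_B(\{m\},N)$ is precisely the inner bracket $4^{z-i-2}\sum_{j=1}^{m}(3^{m-j}+1)4^{j}(-1)^{j+1}\binom{m+z-i-1}{m-j}-(m\bmod 2)\tfrac{3^{m+z-i-1}+1}{2}$, and likewise $C$ expands to the final bracket. Re-expressing the exponent $2(n-m-(z-i))+1$ as $2(n-m-z+i+1)-1$ matches the stated powers of $2$, and the reindexing $i\mapsto z-k$ turns the telescoped sum into the displayed range $i=0,\dots,z-2$ with the binomial $\binom{n}{m+z-1-i}$ aligned to $T(z-i)$. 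I expect the main obstacle to be purely notational: reconciling the recurrence's natural form with the heavily indexed closed form in the statement, in particular verifying that the two substitutions (into $T(z-i)$ and into $C$) land on the two brackets of the proposition verbatim. No new combinatorial idea beyond Theorem~\ref{thm:SaganrelationBn0} and Proposition~\ref{prop:case-m} is needed.
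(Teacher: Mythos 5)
Your proof is correct and follows essentially the same route as the paper: both unroll the recursion of Theorem~\ref{thm:SaganrelationBn0} in the gap $z$ (using that $\{m,m+1\}$ is inadmissible, so the telescoping terminates at zero), reduce the alternating constant terms to the $(z-1\bmod 2)$ factor, and substitute Proposition~\ref{prop:case-m} and Proposition~\ref{prop:nopeakssigned}. If anything, your explicit formulation $F(z)=T(z)-C-F(z-1)$ with boundary $F(1)=0$ is tidier than the paper's sketch, whose intermediate display even carries an index typo ($n-(m+z)+1-i$ where it should be $n-(m+z)+1+i$, as your exponent bookkeeping confirms).
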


\begin{proof}
Let $S = \{m, m + z\}$ be admissible, let $S_1 = \{m\}$ and $S_2 = \{m, m + z-1\}$. Apply Theorem \ref{thm:SaganrelationBn0} to obtain the following recursive formula,
\begin{eqnarray*}
\#\widehat{P}_B(S,n) &=& {n \choose m+z-1}\#\widehat{P}_B(S_1,m+z-1)\#P_B(\emptyset,n-(m+z)+1)\\ 
&&- \#\widehat{P}_B(S_1,n) -  \#\widehat{P}_B(S_2,n).
\end{eqnarray*}

Then apply the recursion $\#\widehat{P}_B(S_2,n)$ until $m+z-1$ approaches $m+1$,  then if $a=m+z-1$ we arrive at the following formula for 
$\#\widehat{P}_B(S,n)$
\begin{eqnarray*}
&&\sum_{i=0}^{z-2}\left[ (-1)^{i}{n \choose a-i}\#\widehat{P}_B(\{m\},m+z-1-i) 
  \#P_B(\emptyset,n-(m+z)+1-i)\right]  \\   && - (z-1\mod2)\#\widehat{P}_B(\{m\},n).
\end{eqnarray*}
Using Proposition \ref{prop:case-m} and Proposition \ref{prop:nopeakssigned} we obtain the result. 
\end{proof}


We have the following special case when $S$ is a three element set. 

\begin {proposition}\label{prop:case-1,m,m+2}
Let $S =\{1,m,m+2\}$ be admissible, then 
\begin{align*}
\#\widehat{P}_B(S,n) &=4^{n-m-1}\sum_{i=1}^{m} (3^{m-i}+1)(4)^{i}(-1)^{i+1}\left[{n \choose m-i} - \frac{1}{2}{n \choose m+1}{m+1 \choose m-i} \right]\\
& +4^{n-1}\left[ \frac{m-1}{2}{n \choose m+1}+1-{n-1 \choose m-1}   \right]\\
& + (m \mod 2)\left[{n \choose m+1}  \left( \frac{3^{m+1}+1}{2}   \right)     2^{2(n-m-1)-1}  -\frac{3^{n}+1}{2}  \right].
\end{align*}

\end{proposition}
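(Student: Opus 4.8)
The plan is to run the recursion of Theorem~\ref{thm:SaganrelationBn0} once on $S=\{1,m,m+2\}$ and then feed in the closed forms already available from Corollary~\ref{prop:case-1,m} and Theorem~\ref{thm:Bn0emptyset}. Here $i_s=m+2$, so $i_s-1=m+1$, and the two auxiliary sets are $S_1=S-\{m+2\}=\{1,m\}$ and $S_2=S_1\cup\{m+1\}=\{1,m,m+1\}$. The crucial observation is that $S_2$ contains the consecutive integers $m$ and $m+1$, hence is not admissible, so $\#\widehat{P}_B(S_2,n)=0$. This collapses the three-term recursion to the two-term identity
\[
\#\widehat{P}_B(S,n)=\binom{n}{m+1}\,\#\widehat{P}_B(\{1,m\},m+1)\,2^{2(n-m-2)+1}-\#\widehat{P}_B(\{1,m\},n).
\]

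With this reduction in hand, I would substitute Corollary~\ref{prop:case-1,m} in two places. First, the trailing term $-\#\widehat{P}_B(\{1,m\},n)$ is read off verbatim from that corollary; negating its three lines already produces the $\binom{n}{m-i}$ contribution of the first displayed sum in the claim, the $4^{n-1}\bigl[\,1-\binom{n-1}{m-1}\,\bigr]$ portion of the second line (using $2^{2n-2}=4^{n-1}$), and the $-(m\bmod 2)\tfrac{3^n+1}{2}$ portion of the third line. Second, I would specialize Corollary~\ref{prop:case-1,m} at $n=m+1$ to evaluate $\#\widehat{P}_B(\{1,m\},m+1)$; this substitution simplifies substantially because $4^{(m+1)-m-1}=1$ and $\binom{m}{m-1}-1=m-1$, giving
\[
\#\widehat{P}_B(\{1,m\},m+1)=4^m(m-1)-\sum_{i=1}^{m}\binom{m+1}{m-i}(3^{m-i}+1)4^{i}(-1)^{i+1}+(m\bmod 2)\frac{3^{m+1}+1}{2}.
\]

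It then remains to multiply this by $\binom{n}{m+1}2^{2(n-m-2)+1}$ and collect. Writing $2^{2(n-m-2)+1}=\tfrac12\cdot 4^{n-m-1}$ keeps the bookkeeping clean: the $4^m(m-1)$ summand yields $\tfrac{m-1}{2}\,4^{n-1}\binom{n}{m+1}$ (completing line two), the inner sum produces exactly the $-\tfrac12\binom{n}{m+1}\binom{m+1}{m-i}$ term inside the first displayed sum, and the parity summand produces $(m\bmod2)\binom{n}{m+1}\tfrac{3^{m+1}+1}{2}\,2^{2(n-m-1)-1}$ since $\tfrac12\cdot 4^{n-m-1}=2^{2(n-m-1)-1}$ (completing line three). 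Matching these against the three lines of the claimed formula finishes the argument. The computation is entirely routine; the only genuine content is the admissibility observation that kills $\#\widehat{P}_B(S_2,n)$, and the main practical obstacle is the careful tracking of the powers of $4$ and of the two $(m\bmod 2)$ parity terms so that each lands on the correct line of the stated expression.
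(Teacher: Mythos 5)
Your proposal is correct and follows essentially the same route as the paper: one application of Theorem~\ref{thm:SaganrelationBn0} with $S_1=\{1,m\}$ and $S_2=\{1,m,m+1\}$, the observation that $\#\widehat{P}_B(S_2,n)=0$ because $S_2$ contains consecutive integers, and then substitution of Corollary~\ref{prop:case-1,m} (both at general $n$ and at $n=m+1$) together with the no-peak count $2^{2(n-m-1)-1}$. The only difference is that you carry out the power-of-$4$ and parity bookkeeping explicitly, which the paper leaves to the reader; your algebra checks out.
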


\begin{proof}
Let $S_1 = \{1,m \}$ and let $S_2 = \{1,m, m+1\}$ and apply  Theorem \ref{thm:SaganrelationBn0}. Note that $\#\widehat{P}_B(S_2,n) = 0$, then 
\begin{align*}
\#\widehat{P}_B(S,n) = {n \choose m+1}\#\widehat{P}_B(S_1,m+1)\#P_B(\emptyset,n-(m+2)+1) - \#\widehat{P}_B(S_1,n).
\end{align*}
The result follows from Corollary \ref{prop:case-1,m} and and Proposition \ref{prop:nopeakssigned}. 
\end{proof}


\section{Permutations in $S_n$ with $\pi_0 = 0$}\label{S0n}

Let $S_n$ be the set of all  permutations  $\pi = \pi_1 \pi_2 \ldots \pi_n$ of $[n]$. Recall that we define the set $P(\pi) $ as the set of all peaks of $\pi$. Now, we introduce the condition $\pi_0  = 0$, which will allow our peak set to contain $i =1$. Define $\widehat{P}(\pi)$ as the set of all peaks of $\pi$ with  $\pi_0  = 0$, and $\widehat{P}(S,n)$ as the set of all permutations of $S_n$ having $\pi_0  = 0$ and peak set S.


We first give the recursive method that will allow us to compute formulas for the special peak sets $ S =\{\{m\},\{1,m\},\{1,m,m+2\},\{1,m,n-1\}\}$.  This recursive formula is based on Theorem \ref{thm:maintheorem}.

\begin{theorem} \label{thm:Saganrelation} 
Let $S =\left\{   i_1,i_2,\ldots,  i_s  \right\}$ be an admissible set then,
\[  \#\widehat{P}(S,n)={n \choose i_s-1} \#\widehat{P}(S_1,i_s-1)2^{n-i_s}  -  \#\widehat{P}(S_1,n)  -  \#\widehat{P}(S_2,n).  \]
\end{theorem}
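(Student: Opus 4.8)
The plan is to imitate the partition-and-construct argument used to prove Theorem~\ref{thm:maintheorem}, with the single substantive change that the relevant peak-free count in $S_n$ is $\#P(\emptyset,m)=2^{m-1}$ rather than the signed count $2^{2m-1}$ of Proposition~\ref{prop:nopeakssigned}; this is precisely what replaces the factor $2^{2(n-i_s)+1}$ of Theorem~\ref{thm:SaganrelationBn0} by the factor $2^{n-i_s}$ asserted here. Throughout I take $S_1=S-\{i_s\}$ and $S_2=S_1\cup\{i_s-1\}$, exactly as in Theorem~\ref{thm:SaganrelationBn0}, and set $k=i_s-1$.

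First I would introduce the auxiliary set $\widehat{\Pi}$ consisting of all permutations $\pi=\pi_0\pi_1\cdots\pi_n$ with $\pi_0=0$ such that the prefix, \emph{with the prepended zero included}, satisfies $\widehat{P}(\pi_0\pi_1\cdots\pi_k)=S_1$, while the suffix, \emph{treated as a plain word with no zero prepended}, satisfies $P(\pi_{i_s}\cdots\pi_n)=\emptyset$. Because $S$ is admissible it contains no two consecutive integers, so the largest element of $S_1$ is at most $i_s-2=k-1$; hence the internal peaks of the prefix word (which can only occur at positions $1,\dots,k-1$) account for all of $S_1$, and the empty peak set of the suffix kills every position from $i_s+1$ to $n-1$. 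Consequently the only two positions whose peak status is not pinned down in the full permutation are $k$ and $i_s=k+1$.

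Next I would partition $\widehat{\Pi}$ according to the peak behaviour at these two adjacent positions. The key observation is that $k$ and $k+1$ cannot both be peaks, since $\pi_k>\pi_{k+1}$ and $\pi_k<\pi_{k+1}$ cannot hold simultaneously; this leaves exactly three disjoint and exhaustive cases, whose full peak sets are $S_2=S_1\cup\{k\}$ (peak at $k$ only), $S=S_1\cup\{i_s\}$ (peak at $i_s$ only), and $S_1$ (neither). Checking that every permutation with one of these three full peak sets indeed lies in $\widehat{\Pi}$ (again using $\max S_1\le k-1$ and $\max S=i_s$) gives
\[ \#\widehat{\Pi}=\#\widehat{P}(S_2,n)+\#\widehat{P}(S,n)+\#\widehat{P}(S_1,n). \]
I would then compute $\#\widehat{\Pi}$ directly by building each $\pi$ in three independent steps: choose the $k$ values occupying positions $1,\dots,k$ in $\binom{n}{k}$ ways; arrange them (with the zero in front) so the prefix has peak set $S_1$, which can be done in $\#\widehat{P}(S_1,k)$ ways since peak sets depend only on relative order; and arrange the remaining $n-k$ values so the suffix is peak-free, which can be done in $\#P(\emptyset,n-k)=2^{n-k-1}=2^{n-i_s}$ ways (the unsigned peak-free count from \cite{sagan}). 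Solving the displayed identity for $\#\widehat{P}(S,n)$ then yields the claimed recursion, since $k=i_s-1$.

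The only point requiring genuine care, and the step I expect to be the main obstacle, is the bookkeeping in the previous two paragraphs: one must be scrupulous that the prefix condition is imposed \emph{with} the prepended zero, so that a peak at position $1$ is correctly absorbed into $S_1$, whereas the suffix condition is imposed \emph{without} a prepended zero, so that the unhatted factor $2^{n-i_s}$ is the right count; and one must confirm that $k$ and $i_s$ are genuinely the only undetermined positions. Once this is verified, everything else is the verbatim $S_n$-analogue of the construction in the proof of Theorem~\ref{thm:maintheorem}.
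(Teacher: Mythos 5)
Your proposal is correct and follows essentially the same route as the paper: the paper's proof simply invokes the construction of $\Pi$ from Theorem~\ref{thm:maintheorem} with $k=i_s-1$ and leaves the details to the reader, and your argument is exactly that construction carried out explicitly, with the right adjustments (hatted peak set $\widehat{P}(S_1,k)$ for the prefix with the prepended zero, unhatted peak-free count $\#P(\emptyset,n-k)=2^{n-i_s}$ for the suffix). The bookkeeping you flag as the delicate point is indeed the only content beyond Theorem~\ref{thm:maintheorem}, and you have it right.
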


\begin{proof}
The method to obtain this expression is based on the construction of $\Pi$ in Theorem \ref{thm:maintheorem}. For this we let the max$(S) = i_s$   and $k = i_s-1$, and the rest follows. 
\end{proof}


We need the next result in order to prove future cases.

\begin{proposition}\label{prop:emptyset-Sn}
The number of permutations with peak set  $\widehat{P}(\pi) = \emptyset $ is equal to one.
\end{proposition}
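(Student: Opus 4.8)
The plan is to show that the condition $\widehat{P}(\pi)=\emptyset$ is so restrictive that it forces $\pi$ to be the identity permutation $12\cdots n$, which manifestly has no peaks. Throughout I work with $\pi_0=0$ prepended, so that a peak may occur at any position $i\in\{1,2,\ldots,n-1\}$, the defining condition being $\pi_{i-1}<\pi_i>\pi_{i+1}$.

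First I would record the automatic inequality $\pi_0=0<\pi_1$, which holds because every entry of a permutation of $[n]$ is at least $1$. Consequently the absence of a peak at position $1$, i.e. the failure of $\pi_0<\pi_1>\pi_2$, is equivalent to $\pi_1<\pi_2$, since equality is impossible among distinct entries.

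Next I would propagate this by induction on the position. Suppose $\pi_{i-1}<\pi_i$ holds for some $i$ with $1\le i\le n-1$. Then the hypothesis that there is no peak at position $i$, i.e. that $\pi_{i-1}<\pi_i>\pi_{i+1}$ fails, forces $\pi_i<\pi_{i+1}$. Starting from $\pi_0<\pi_1$ and iterating, I obtain the chain $\pi_1<\pi_2<\cdots<\pi_n$, so $\pi$ is strictly increasing and hence equals the identity. Conversely, the identity clearly has empty peak set, so exactly one permutation qualifies and $\#\widehat{P}(\emptyset,n)=1$.

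The argument involves essentially no obstacle; the only point requiring care is the correct bookkeeping of the peak at position $1$ introduced by the convention $\pi_0=0$, and verifying that the resulting increasing permutation indeed has no peak at any position. This mirrors the role played by Proposition \ref{prop:nopeakssigned} in the signed setting, except that here the sign freedom is absent, collapsing the count from a power of two down to the single identity permutation.
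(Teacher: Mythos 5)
Your proof is correct, and it takes a genuinely different route from the paper's. The paper argues by placing \emph{values} one at a time: it shows $\pi_1=1$ by noting that if $\pi_1>1$, then the value $1$ sits at some later position $m$, which forces a peak somewhere in $\{1,\ldots,m-1\}$; it then repeats the argument to get $\pi_2=2$, and so on, concluding $\pi_i=i$ for all $i$. That argument is non-local --- the assertion that a peak must exist between position $1$ and position $m$ implicitly requires an argument about where the maximum of that segment sits, which the paper leaves unstated. Your proof instead propagates \emph{ascents} locally: from $\pi_0=0<\pi_1$ and the absence of a peak at position $i$, the implication $\pi_{i-1}<\pi_i \Rightarrow \pi_i<\pi_{i+1}$ (using distinctness of entries) chains forward to give $\pi_0<\pi_1<\cdots<\pi_n$, so $\pi$ is the identity. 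Each step of your induction is a two-line logical consequence of the definition of a peak, with no hidden lemma, so your version is tighter and arguably preferable; the paper's version has the advantage of matching the value-placement style of reasoning it uses elsewhere (e.g.\ the decomposition around the minimum in Proposition \ref{prop:nopeakssigned}). Both arguments correctly conclude that the identity is the unique element of $\widehat{P}(\emptyset,n)$.
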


\begin{proof}
Let $\pi=\pi_0 \pi_1 \hdots \pi_n$ be a permutation in $\widehat{P}(\emptyset, n)$, and $\pi_0 = 0$. Now suppose $\pi_1>1$, then there is an integer $m \in \left\{2,3, \hdots n \right\}$ such that $\pi_m = 1$, hence there is a peak at a position $i \in \left\{ 1, \hdots m-1  \right\}$. Therefore, in order to have no peaks, $\pi_1 $ must equal 1. Now suppose $\pi_2>2$, then there is an integer $m \in \left\{3,4, \hdots n \right\}$ such that $\pi_m = 2$, hence there is a peak at a position $i \in \left\{ 2, \hdots , m-1  \right\}$. Therefore, in order to have no peaks, $\pi_2 $ must equal 2. Apply the same procedure $n - 1$ times. Thus in order to have a permutation of the form $\pi=\pi_0 \pi_1 \hdots \pi_n$  with $\pi_0 = 0$ and peak set $\widehat{P}(\pi) = \emptyset $, the permutation must satisfy $\pi_i = i,$ for  $i \in \left\{ 0,1, \hdots ,n \right\} $. 
\end{proof}


We need the next result as a base case for the special case $S = \{m\}$.

\begin{proposition}\label{prop:[1]-Sn}
Let $S = \left\{1\right\}$. Then \[\widehat{P}(S,n) = 2^{n-1}-1. \]
\end{proposition}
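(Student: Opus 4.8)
The plan is to count directly the permutations $\pi = \pi_0\pi_1\cdots\pi_n$ with $\pi_0 = 0$ whose peak set is exactly $\{1\}$. Since $\pi_0 = 0 < \pi_1$ always holds, position $1$ is a peak precisely when $\pi_1 > \pi_2$, i.e. when $\pi$ has a descent at position $1$. Thus $\widehat{P}(\{1\},n)$ consists of those $\pi$ for which $\pi_1 > \pi_2$ and the word $\pi_1\pi_2\cdots\pi_n$ has no peak at any interior position $2,\ldots,n-1$.

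First I would characterize the words $\pi_1\cdots\pi_n$ with no interior peak. Reading the ascent/descent pattern of consecutive pairs, a peak at an interior position is exactly an ascent step immediately followed by a descent step; forbidding all such occurrences forces every descent step to precede every ascent step, so $\pi_1\cdots\pi_n$ must be $V$-shaped: strictly decreasing down to its minimum value $1$ and then strictly increasing. This is the same structural observation used in the base case of Proposition \ref{prop:nopeakssigned}. Each such $V$-shaped permutation is determined by the subset of $\{2,\ldots,n\}$ placed on the decreasing side of $1$, so there are $2^{n-1}$ of them; equivalently this is the count $\#P(\emptyset,n) = 2^{n-1}$ of ordinary permutations with empty peak set obtained in \cite{sagan}.

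Next I would impose the remaining condition $\pi_1 > \pi_2$. A $V$-shaped permutation has a descent at position $1$ unless its decreasing side is empty, which happens for exactly one permutation, namely $\pi_1\cdots\pi_n = 12\cdots n$ (equivalently, the unique element of $\widehat{P}(\emptyset,n)$ provided by Proposition \ref{prop:emptyset-Sn}). Removing this single case from the $2^{n-1}$ $V$-shaped permutations yields $\#\widehat{P}(\{1\},n) = 2^{n-1}-1$.

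The only real content is the $V$-shape characterization of peak-free words, so I expect that structural step to be the main (though routine) obstacle; once it is in hand the enumeration is immediate. Alternatively, one can bypass re-deriving it altogether by observing that prepending $\pi_0 = 0$ gives a bijection between ordinary permutations of $[n]$ with empty peak set and permutations with peak set contained in $\{1\}$, whence $\#\widehat{P}(\emptyset,n) + \#\widehat{P}(\{1\},n) = \#P(\emptyset,n) = 2^{n-1}$, and the claim follows from Proposition \ref{prop:emptyset-Sn}.
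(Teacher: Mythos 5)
Your proposal is correct and is essentially the paper's own argument: the paper partitions the $2^{n-1}$ peak-free permutations of $[n]$ (the count cited from Proposition 2 of \cite{sagan}) into those that acquire a peak at position $1$ when $\pi_0=0$ is prepended and the single one that does not, namely the identity by Proposition \ref{prop:emptyset-Sn}, yielding $2^{n-1}-1$; this is exactly the ``alternative'' you give in your last paragraph. The only difference is that your main argument re-derives $\#P(\emptyset,n)=2^{n-1}$ via the $V$-shape characterization instead of citing it, which is correct but redundant.
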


\begin{proof}

Let $\pi \in P( \emptyset , n)$. Then either $\pi \in \widehat{P}( \left\{1 \right\} , n)$ or by Proposition \ref{prop:emptyset-Sn}, $\pi$ is the identity in $S_n$. Therefore by \cite{sagan} Proposition 2 we have 
$\#\widehat{P}( \left\{1 \right\}, n) = \#P( \emptyset , n) -1 
= 2^{n-1}-1. \qedhere $

\end{proof}
Now we make use of the recursive formula in Theorem \ref{thm:Saganrelation} and the result in Proposition \ref{prop:emptyset-Sn} to obtain a recursive formula for the case when $S= \{m\}$. This will lead us to have a closed formula for this case.

\begin{lemma}\label{lemma:recursiveformula}
Let $S = \left\{m\right\}$ be admissible. Then we can find the number of permutations with peak set $S$ recursively by \[ \#\widehat{P}(S,n) = 2^{n-m}{n \choose m-1} - \#\widehat{P}(\left\{m-1\right\},n) -1.\]
\end{lemma}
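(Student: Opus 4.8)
The plan is to derive this recursion as a direct specialization of Theorem \ref{thm:Saganrelation} to the singleton peak set $S=\{m\}$, with the two auxiliary counts simplified using Proposition \ref{prop:emptyset-Sn}. Because the heavy lifting (the partition-of-a-permutation construction) has already been packaged into Theorem \ref{thm:Saganrelation}, the proof is essentially a substitution, and the only care required is in identifying the auxiliary sets and checking admissibility in the relevant ranges.

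First I would set $i_s=m$, so that the two sets appearing in Theorem \ref{thm:Saganrelation} are $S_1=S-\{m\}=\emptyset$ and $S_2=S_1\cup\{m-1\}=\{m-1\}$. Substituting these into the recursion gives
\[ \#\widehat{P}(\{m\},n)={n \choose m-1}\,\#\widehat{P}(\emptyset,m-1)\,2^{n-m}-\#\widehat{P}(\emptyset,n)-\#\widehat{P}(\{m-1\},n). \]
Next I would invoke Proposition \ref{prop:emptyset-Sn}, which states that the number of permutations with empty peak set (and $\pi_0=0$) is exactly $1$ regardless of size; hence both $\#\widehat{P}(\emptyset,m-1)=1$ and $\#\widehat{P}(\emptyset,n)=1$. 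Inserting these values collapses the first factor and the middle term, yielding
\[ \#\widehat{P}(\{m\},n)=2^{n-m}{n \choose m-1}-\#\widehat{P}(\{m-1\},n)-1, \]
which is precisely the asserted formula.

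The steps are forced, so there is no substantial calculation to grind through; the work is bookkeeping rather than estimation. I would note that the lemma is implicitly stated for $m\geq 2$, since for $m=1$ the set $S_2=\{m-1\}=\{0\}$ is not a valid peak set; the case $m=1$ is instead the base case supplied by Proposition \ref{prop:[1]-Sn}. I would also confirm that the hypotheses of Theorem \ref{thm:Saganrelation} are met: $\emptyset$ is admissible for every size, and $\{m-1\}$ is admissible whenever $\{m\}$ is, so the recursion applies at both arguments $m-1$ and $n$. The one point I would flag as mildly delicate — the main obstacle, such as it is — is making sure that the left block of size $k=m-1$ in the underlying construction is correctly counted by $\#\widehat{P}(\emptyset,m-1)$ rather than by an unprepended analogue; since Proposition \ref{prop:emptyset-Sn} pins this count down to the single increasing arrangement, the identification is unambiguous and no genuine difficulty remains.
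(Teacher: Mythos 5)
Your proposal is correct and takes essentially the same approach as the paper: the paper's own proof is just an unfolded instance of the $\Pi$-construction underlying Theorem \ref{thm:Saganrelation} with $S_1=\emptyset$ and $S_2=\{m-1\}$, followed by the same substitutions $\#\widehat{P}(\emptyset,m-1)=\#\widehat{P}(\emptyset,n)=1$ (Proposition \ref{prop:emptyset-Sn}) and $\#P(\emptyset,n-m+1)=2^{n-m}$ (Proposition 2 of \cite{sagan}), so citing the theorem rather than redoing the construction is only a presentational difference. Your caveat that the recursion requires $m\geq 2$ (so that $S_2=\{m-1\}$ is a genuine peak set) is consistent with how the paper uses the lemma, the case $m=1$ being the base case supplied by Proposition \ref{prop:[1]-Sn}.
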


\begin{proof}

Let $\Pi$ be the set of all $\pi \in S_n$ such that if $\pi = \pi_1 \pi_2 \hdots \pi_{m-1} \pi_m \hdots \pi_n$, then $\widehat{P}(\pi_1\pi_2 \hdots \pi_{m-1}) = \emptyset$ and $P(\pi_m \hdots \pi_n) = \emptyset$.

Note that since $\pi$ could have a peak at position $m-1$, position $m$, or neither, then $\Pi$ is a union of disjoint sets $\Pi = \widehat{P}(\{m-1\},n) \cup \widehat{P}(\{m\},n) \cup \widehat{P}(\emptyset,n)$.  Thus,
\begin{equation} \label{eq1}
\#\widehat{P}( \{ m \} ,n) = \#\Pi - \#\widehat{P}(\emptyset,n) - \#\widehat{P}(\{m-1\},n).
\end{equation}

We can construct $\Pi$ by first choosing the $m-1$ first elements, arranging them so their peak set is the empty set, and arranging the $n-m+1$ other elements so their peak set is the empty set. Therefore by Proposition \ref{prop:emptyset-Sn} and by \cite{sagan} Proposition 2 we have
\begin{align*}
 \#\Pi &= {n \choose m-1 }\#\widehat{P}(\emptyset,m-1)\#P(\emptyset,n-m+1)\\
&= {n \choose m-1}2^{n-m}.
\end{align*}

Also, by Proposition \ref{prop:emptyset-Sn}, $\#\widehat{P}(\emptyset,n) = 1$. Therefore by (\ref{eq1}) the result follows.

\end{proof}
\begin{proposition}\label{closed-m}
Let $S = \left\{m\right\}$ be admissible. Then 
\[\# \widehat{P}(S,n) = \sum_{i=1}^m2^{n-i}{n \choose i-1}(-1)^{m-i} -(m\mod2).\]
\end{proposition}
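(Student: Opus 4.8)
The plan is to prove the closed formula by induction on $m$, using the recursive relation from Lemma \ref{lemma:recursiveformula} as the engine. The recursion states that $\#\widehat{P}(\{m\},n) = 2^{n-m}\binom{n}{m-1} - \#\widehat{P}(\{m-1\},n) - 1$, which expresses the quantity for $m$ in terms of the quantity for $m-1$. The closed formula is an alternating sum, so I expect the induction to proceed by substituting the inductive hypothesis for $\#\widehat{P}(\{m-1\},n)$ and verifying that the new leading term $2^{n-m}\binom{n}{m-1}$ together with the sign flip reproduces the sum for index $m$.

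First I would establish the base case. The natural base case is $m=1$: by Proposition \ref{prop:[1]-Sn} we have $\#\widehat{P}(\{1\},n) = 2^{n-1}-1$, and I would check that the formula $\sum_{i=1}^{1} 2^{n-i}\binom{n}{i-1}(-1)^{1-i} - (1 \bmod 2)$ evaluates to $2^{n-1}\binom{n}{0} - 1 = 2^{n-1}-1$, which matches. This confirms the base case and also fixes the sign conventions.

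Next, for the inductive step, I would assume the formula holds for $m-1$, namely $\#\widehat{P}(\{m-1\},n) = \sum_{i=1}^{m-1} 2^{n-i}\binom{n}{i-1}(-1)^{m-1-i} - ((m-1)\bmod 2)$, and substitute this into the recursion from Lemma \ref{lemma:recursiveformula}. The key algebraic observation is that negating the inductive sum flips each $(-1)^{m-1-i}$ to $(-1)^{m-i}$, so that $-\#\widehat{P}(\{m-1\},n)$ contributes exactly the terms $i=1,\ldots,m-1$ of the target sum (up to the constant). The leading term $2^{n-m}\binom{n}{m-1}$ supplies precisely the missing $i=m$ term $2^{n-m}\binom{n}{m-1}(-1)^{m-m} = 2^{n-m}\binom{n}{m-1}$. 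It then remains to reconcile the constant terms: the recursion contributes $-1$ and also picks up $+((m-1)\bmod 2)$ from negating the parity correction, and I would verify that $-1 + ((m-1)\bmod 2) = -(m\bmod 2)$, which holds because the parity of $m$ and $m-1$ differ.

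The main obstacle, though entirely routine, will be the careful bookkeeping of the two constant contributions and the sign of the alternating factor: one must track simultaneously the $-1$ from the recursion, the parity correction term $(m\bmod 2)$ in the closed formula, and the global sign change under negation, to confirm they combine correctly. Everything else is a direct index-shift in the summation, so once the base case and the constant-term identity $-1 + ((m-1)\bmod 2) = -(m \bmod 2)$ are verified, the result follows immediately.
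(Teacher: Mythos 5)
Your proposal is correct and follows exactly the paper's own argument: induction on $m$ with base case $m=1$ via Proposition \ref{prop:[1]-Sn}, substitution of the inductive hypothesis into the recursion of Lemma \ref{lemma:recursiveformula}, the sign flip turning $(-1)^{m-1-i}$ into $(-1)^{m-i}$, and the constant-term identity $-1 + ((m-1)\bmod 2) = -(m\bmod 2)$. Nothing in your route differs from the paper's proof.
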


\begin{proof}
We induct on $m$. Our base case is $m = 1$. Then by Proposition \ref{prop:[1]-Sn}, our claim is true. Now, by our inductive assumption, 
\[ \widehat{P}(\left\{m-1\right\},n) = \sum_{i=1}^{m-1}2^{n-i}{n \choose i-1}(-1)^{m-1-i}-(m-1\mod2).\]

Using this value for $\widehat{P}(\left\{m-1\right\},n)$ in the recursive formula given in Lemma \ref{lemma:recursiveformula}, we find
\begin{align*}
\#\widehat{P}(S,n) &= 2^{n-m}{n \choose m-1} - 1 - \left( \sum_{i=1}^{m-1}2^{n-i}{n \choose i-1}(-1)^{m-1-i}-(m-1\mod2)\right)\\
&=  2^{n-m}{n \choose m-1}(-1)^0 + \sum_{i=1}^{m-1}2^{n-i}{n \choose i-1}(-1)^{m-i}-(m\mod2)\\
&= \sum_{i=1}^m2^{n-i}{n \choose i-1}(-1)^{m-i}-(m\mod2).\qedhere
\end{align*}

\end{proof}


From Proposition \ref{closed-m}  we notice that we can factor a power of two out of the summation, in this way we obtain a new formula for the case $S = \{m\}$.

\begin{proposition}\label{close-m-polynomial}
Let $S = \{m\}$ be admissible, then
\begin{equation} \label{eq2}
\#\widehat{P} (\{m\}, n) = \frac{p_{m-1}(n)2^{n-m}}{(m-1)!}-(m\mod2)
\end{equation}
where $p_{m-1}(n) = p(S,n)$ is a polynomial depending on $S$ such that $p(n)$ is an integer for all integral $n$. Also, $deg(p_{m-1}(n)) = m-1$.
\end{proposition}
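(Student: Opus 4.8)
The plan is to start directly from the closed form obtained in Proposition~\ref{closed-m}, namely
\[
\#\widehat{P}(\{m\},n) = \sum_{i=1}^m 2^{n-i}\binom{n}{i-1}(-1)^{m-i} - (m\bmod 2),
\]
and simply reorganize the summation so that a single factor of $2^{n-m}/(m-1)!$ is pulled out front. Factoring $2^{n-m}$ from every summand rewrites the exponent $2^{n-i}$ as $2^{n-m}\,2^{m-i}$, leaving the $n$-dependent part as a sum of binomial coefficients with integer scalar coefficients. First I would define
\[
p_{m-1}(n) := (m-1)!\sum_{i=1}^m 2^{m-i}(-1)^{m-i}\binom{n}{i-1},
\]
and then verify by a one-line computation that $\dfrac{p_{m-1}(n)\,2^{n-m}}{(m-1)!}$ equals $\sum_{i=1}^m 2^{n-i}\binom{n}{i-1}(-1)^{m-i}$, so that substituting back into the formula of Proposition~\ref{closed-m} yields exactly the desired Equation~(\ref{eq2}).

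It then remains to check the three assertions about $p_{m-1}(n)$. That $p_{m-1}(n)$ is a polynomial in $n$ is clear, since each $\binom{n}{i-1}$ is a polynomial in $n$ of degree $i-1$. For the degree claim, I would note that among the summands the unique term of largest degree is the one with $i=m$, which contributes $(m-1)!\binom{n}{m-1}$, a polynomial of degree $m-1$ with leading coefficient $1$; every other term ($i \le m-1$) has degree at most $m-2$, so $\deg p_{m-1}(n) = m-1$. For integrality, each $\binom{n}{i-1}$ takes integer values at integer $n$, hence so does $(m-1)!\binom{n}{i-1}$, and since the scalars $2^{m-i}(-1)^{m-i}$ are integers, the finite sum $p_{m-1}(n)$ is an integer for every integral $n$.

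Because every step is elementary algebra, I do not anticipate any genuine obstacle; the only point requiring a little care is the bookkeeping of the exponents of $2$ when factoring, to confirm that exactly $2^{n-m}$ (and not, say, $2^{n-m+1}$) comes out, together with checking that the denominator introduced is precisely $(m-1)!$ rather than $m!$ — this is what forces the leading coefficient of $p_{m-1}(n)$ to be $1$ and fixes its degree at $m-1$. Once these are matched against Proposition~\ref{closed-m}, the identity of Equation~(\ref{eq2}) and all three claimed properties follow at once.
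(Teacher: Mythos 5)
Your proof is correct, but it takes a different route from the paper's. The paper proves this proposition by a fresh induction on $m$: it substitutes the inductive hypothesis into the recurrence of Lemma~\ref{lemma:recursiveformula}, factors out $2^{n-(m+1)}/m!$, and argues that the bracketed expression $\frac{n!}{(n-m)!} - 2m\,p_{m-1}(n)$ is an integer-valued polynomial of degree $m$; in that argument the polynomial is defined only implicitly, through the recursion $p_m(n)=\frac{n!}{(n-m)!}-2m\,p_{m-1}(n)$, and its explicit form is deferred to the corollary that follows (proved by yet another induction). You instead bypass induction entirely by starting from the already-established closed form of Proposition~\ref{closed-m} and defining
\[
p_{m-1}(n) := (m-1)!\sum_{i=1}^m 2^{m-i}(-1)^{m-i}\binom{n}{i-1},
\]
after which the factorization check, the degree claim (the $i=m$ term $(m-1)!\binom{n}{m-1}$ dominates), and integrality are all immediate. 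This is exactly the "factor a power of two out of the summation" idea the paper mentions in the sentence preceding the proposition but does not actually carry out in its proof. Your approach buys something extra: reindexing with $j=m-i$ shows your polynomial is precisely $(m-1)!\sum_{j=0}^{m-1}2^j(-1)^j\binom{n}{m-1-j}$, i.e.\ the explicit formula of Equation~(\ref{eq3}) with $m$ replaced by $m-1$, so you get the subsequent corollary for free, whereas the paper needs a separate inductive argument for it. The paper's route, in exchange, is self-contained relative to Lemma~\ref{lemma:recursiveformula} alone and produces the recursion for $p_m(n)$ that its corollary proof consumes. Both arguments are sound; yours is the more economical given that Proposition~\ref{closed-m} precedes this statement in the paper.
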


\begin{proof}
We will prove this by induction on $m$. The case where $m=1$ is true since we already found that $\#\widehat{P}(\{1\},n)=2^{n-1}-1$ where $p(n)=1$ and is of degree $1-1=0$. We will assume that the proposition is true for $m$ and we will prove it for the $m+1$ case. We will first use the recurrence relation in Lemma \ref{lemma:recursiveformula}  with our inductive assumption to get 

\begin{align*}
\#\widehat{P} (\{m+1\}, n) &= \binom{n}{m}2^{n-(m+1)} -1-\left(\frac{p_m(n)2^{n-m}}{(m-1)!}-(m\mod 2)\right)\\
&=\frac{2^{n-(m+1)}}{m!} \left(\frac{n!}{(n-m)!} - 2 m p_m(n)\right)-(m+1\mod 2).
\end{align*}

We now have to prove that \[\frac{n!}{(n-m)!} - 2 m p_m(n)\] is a polynomial in terms of $n$ with degree $m$. Because $m$ is fixed, we can see that this expression is a polynomial in terms of $n$ and we can also clearly see that $n!/(n-m)!$ has degree $m$ and because of the inductive hypothesis $2 m \cdot p_{m}(n)$ has degree $m-1$ which means that the whole expression has degree $m$ which completes the induction. 
\end{proof}
\begin{corollary}
Additionally,
\begin{equation} \label{eq3}
p_m(n)=m! \sum_{i=0}^m 2^i (-1)^i \binom{n}{m-i}.
\end{equation}
\end{corollary}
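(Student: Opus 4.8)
The plan is to read off the closed form for $p_m(n)$ directly from the two descriptions of $\#\widehat{P}(\{m\},n)$ already in hand, namely the explicit alternating sum of Proposition \ref{closed-m} and the polynomial normalization of Proposition \ref{close-m-polynomial}. No new combinatorial input is required; the whole argument is a single reindexing of a sum followed by a comparison of the two expressions.

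First I would start from Proposition \ref{closed-m}, which gives
\[
\#\widehat{P}(\{m\},n) = \sum_{i=1}^m 2^{n-i}\binom{n}{i-1}(-1)^{m-i} - (m\bmod 2),
\]
and factor the common power $2^{n-m}$ out of every summand by writing $2^{n-i} = 2^{n-m}2^{m-i}$. This recasts the sum as $2^{n-m}\sum_{i=1}^m 2^{m-i}(-1)^{m-i}\binom{n}{i-1}$. Substituting $j = m-i$, so that $i-1 = m-1-j$ and $j$ runs from $0$ to $m-1$, rewrites it as $2^{n-m}\sum_{j=0}^{m-1} (-1)^j 2^j \binom{n}{m-1-j}$.

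Comparing this against the expression $\#\widehat{P}(\{m\},n) = \dfrac{p_{m-1}(n)2^{n-m}}{(m-1)!} - (m\bmod 2)$ from Proposition \ref{close-m-polynomial}, then cancelling the matching $-(m\bmod 2)$ terms and the common factor $2^{n-m}$, I would read off
\[
\frac{p_{m-1}(n)}{(m-1)!} = \sum_{j=0}^{m-1} (-1)^j 2^j \binom{n}{m-1-j},
\]
so that $p_{m-1}(n) = (m-1)!\sum_{j=0}^{m-1}(-1)^j 2^j\binom{n}{m-1-j}$. Finally, replacing the index $m-1$ by $m$ throughout yields the claimed identity $p_m(n) = m!\sum_{i=0}^m (-1)^i 2^i \binom{n}{m-i}$.

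The only point needing care is the bookkeeping in the index shift together with the factorial normalization $(m-1)!$ versus $m!$; since the two propositions describe the same polynomial in $n$ for all admissible values, equating them and matching coefficients is legitimate, and there is no genuine obstacle beyond this routine manipulation.
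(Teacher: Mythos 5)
Your proof is correct, but it takes a genuinely different route from the paper's. The paper proves the identity by induction on $m$: it extracts from the proof of Proposition \ref{close-m-polynomial} the recursion $p_m(n)=\frac{n!}{(n-m)!}-2m\,p_{m-1}(n)$, checks the base case $p_0(n)=1$, and pushes the closed form through the recursion. You instead bypass induction entirely: you take the alternating sum of Proposition \ref{closed-m}, factor out $2^{n-m}$, reindex with $j=m-i$ to get $\#\widehat{P}(\{m\},n)=2^{n-m}\sum_{j=0}^{m-1}(-1)^j2^j\binom{n}{m-1-j}-(m\bmod 2)$, equate this with the normalization $\#\widehat{P}(\{m\},n)=\frac{p_{m-1}(n)2^{n-m}}{(m-1)!}-(m\bmod 2)$ of Proposition \ref{close-m-polynomial}, cancel, and shift the index $m-1\mapsto m$. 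This is valid, and the one delicate point — that the two expressions agree only at the admissible integers $n\ge m+1$, so one needs the fact that polynomials agreeing at infinitely many points are identical — is one you address when you say the propositions describe the same polynomial for all admissible values. Your argument is shorter and makes transparent that the corollary is nothing more than Proposition \ref{closed-m} rewritten in the polynomial normalization, so the paper's induction in some sense re-derives work already done there. What the paper's approach buys in exchange is independence from Proposition \ref{closed-m}: it needs only the recursion and the base case, so it would stand even if the explicit sum formula had not been established first, and it mirrors the recursive structure used throughout that section.
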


\begin{proof}
From Proposition \ref{close-m-polynomial} we have a recursive formula for $p_m(n)$,
\[p_m(n)=\frac{n!}{(n-m)!} - 2 m p_{m-1}(n).\]
We will prove this by induction on $m$. The case where $m=0$ is obviously true since using the formula we get that $p_0(n)=1$ which agrees with $\#\widehat{P}(\{1\},n)=2^{n-1} \cdot 1 -1.$ We will assume that the proposition is true for $m$ and we will it prove it for the $m+1$ case. Using the recursive formula and the inductive hypothesis we get
\begin{align*}
p_{m+1}(n)&=\frac{n!}{(n-(m+1))!} - 2 (m+1) \cdot p_{m}(n)\\
&=\frac{n!}{(n-(m+1))!} - 2 (m+1) \left(\frac{n!}{(n-m)!}+m!\sum_{i=1}^m 2^i (-1)^i \binom{n}{m-i}\right) \\
&=\frac{n!}{(n-(m+1))!}-2(m+1)\left(m! \binom{n}{m}+m!\sum_{i=1}^m 2^i (-1)^i \binom{n}{m-i}\right) \\
&=\frac{n!}{(n-(m+1))!}+(m+1)! \sum_{i=1}^{m+1} 2^i (-1)^i \binom{n}{m+1-i}\\
&=(m+1)! \sum_{i=0}^{m+1} 2^i (-1)^i \binom{n}{m+1-i}.
\end{align*}
which is what we wanted thus completing the induction.
\end{proof}

In the following proposition we compute $\#\widehat{P}(\{m\}, n)$ using a different approach.   The new formula we obtain will help us to compute other special cases such  when $S=\{1,n-1\}$ in a simpler way. 

\begin{proposition}\label{alex's-formula}
Let $S=\{n-m\}$ be admissible. Then
\[\# \widehat{P}(S,n)=\sum_{i=0}^{m-1} 2^i \binom{n-(m-i)}{i+1}.\]
\end{proposition}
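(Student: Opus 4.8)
The plan is to give a direct structural argument organized as an induction on $m$, instead of reducing to the closed form already obtained in Proposition~\ref{closed-m}. The starting point is a description of the words being counted. If $\pi=\pi_0\pi_1\cdots\pi_n$ with $\pi_0=0$ has peak set exactly $\{n-m\}$, then the absence of a peak among positions $1,\dots,n-m-1$ forces $0=\pi_0<\pi_1<\cdots<\pi_{n-m}$: the first descent in this range would create an earlier peak. Likewise the absence of a peak among positions $n-m+1,\dots,n-1$, together with $\pi_{n-m}>\pi_{n-m+1}$, forces the tail $\pi_{n-m+1}\cdots\pi_n$ to be \emph{valley-shaped}, i.e.\ strictly decreasing and then strictly increasing. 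Conversely, any ``ascending run followed by a valley tail'' with $\pi_{n-m}>\pi_{n-m+1}$ lies in $\widehat{P}(\{n-m\},n)$. I would record this characterization first, since the whole count rests on it.

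Next I would split according to the position of the largest value $n$. Since $n$ is a (weak) local maximum, in this shape it can only sit at the peak (position $n-m$) or at the right endpoint (position $n$). In the first case $\pi_{n-m}=n$, so the condition $\pi_{n-m}>\pi_{n-m+1}$ is automatic: one freely chooses the remaining $n-m-1$ entries of the ascending run from $[n-1]$ in $\binom{n-1}{m}$ ways, and the leftover $m$ values are arranged into a valley in $2^{m-1}$ ways (the minimum is the valley bottom, and each of the other $m-1$ values independently joins the descending or the ascending side). This case contributes $2^{m-1}\binom{n-1}{m}$. In the second case $\pi_n=n$; deleting this terminal entry yields a word on $[n-1]$ with $\pi_0=0$ whose peak set is still exactly $\{n-m\}$, and appending $n$ inverts this, giving a bijection with $\widehat{P}(\{n-m\},n-1)$.

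Combining the two cases gives the recursion
\[
\#\widehat{P}(\{n-m\},n)=2^{m-1}\binom{n-1}{m}+\#\widehat{P}(\{n-m\},n-1),
\]
where the second summand is the co-position $m-1$ instance, since in $S_{n-1}$ the peak $n-m$ sits $m-1$ places from the right end. I would then induct on $m$. The base case $m=1$ is immediate: the second case is empty (one cannot have both $\pi_n=n$ and a peak at $n-1$), so the count is $2^0\binom{n-1}{1}=n-1$, which is the claimed formula. For the step $m\ge 2$, one only observes that the target formula splits term-by-term along the recursion: its top term $(i=m-1)$ is exactly $2^{m-1}\binom{n-1}{m}$, while the remaining terms $\sum_{i=0}^{m-2}2^i\binom{n-m+i}{i+1}$ are literally the formula for $\#\widehat{P}(\{n-m\},n-1)$ after rewriting $n-m+i=(n-1)-(m-1)+i$. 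No binomial identity is needed.

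The main obstacle is the structural characterization, and within it the clean handling of the second case: I must verify that removing the terminal $n$ neither destroys the peak at $n-m$ nor creates a spurious peak at position $n-1$, and that keeping $n-m$ an interior position of the shorter word requires $m\ge 2$ (the case $m=1$ collapses the second case to the empty set, which is exactly what the base case uses). Proving that ``no early peak'' forces a strictly increasing prefix and that ``no late peak together with an initial descent'' forces a valley tail is elementary, but it must be argued carefully, since every subsequent count is deduced from it.
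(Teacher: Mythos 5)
Your proof is correct and follows essentially the same route as the paper's: an induction on $m$ in which permutations are split according to whether the maximum value $n$ sits at the peak position $n-m$ (contributing $2^{m-1}\binom{n-1}{m}$ via the choice of tail values and their valley arrangements) or at the final position (giving the bijective reduction to $\widehat{P}(\{n-m\},n-1)$). The only difference is one of exposition: you make explicit the structural characterization (increasing prefix, valley-shaped tail) and the edge case $m=1$, which the paper's proof uses implicitly.
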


\begin{proof}
Let $\pi=\pi_1 \pi_2 \hdots \pi_n$ be a permutation in $S_n$. We will prove this proposition by induction on $m$. We will first prove the base case, when $m=1$. Letting $m=1$ means that we will have a peak only in the $(n-1)$-th position. Note that  $\pi_i \in [n]$ which means that the number on the $n-1$ position has to be $n$ because otherwise there would either be no peaks or more than one peaks.  We know that the numbers before the $n-1$ position must be in increasing order, thus the permutation is completely determined by the element in the $n$th position.  There are $\binom{n-1}{1}$ ways to choose the last element.

Now assume the proposition is true for $m\geq 1$,  we will prove that it is true for $m+1$. This means that we have a peak at the $n-(m+1)$-th position, then using reasoning similar to the one used in the inductive hypothesis, we know that $n$ is either in position $n-(m+1)$  or in the last position.   If $n$ is in the position of the peak, the number of permutations that satisfy this condition is equal to the number of ways to choose the last $m+1$ numbers in the permutation times the number of ways to arrange these $m+1$ numbers so that they do not form a peak. This number is equal  $2^{m} \binom{n-1}{m+1}$.

If   $n$ is in  the $n$-th position of the permutation,  then we can reduce the computation to the $m$-th case of the induction.  Thus, 
\[  2^{m} {n-1 \choose m+1} + \sum_{i=0}^{m-1} 2^i \binom{n-(m-i)}{i+1} =\sum_{i=0}^m 2^i \binom{n-(m+1-i)}{i+1}\] 
which was what we were looking for.
\end{proof}


Note that doing a change of variable in the previous result will lead us to obtain better results for the case $S = \{m\}$. 

\begin{remark}\label{remark-mcase}
Let $S=\{m\}$ be admissible. Notice from the Proposition \ref{alex's-formula} that we can write $\# \widehat{P}(S,n)$, as 
\[ \# \widehat{P}(S,n) =   \sum_{i=0}^{n-(m+1)} 2^i \binom{m+i}{i+1}.\]
\end{remark}


\begin{proposition}\label{prop:1-m}
Let $S=\{1,m\}$ be admissible, then
\begin{eqnarray*}
\#\widehat{P}(\{1,m\},n)& =& \sum_{i=1}^{m-2} \binom{n}{m-i} (2^{m-i-1}-1)(2^{n-(m-i+1)})(-1)^{i+1}\\ && -(m \mod2)(2^{n-1}-1).
\end{eqnarray*}
\end{proposition}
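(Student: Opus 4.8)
The plan is to apply the recursion of Theorem~\ref{thm:Saganrelation} directly to $S=\{1,m\}$ and induct on $m$. With $i_s=m$, $S_1=S-\{m\}=\{1\}$ and $S_2=S_1\cup\{m-1\}=\{1,m-1\}$, the theorem gives
\[
\#\widehat{P}(\{1,m\},n)=\binom{n}{m-1}\,\#\widehat{P}(\{1\},m-1)\,2^{n-m}-\#\widehat{P}(\{1\},n)-\#\widehat{P}(\{1,m-1\},n).
\]
Since $\#\widehat{P}(\{1\},k)=2^{k-1}-1$ by Proposition~\ref{prop:[1]-Sn}, this reduces to the one-step recursion
\[
\#\widehat{P}(\{1,m\},n)=\binom{n}{m-1}(2^{m-2}-1)2^{n-m}-(2^{n-1}-1)-\#\widehat{P}(\{1,m-1\},n),
\]
which expresses the $m$-case in terms of the $(m-1)$-case — exactly the shape of the claimed closed form.

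For the base case I note that $\{1,m\}$ is admissible only for $m\geq 3$ (no two consecutive peaks), and that at $m=3$ the recursion involves $\#\widehat{P}(\{1,2\},n)$, which is $0$ since $\{1,2\}$ is inadmissible. Substituting leaves $\binom{n}{2}2^{n-3}-2^{n-1}+1$, matching the stated formula, whose single ($i=1$) summand is $\binom{n}{2}2^{n-3}$ and whose tail is $(3\bmod 2)(2^{n-1}-1)=2^{n-1}-1$. Equivalently, I can take the degenerate value $\#\widehat{P}(\{1,2\},n)=0$ as the base case $m=2$, for which the empty sum and $2\bmod 2=0$ both vanish.

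For the inductive step I will assume the formula for $m-1$ and substitute it into the recursion. The decisive bookkeeping is a reindexing: rewriting the summand of the $(m-1)$-formula under $i\mapsto i+1$ turns the range $1\le i\le m-3$ into $2\le i\le m-2$, and one checks term-by-term that $\binom{n}{(m-1)-i}\bigl(2^{(m-1)-i-1}-1\bigr)2^{n-((m-1)-i+1)}(-1)^{i+1}$ becomes precisely the $i$-th summand of the $m$-formula (with its sign flipped by the leading minus in the recursion). The first term $\binom{n}{m-1}(2^{m-2}-1)2^{n-m}$ of the recursion supplies the missing $i=1$ summand, so the shifted sum together with it reconstitutes $\sum_{i=1}^{m-2}$ for the $m$-case.

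The only remaining point — and the one place a sign slip could hide — is the constant multiple of $2^{n-1}-1$. Collecting the $-(2^{n-1}-1)$ from the recursion with the $+((m-1)\bmod 2)(2^{n-1}-1)$ produced by $-\#\widehat{P}(\{1,m-1\},n)$, I must verify the parity identity $-1+((m-1)\bmod 2)=-(m\bmod 2)$; this holds in both cases (for $m$ even, $-1+1=0$; for $m$ odd, $-1+0=-1$), yielding exactly the $-(m\bmod 2)(2^{n-1}-1)$ tail of the claim and closing the induction. I expect this parity-term tracking, rather than the algebra of the binomial sum, to be the main thing to get right.
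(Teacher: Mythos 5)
Your proposal is correct and follows exactly the paper's route: apply Theorem~\ref{thm:Saganrelation} with $S_1=\{1\}$, $S_2=\{1,m-1\}$, substitute $\#\widehat{P}(\{1\},k)=2^{k-1}-1$ from Proposition~\ref{prop:[1]-Sn}, and close by induction on $m$. The paper compresses the induction to ``the result follows by induction,'' whereas you carry out the base case, the reindexing $i\mapsto i+1$, and the parity bookkeeping explicitly — all of which check out.
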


\begin{proof}

Let $S=\{1,m\}$ be admissible and let $S_1=\{1\}$   and $S_2=\{1,m-1\}$. Recall Theorem  \ref{thm:Saganrelation} provides the following recursive fomula
\begin{align*}
  \#\widehat{P}(S,n)
= &{n \choose m-1} \#\widehat{P}(\{1\},m-1)\#P(\emptyset,n-m+1)  -  \#\widehat{P}(\{1\},n)\\ &  - \#\widehat{P}(\{1,m-1\},n)\\
= &{n \choose m-1} (2^{m-2}-1)(2^{n-m})  -  (2^{n-1}-1)  -  \#\widehat{P}(\{1,m-1\},n).
\end{align*}
To obtain the terms $(2^{m-2}-1)$ and $(2^{n-1}-1) $ apply Proposition \ref{prop:[1]-Sn} and the term $(2^{n-m+1-1})$ follows from Proposition 2 in  \cite{sagan}. The result follows by induction. 
\end{proof}

\begin{proposition}
Let $S=\{1,m,m+2\}$ be admissible, then  $\#\widehat{P}(S,n)$ equals
\begin{eqnarray*}
&&\sum_{i=1}^{m-2}(2^{m-i-1}-1)(-1)^{i+1}(2^{n-m+i-1})\bigg[\binom{n}{m+1} \binom{m+1}{m-i} \frac{1}{2}- \binom{n}{m-i}\bigg]\\ 
& & + (m \mod2)\left(2^{n-1}-1-\binom{n}{m+1} 2^{n-m-2} (2^m-1)\right).
\end{eqnarray*}
\end{proposition}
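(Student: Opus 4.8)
The plan is to mirror the argument for the hyperoctahedral analogue in Proposition \ref{prop:case-1,m,m+2}, using the type-A recursion of Theorem \ref{thm:Saganrelation} in place of its type-B counterpart. First I would set $S_1 = \{1,m\}$ and $S_2 = \{1,m,m+1\}$, so that with $i_s = m+2$ the recursion of Theorem \ref{thm:Saganrelation} reads
\[\#\widehat{P}(S,n) = \binom{n}{m+1}\#\widehat{P}(\{1,m\},m+1)\,2^{n-m-2} - \#\widehat{P}(\{1,m\},n) - \#\widehat{P}(\{1,m,m+1\},n).\]
The key simplification is that $S_2 = \{1,m,m+1\}$ contains the consecutive pair $m,m+1$; since a permutation cannot have peaks at two adjacent positions, $\#\widehat{P}(S_2,n)=0$, and the recursion collapses to just two surviving terms.

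Next I would substitute the closed formula of Proposition \ref{prop:1-m} for both remaining occurrences of $\#\widehat{P}(\{1,m\},\cdot)$. For the evaluation at $n=m+1$, the crucial observation is that the exponent $n-(m-i+1)$ appearing in Proposition \ref{prop:1-m} becomes $(m+1)-(m-i+1)=i$, so that
\[\#\widehat{P}(\{1,m\},m+1) = \sum_{i=1}^{m-2}\binom{m+1}{m-i}(2^{m-i-1}-1)2^{i}(-1)^{i+1} - (m\bmod 2)(2^{m}-1).\]
Multiplying this through by $\binom{n}{m+1}2^{n-m-2}$ and then subtracting the expansion of $\#\widehat{P}(\{1,m\},n)$ produces two sums indexed the same way in $i$, which can be merged.

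The final step is purely bookkeeping: inside the merged sum I would rewrite $2^{n-m-2+i}=\tfrac12\,2^{n-m+i-1}$ so that both pieces carry the common prefactor $(2^{m-i-1}-1)(-1)^{i+1}2^{n-m+i-1}$, leaving the bracket $\big[\tfrac12\binom{n}{m+1}\binom{m+1}{m-i}-\binom{n}{m-i}\big]$; meanwhile the two parity contributions $-(m\bmod2)\binom{n}{m+1}2^{n-m-2}(2^m-1)$ and $+(m\bmod2)(2^{n-1}-1)$ combine into the displayed $(m\bmod 2)$ term. I do not expect any genuine obstacle here—the only points requiring care are the off-by-one exponent juggling when specializing $n\mapsto m+1$ and the sign picked up from subtracting $\#\widehat{P}(\{1,m\},n)$, which is precisely where the factor $\tfrac12$ and the second binomial in the bracket originate.
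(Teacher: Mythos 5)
Your proposal is correct and follows essentially the same route as the paper: the same choice $S_1=\{1,m\}$, $S_2=\{1,m,m+1\}$ with the observation that $\#\widehat{P}(S_2,n)=0$, the same application of Theorem \ref{thm:Saganrelation} (noting $\#P(\emptyset,n-m-1)=2^{n-m-2}$), and the same substitution of Proposition \ref{prop:1-m} at both $n$ and $m+1$. The only difference is that you carry out the algebraic bookkeeping explicitly, which the paper leaves as ``the result follows.''
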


\begin{proof}
We apply the same method as in  Proposition \ref{prop:1-m}. For this we let $S_1 = \{1, m\}$ and  $S_2 = \{1, m, m+1\}$.  Note that $\#\widehat{P}(\{1,m,m+1\},n) = 0$ since we can not have consecutive peaks. Then we construct $\Pi$  based on the number of ways to arrange the permutations in $S_1$ and $S_2$  and use Theorem \ref{thm:Saganrelation} to obtain the recursive formula.
\begin{align*}
\#\widehat{P}(\{1,&m,m+2\}, n)= \Pi - \#\widehat{P}(\{1,m\},n) \\
&={n \choose m+1}\#\widehat{P}( \{1,m \}, m+1)\#{P}( \emptyset,n-(m+2)+1) - \#\widehat{P}(\{1,m\},n) 
\end{align*}
Now for the terms $\#\widehat{P}(\{1,m\},m+1)$ and $\#\widehat{P}(\{1,m\},n)$  apply Proposition \ref{prop:1-m}, and for $\#{P}( \emptyset,n-(m+2)+1)$ apply Proposition 2 in \cite{sagan}. The result follows.
\end{proof}
\begin{proposition}
Let $S = \{1,m,n-1\}$ be admissible, then we have the following recursive formula for $\# \widehat{P} ( S,n)$
\begin{eqnarray*}
&&\sum_{i=1}^{m-2}(2^{m-i-1}-1)(2^{n-m+i-1})(-1)^{i+1}\bigg(\frac{1}{2}{n \choose n-2}{n-2 \choose m-i}-{n \choose m-i}\bigg)\\
&&- (m\mod \ 2) \left( (2^{n-2}-2){n \choose 2}-2^{n-1}+1 \right)- \#\widehat{P}( \left \{1,m,n-2 \right \},n).
\end{eqnarray*}

\end{proposition}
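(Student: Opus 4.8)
The plan is to apply Theorem~\ref{thm:Saganrelation} directly, exactly as in the proofs of Proposition~\ref{prop:1-m} and the preceding $S=\{1,m,m+2\}$ case. For $S=\{1,m,n-1\}$ the largest element is $i_s=n-1$, so $i_s-1=n-2$. Setting $S_1=S-\{i_s\}=\{1,m\}$ and $S_2=S_1\cup\{i_s-1\}=\{1,m,n-2\}$, Theorem~\ref{thm:Saganrelation} gives
\[
\#\widehat{P}(S,n) = {n \choose n-2}\,\#\widehat{P}(\{1,m\},n-2)\,2^{\,n-(n-1)} - \#\widehat{P}(\{1,m\},n) - \#\widehat{P}(\{1,m,n-2\},n).
\]
Since $2^{\,n-(n-1)}=2$, the leading term is $2{n \choose n-2}\#\widehat{P}(\{1,m\},n-2)$, and the term $\#\widehat{P}(\{1,m,n-2\},n)$ is left untouched so that the stated formula is genuinely recursive.

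Next I would substitute the closed form of Proposition~\ref{prop:1-m} into the two unsigned-peak-set-of-size-two terms. For $\#\widehat{P}(\{1,m\},n-2)$ one replaces $n$ by $n-2$ throughout Proposition~\ref{prop:1-m}: each factor $2^{\,n-(m-i+1)}$ becomes $2^{\,n-m+i-3}$, each binomial ${n \choose m-i}$ becomes ${n-2 \choose m-i}$, and the parity term becomes $(m\bmod 2)(2^{\,n-3}-1)$. The term $\#\widehat{P}(\{1,m\},n)$ is substituted as stated.

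The core of the computation is then to merge the two sums. Multiplying the shifted sum by $2{n \choose n-2}$ raises $2^{\,n-m+i-3}$ to $2^{\,n-m+i-2}=\tfrac12\,2^{\,n-m+i-1}$, which is precisely what produces the coefficient $\tfrac12{n \choose n-2}{n-2 \choose m-i}$; subtracting the unshifted sum from Proposition~\ref{prop:1-m} contributes the $-{n \choose m-i}$ term, and the two combine into the single summand $\sum_{i=1}^{m-2}(2^{m-i-1}-1)(2^{\,n-m+i-1})(-1)^{i+1}\bigl(\tfrac12{n \choose n-2}{n-2 \choose m-i}-{n \choose m-i}\bigr)$. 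For the parity contributions, the leading term supplies $-(m\bmod 2){n \choose n-2}(2^{\,n-2}-2)$ (using $2(2^{\,n-3}-1)=2^{\,n-2}-2$) and the subtracted term supplies $+(m\bmod 2)(2^{\,n-1}-1)$; collecting these and using ${n \choose n-2}={n \choose 2}$ yields the factor $-(m\bmod 2)\bigl((2^{\,n-2}-2){n \choose 2}-2^{\,n-1}+1\bigr)$, matching the statement.

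The step requiring the most care is the bookkeeping of the two parity $(m\bmod 2)$ terms together with the single factor of $2$ coming from $2^{\,n-i_s}$: one must track this power of $2$ through the shift $n\mapsto n-2$ so that the $\tfrac12$ lands on exactly the right binomial product, and confirm the identity ${n \choose n-2}={n \choose 2}$ to put the parity term in the displayed form. Beyond this routine algebra there is no genuine obstacle, since the result is recursive and requires neither a fresh induction nor a new bijective construction.
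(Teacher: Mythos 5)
Your proof is correct and follows essentially the same route as the paper: apply Theorem \ref{thm:Saganrelation} with $S_1=\{1,m\}$ and $S_2=\{1,m,n-2\}$, substitute Proposition \ref{prop:1-m} into the two size-two peak-set terms (with $n\mapsto n-2$ in one of them), and leave $\#\widehat{P}(\{1,m,n-2\},n)$ untouched, so the formula stays recursive. In fact your displayed recursion, with leading term $2\binom{n}{2}\#\widehat{P}(\{1,m\},n-2)$ coming from $\binom{n}{n-2}\cdot 2^{\,n-(n-1)}$, is the correct application of the theorem, whereas the paper's own display writes both a prefactor $2$ and the factor $\#P(\emptyset,n-(n-1)+1)=2$, which read literally double-counts a factor of $2$; your algebra (and the stated formula, which you verified it reproduces) corresponds to the single correct factor.
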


\begin{proof}
let $S_1 = \{1, m\}$ and  $S_2 = \{1, m, n-2\}$. Apply Theorem \ref{thm:Saganrelation} to obtain the recursive formula. 
\begin{eqnarray*}
\# \widehat{P} ( \{ 1,m,n-1 \},n) &= & 2{n \choose 2} \#\widehat{P}( \{1,m \},n-2)
\#P( \emptyset,n-(n-1)+1) \\ && -  \#\widehat{P}(\{1,m\},n) -  \#\widehat{P}(\{1,m, n-2\},n). 
\end{eqnarray*}

Now for $\#\widehat{P}( \{1,m \},n-2)$ and $\#\widehat{P}(\{1,m\},n)$ apply Proposition \ref{prop:1-m}. The result follows.
\end{proof}

\subsection{Relationship between $\#P(S,n)$ and $\#\widehat{P}(S,n)$}  In this section we use the relationship between $\#P(S,n)$ and $\#\widehat{P}(S,n)$ to find new formulas for special cases.  We begin by giving this relationship.

\begin{proposition}\label{thm:relation0}
If $S$ is admissible then \[\#P(S,n) = \#\widehat{P}(S,n) + \#\widehat{P}(S\cup\left\{1\right\},n).\]
\end{proposition}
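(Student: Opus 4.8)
The plan is to partition the permutations in $P(S,n)$ according to their behavior at position $1$, in direct analogy with the proof of Proposition~\ref{prop:relation-B_n} for the hyperoctahedral group. For any $\pi = \pi_1\pi_2\cdots\pi_n \in S_n$ with $P(\pi) = S$, exactly one of two things happens at the first position: either $\pi$ ascends at position $1$ (i.e.\ $\pi_1 < \pi_2$), or $\pi$ descends at position $1$ (i.e.\ $\pi_1 > \pi_2$). This dichotomy is well-defined and exhaustive, so we may write $P(S,n) = P_\alpha(S,n) \sqcup P_\beta(S,n)$, where $P_\alpha(S,n)$ consists of those $\pi$ with a descent at position $1$ and $P_\beta(S,n)$ consists of those with an ascent at position $1$.

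The key step is to match each of these two pieces bijectively with one of the hatted peak classes via the operation of prepending $\pi_0 = 0$. First I would observe that since $\pi_1 \in [n]$ is always positive, prepending $0$ creates the inequality $\pi_0 = 0 < \pi_1$ automatically. Hence a permutation $\pi$ has a peak at position $1$ in the sense of $\widehat{P}$ precisely when $0 < \pi_1 > \pi_2$, that is, precisely when $\pi$ descends at position $1$. Therefore prepending $0$ sends $P_\alpha(S,n)$ bijectively onto $\widehat{P}(S\cup\{1\},n)$, since the added peak at position $1$ is the only change to the peak set; this yields $\#P_\alpha(S,n) = \#\widehat{P}(S\cup\{1\},n)$. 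Conversely, if $\pi$ ascends at position $1$, then $\pi_0 = 0 < \pi_1 < \pi_2$, so no peak is created at position $1$ and the peak set is unchanged, giving a bijection from $P_\beta(S,n)$ onto $\widehat{P}(S,n)$ and hence $\#P_\beta(S,n) = \#\widehat{P}(S,n)$.

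Combining the disjoint union with these two cardinality identities yields
\[
\#P(S,n) = \#P_\alpha(S,n) + \#P_\beta(S,n) = \#\widehat{P}(S\cup\{1\},n) + \#\widehat{P}(S,n),
\]
which is the desired relation. I do not expect any genuine obstacle here: the argument is essentially identical to the signed case, and the only point requiring a moment of care is the observation that $\pi_1 > 0$ holds automatically in $S_n$ (unlike in $B_n$, where one must check the sign), so that a descent at position $1$ always produces an honest peak once the zero is prepended. The verification that adding the zero neither creates nor destroys peaks at any position $i \geq 2$ is immediate, since the comparisons defining those peaks involve only the original entries $\pi_1,\dots,\pi_n$.
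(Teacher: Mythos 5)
Your proof is correct and follows essentially the same route as the paper, which omits the proof of this proposition because it is identical to that of Proposition~\ref{prop:relation-B_n}: partition $P(S,n)$ by ascent versus descent at position $1$ and prepend $\pi_0=0$ to biject the two blocks with $\widehat{P}(S,n)$ and $\widehat{P}(S\cup\{1\},n)$. Your added remark that $\pi_1>0$ holds automatically in $S_n$ (so a descent at position $1$ always yields a peak, with no sign check needed) is a correct and worthwhile clarification of the point where the $S_n$ case is simpler than the $B_n$ case.
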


We omit the proof of this proposition since it is identical at the proof of Proposition \ref{prop:relation-B_n}.

\begin{corollary}\label{cor:identity-m-1m}
Let $S=\left\{1,m\right\}$ be an admissible set, then 
\[ \#\widehat{P}(\left\{1,m\right\},n) = \left({n-1\choose m-1}-1\right)2^{n-2} - \#\widehat{P}(\left\{m\right\},n) .\]
\end{corollary}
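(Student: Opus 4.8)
The plan is to specialize the relationship of Proposition \ref{thm:relation0} to the peak set $S = \{m\}$ and then solve for the quantity of interest. First I would apply Proposition \ref{thm:relation0} with $S = \{m\}$, which gives
\[\#P(\{m\},n) = \#\widehat{P}(\{m\},n) + \#\widehat{P}(\{m\}\cup\{1\},n) = \#\widehat{P}(\{m\},n) + \#\widehat{P}(\{1,m\},n),\]
and then rearrange to isolate the target, obtaining $\#\widehat{P}(\{1,m\},n) = \#P(\{m\},n) - \#\widehat{P}(\{m\},n)$. At this point the only thing left is to produce a closed form for $\#P(\{m\},n)$.

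To evaluate $\#P(\{m\},n)$ I would invoke the symmetric-group version of the main counting theorem, namely $\#P(S,n) = p(n)2^{n-s-1}$ from Theorem 3 of \cite{sagan}, together with the explicit polynomial for a singleton peak set. By Corollary \ref{thm:m} we have $p_B(\{m\},n) = \binom{n-1}{m-1} - 1$, and by Theorem \ref{thm:polynomials-same} the polynomial for $B_n$ coincides with the one for $S_n$, so the same formula $p(\{m\},n) = \binom{n-1}{m-1} - 1$ holds in the symmetric group. Since $s = |\{m\}| = 1$, the exponent becomes $2^{n-s-1} = 2^{n-2}$, and hence $\#P(\{m\},n) = \left(\binom{n-1}{m-1} - 1\right)2^{n-2}$. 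Substituting this into the isolated expression yields precisely the claimed identity.

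There is no genuine obstacle in this argument; it is a direct substitution built entirely on results already established. The only points that require care are bookkeeping: confirming that the explicit singleton polynomial from Corollary \ref{thm:m} may legitimately be transported from $B_n$ to $S_n$ through the equality of polynomials in Theorem \ref{thm:polynomials-same}, and correctly reading off the power of two as $2^{n-2}$ from the cardinality $s = 1$. Both are immediate, so the corollary follows in a few lines.
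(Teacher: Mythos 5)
Your proof is correct and follows essentially the same route as the paper: both apply Proposition \ref{thm:relation0} to the singleton set $S=\{m\}$ and substitute the closed form $\#P(\{m\},n) = \left(\binom{n-1}{m-1}-1\right)2^{n-2}$. The only difference is cosmetic: the paper cites Theorem 6 of \cite{sagan} directly for that closed form, whereas you recover it from Corollary \ref{thm:m} and Theorem \ref{thm:polynomials-same}, which is equally valid since those results establish the same formula for the symmetric group.
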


\begin{proof}
We apply Proposition \ref{thm:relation0} to the case $S = \left\{m\right\}$, using the fact that $\#{P}(\left\{m\right\},n) =  \left({n-1\choose m-1}-1\right)2^{n-2}$ from Theorem 6 in \cite{sagan}.
\end{proof}
\begin{corollary}
Let $S= \left\{1, n-1\right\}$ be an admissible set then,
\[  \#\widehat{P}(S, n) = 2^{n-2}(n-2) - (n-1). \]
\end{corollary}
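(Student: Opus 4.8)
The plan is to obtain the formula as a direct specialization of Corollary \ref{cor:identity-m-1m}, which already expresses $\#\widehat{P}(\{1,m\},n)$ in terms of the single-peak quantity $\#\widehat{P}(\{m\},n)$. Setting $m = n-1$ there gives
\[ \#\widehat{P}(\{1,n-1\},n) = \left(\binom{n-1}{n-2}-1\right)2^{n-2} - \#\widehat{P}(\{n-1\},n). \]
Since $\binom{n-1}{n-2} = n-1$, the bracketed factor simplifies to $n-2$, so the first summand is exactly $(n-2)2^{n-2}$, which already matches the leading term of the claimed formula.

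It then remains only to evaluate the single-peak term $\#\widehat{P}(\{n-1\},n)$. Here I would invoke Proposition \ref{alex's-formula}, which is tailored to peak sets of the form $\{n-m\}$: taking $m=1$ (so that $n-m = n-1$) collapses the sum to its single $i=0$ term, giving
\[ \#\widehat{P}(\{n-1\},n) = 2^0\binom{n-1}{1} = n-1. \]
Substituting this back yields $\#\widehat{P}(\{1,n-1\},n) = (n-2)2^{n-2} - (n-1)$, which is the desired identity.

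There is essentially no obstacle here beyond bookkeeping: the only points requiring care are the simplification $\binom{n-1}{n-2}-1 = n-2$ and the correct parameter matching in Proposition \ref{alex's-formula}, namely recognizing that a single peak at position $n-1$ corresponds to $m=1$ in that proposition's $\{n-m\}$ notation. One could alternatively compute $\#\widehat{P}(\{n-1\},n)$ from the closed form in Proposition \ref{closed-m} or from the recursion in Lemma \ref{lemma:recursiveformula}, but Proposition \ref{alex's-formula} is the cleanest route and is precisely the tool flagged in the remark following it as being useful for handling the set $\{1,n-1\}$.
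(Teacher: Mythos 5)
Your proof is correct and follows essentially the same route as the paper: specialize Corollary \ref{cor:identity-m-1m} to $m=n-1$ and evaluate $\#\widehat{P}(\{n-1\},n)=n-1$ via Proposition \ref{alex's-formula} (the paper invokes Remark \ref{remark-mcase}, which is just that proposition after a change of variable, so the computation is identical).
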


\begin{proof}
Note that  $S= \left\{1, n-1\right\}$ is a special case of $S= \left\{1, m\right\}$ with $m=n-1$. Now apply Corollary \ref{cor:identity-m-1m} and Remark \ref{remark-mcase}. Then we have, 
\begin{align*}
\#\widehat{P}(\left\{1,n-1\right\},n) &= \left({n-1\choose (n-1)-1}-1\right)2^{n-2} -  \sum_{i=0}^{n-((n-1)+1)} 2^i \binom{(n-1)+i}{i+1}\\
&= 2^{n-2}(n-2)-(n-1).\qedhere
\end{align*}

\end{proof}

We propose now a result that also comes as a result of Proposition \ref{thm:relation0}.

\begin{corollary}\label{cor:identity-2case}
Let $S = \left\{i_1,i_2,\ldots,i_s\right\}$ where $i_1 = 2$. Then \[\#\widehat{P}(S,n) = p(n)2^{n-s-1}\] where $p(n) = p(S,n)$ is an polynomial depending on $S$ with degree $i_s-1$, such that $p(n)$ is an integer for all integral $n$.

Furthermore, if we let $m = max(S)$, $S_1 = S - \left\{m\right\}$, and $S_2 = S_1 \cup \left\{m-1\right\}$, then \[p(S,n) = p(S_1,m-1){n \choose m-1} - 2p(S_1,n)-p(S_2,n).\]
\end{corollary}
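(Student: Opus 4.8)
The plan is to deduce this corollary from the $\#P(S,n) = \#\widehat{P}(S,n) + \#\widehat{P}(S\cup\{1\},n)$ relationship of Proposition \ref{thm:relation0}, combined with the known structure of $\#P(S,n)$ from Theorem 3 in \cite{sagan}. The key observation is that when $i_1 = 2$, the set $S \cup \{1\}$ contains the two consecutive integers $1$ and $2$, which is impossible for a peak set; hence $\#\widehat{P}(S\cup\{1\},n) = 0$. Therefore Proposition \ref{thm:relation0} collapses to $\#\widehat{P}(S,n) = \#P(S,n)$, and everything we want to prove about $\#\widehat{P}(S,n)$ is inherited directly from the corresponding statement for $\#P(S,n)$.

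With that reduction in hand, the first claim follows immediately: by \cite{sagan} we have $\#P(S,n) = p(n)2^{n-s-1}$ where $p(n) = p(S,n)$ is an integer-valued polynomial in $n$ of degree $i_s - 1$ depending on $S$. Since $\#\widehat{P}(S,n) = \#P(S,n)$ under the hypothesis $i_1 = 2$, the same polynomial $p(n)$ and the same factorization $p(n)2^{n-s-1}$ describe $\#\widehat{P}(S,n)$, with all the stated properties (polynomiality, degree $i_s-1$, integrality at integer $n$) carried over verbatim.

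For the recursive formula, I would invoke the recursion for $p(S,n)$ in $S_n$ established in \cite{sagan}, which is the type-A analogue of Corollary \ref{cor:maincorollary}; setting $m = \max(S) = i_s$, $S_1 = S - \{m\}$, and $S_2 = S_1 \cup \{m-1\}$, that recursion reads exactly
\[
p(S,n) = p(S_1,m-1){n \choose m-1} - 2p(S_1,n) - p(S_2,n).
\]
Because the polynomial attached to $\#\widehat{P}(S,n)$ is literally the same polynomial $p(S,n)$ attached to $\#P(S,n)$, this recursion transfers without change. One must only check that $S_1$ and $S_2$ still satisfy $i_1 = 2$ (so that the identification $\#\widehat{P} = \#P$ applies to them as well): since $i_1 = 2$ is the smallest element of $S$ and both $S_1 = S - \{m\}$ and $S_2 = S_1 \cup \{m-1\}$ retain $2$ as their minimum whenever $s \geq 2$, this holds, while for $s = 1$ the set $S = \{2\}$ makes $S_1 = \emptyset$ and the recursion reduces to a directly verifiable base identity.

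The only genuine obstacle is the degenerate bookkeeping at the bottom of the recursion, namely ensuring that $S_1$ and $S_2$ remain admissible and still have minimum element $2$ so that the passage through Proposition \ref{thm:relation0} is legitimate at every step; this is a matter of tracking the hypothesis $i_1 = 2$ carefully rather than any real difficulty. Once that is confirmed, the corollary is an essentially immediate consequence of Proposition \ref{thm:relation0}, the vanishing of $\#\widehat{P}(S\cup\{1\},n)$, and the corresponding results for $S_n$ proved in \cite{sagan}.
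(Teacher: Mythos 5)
Your proposal is correct and follows essentially the same route as the paper: apply Proposition \ref{thm:relation0}, observe that $\#\widehat{P}(S\cup\{1\},n)=0$ because $1,2$ are consecutive, conclude $\#\widehat{P}(S,n)=\#P(S,n)$, and then import the factorization, degree, integrality, and recursion from Theorem 3 of \cite{sagan}. Your extra bookkeeping about $S_1$ and $S_2$ retaining minimum $2$ is harmless but not needed, since the recursion is a statement about the type-A polynomials $p(S_1,\cdot)$, $p(S_2,\cdot)$ from \cite{sagan}, which hold irrespective of whether those sets contain $2$.
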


\begin{proof}
We apply Proposition \ref{thm:relation0}, and note that $\#\widehat{P}(S \cup \left\{1\right\},n) = 0$ if $2 \in S$. Thus, \[\#\widehat{P}(S,n) = \#{P}(S,n).\] 
Hence, we can apply Theorem 3 in \cite{sagan} for $\#P(S,n)$ to this special case of $\#\widehat{P}(S,n)$.
\end{proof}


\subsection{Parity of $\#\widehat{P}(S,n)$}   Notice from the previous results how the number of permutations varies according 
 to the parity of some integer related to the peaks. This lead us to establish a relation between the parity of the numbers of permutations with a given peak set and the parity of the numbers in the peak set. 

\begin{theorem}
Let $S = \left\{i_1,i_2,\ldots,i_s\right\}$ be admissible. Then $\#\widehat{P}(S,n)$ is even if and only if  there exists $i_j \in S$ such that $i_j$ is even.
\end{theorem}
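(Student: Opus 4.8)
The plan is to induct on the sum $i_1 + i_2 + \cdots + i_s$, exactly as in the parity theorem for $\#\widehat{P}_B(S,n)$ proved earlier, since the recursion in Theorem \ref{thm:Saganrelation} has the same structure as Theorem \ref{thm:SaganrelationBn0}. The claim to prove is cleaner than its $B_n$ counterpart: here parity depends only on whether $S$ contains an even element, with no dependence on the parity of $n$. The base case is $S = \emptyset$, where by Proposition \ref{prop:emptyset-Sn} we have $\#\widehat{P}(\emptyset,n) = 1$, which is odd, consistent with $S$ containing no even element.

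For the inductive step I would invoke Theorem \ref{thm:Saganrelation} with $S_1 = S - \{i_s\}$ and $S_2 = S_1 \cup \{i_s-1\}$, giving
\[
\#\widehat{P}(S,n) = {n \choose i_s-1}\#\widehat{P}(S_1,i_s-1)2^{n-i_s} - \#\widehat{P}(S_1,n) - \#\widehat{P}(S_2,n).
\]
The first term is even as long as $n - i_s \geq 1$, i.e. $n > i_s$; but this needs care, since when $n = i_s$ the factor $2^{n-i_s} = 1$ and the term need not be even. I would handle this by noting that when $n = i_s$, the exponent is $0$, so I must instead track the parity of ${n \choose i_s-1}\#\widehat{P}(S_1,i_s-1)$ directly, or equivalently argue the edge case $n = i_s$ separately. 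Assuming the first term is even, the parity of $\#\widehat{P}(S,n)$ equals that of $\#\widehat{P}(S_1,n) + \#\widehat{P}(S_2,n)$, and I would split into cases according to whether $S$ contains an even element.

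In the case analysis: if some $i_j \in S_1$ is even, then $S_2$ also contains that even element (since we only modified $i_s$), so by induction both $\#\widehat{P}(S_1,n)$ and $\#\widehat{P}(S_2,n)$ are even, their sum is even, matching the claim that $\#\widehat{P}(S,n)$ should be even. If $S_1$ contains no even element but $i_s$ is even, then $S_1$ has no even element (so $\#\widehat{P}(S_1,n)$ is odd by induction) while $i_s - 1$ is odd so $S_2$ also has no even element (so $\#\widehat{P}(S_2,n)$ is odd); their sum is even, again matching that $S$ does contain an even element. Finally, if $S$ contains no even element at all, then $i_s$ is odd, so $i_s - 1$ is even and $S_2$ contains an even element: by induction $\#\widehat{P}(S_1,n)$ is odd and $\#\widehat{P}(S_2,n)$ is even, so their sum is odd, matching the claim.

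The main obstacle I anticipate is the boundary case $n = i_s$, where the power of two collapses to $1$ and the leading term is no longer automatically even. I expect to resolve this by checking that when $n = i_s$ the admissibility forces $\#\widehat{P}(S_1,i_s-1)$ to be computed at its minimal $n$, and by folding this endpoint into a slightly strengthened induction or treating it as a direct base-adjacent verification; alternatively, one observes that the statement is vacuous or reduces to the $S_1$ case in that regime. Everything else is routine once the recursion's leading term is confirmed even, since the three-way case split on evenness of $S$ versus evenness of $i_s$ mirrors the $B_n$ argument with the $n$-parity bookkeeping removed.
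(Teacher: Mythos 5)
Your proposal is correct and takes essentially the same approach as the paper: induction on $i_1+i_2+\cdots+i_s$, the recursion of Theorem~\ref{thm:Saganrelation} with $S_1 = S-\{i_s\}$ and $S_2 = S_1\cup\{i_s-1\}$, and the same parity case analysis on even elements of $S$. The boundary case $n = i_s$ that you flag never actually arises, since a peak at position $i_s$ requires $i_s + 1 \leq n$, so $n$-admissibility forces $i_s < n$ and the leading term $\binom{n}{i_s-1}\#\widehat{P}(S_1,i_s-1)2^{n-i_s}$ is always even --- this is exactly the one-line observation the paper uses to dispatch it.
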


\begin{proof}

We induct on $i_1+i_2+\cdots+i_s$.  Our base case is $i_1+i_2\cdots+i_s = 0$, where, $S = \emptyset$. Clearly, $S$ contains no even elements. By Proposition \ref{prop:emptyset-Sn}, $\#P(\emptyset,n) = 1$ for all $n$, thus our claim holds.

Now, by Theorem \ref{thm:Saganrelation} if we let $S_1 = S - \left\{i_s\right\}$ and $S_2 = S_1 \cup \left\{i_s - 1\right\}$, then \[ \#\widehat{P}(S,n) = {n \choose i_s-1}2^{n-i_s}\#\widehat{P}(S_1,i_s-1) - \#\widehat{P}(S_1,n) - \#\widehat{P}(S_2,n). \]
$S$ admissible implies $i_s < n$.  Since $n-i_s > 0$, then ${n \choose i_s-1}2^{n-i_s}\#\widehat{P}(S_1,i_s-1)$ is even in all cases.

Assume that $\#\widehat{P}(S,n)$ is even. Then either $\#\widehat{P}(S_1,n)$ and $\#\widehat{P}(S_2,n)$ are both even, or they are both odd. If $\#\widehat{P}(S_1,n)$ is even, then, by the inductive hypothesis, $S_1$ contains some even element. Since $S_1 \subset S$, $S$ then contains some even element. If $\#\widehat{P}(S_2,n)$ is odd, then, by the inductive hypothesis, all its elements are odd, including $i_s - 1$. Therefore $i_s$ is even, hence $S$ contains some even element.

Assume that $S$ contains some even element. Then either $S_1$ contains some even element and $i_s$ is even, or $S_1$ contains some even element and $i_s$ is odd, or $S_1$ does not contain any even elements and $i_s$ is even. In the first case and the second case, by inductive hypothesis $\#\widehat{P}(S_1,n)$ and $\#\widehat{P}(S_2,n)$ are both even, then $\#\widehat{P}(S,n)$ is even. In the third case, $i_s-1$ is odd, thus by the inductive hypothesis $\#\widehat{P}(S_1,n)$ and $\#\widehat{P}(S_2,n)$ are both odd, hence $\#\widehat{P}(S,n)$ is even.

\end{proof}

\section{Acknowledgements}
We thank Ivelisse Rubio, and MSRI for their help and support.  This work was conducted at  MSRI-UP and supported by the National Security Agency (NSA) grant H-98230-13-1-0262 and the National Science Foundation (NSF) grant 1156499.



\begin{thebibliography}{99}

\bibitem{aguiarbergeronnyman} M. Aguiar, N. Bergeron and K.  Nyman,  The peak algebra and the descent algebras of types B and D.  Aguiar, \emph{Trans. Amer. Math. Soc.} {\bf 356} (2004), no. 7, 2781Ð2824.

\bibitem{aguiarnymanorellana} M. Aguiar,  K. Nyman  and R. Orellana,    New results on the peak algebra. \emph{J. Algebraic Combin.}  {\bf 23} (2006), no. 2, 149Ð188.

\bibitem{bergeron} N. Bergeron and C. Hohlweg, Colored peak algebras and Hopf algebras. \emph{J. Algebraic Combin.}  {\bf 24}, no. 3, 299-330.

\bibitem{sagan} S. Billey, K.  Burdzy and B. Sagan, Permutations with given peak set.  \emph{J. of Integer Seq.}, Vol. 16 (2013) Article 13.6.1, 18 pages. 

\bibitem{billeysagan2} S. Billey, K. Burdzy, S. Pal and B. Sagan,  On meteors, earthworms and WIMPs.  Preprint available at http://www.mth.msu.edu/~sagan/Papers/Old/mew.pdf

\bibitem{kasraoui} A. Kasraoui, The most frequent peak set in a random permutation.  Preprint available at http://arxiv.org/abs/1210.5869.

\bibitem{nyman} K. Nyman, The Peak Algebra of the Symmetric Group. \emph{J. Algebraic Combin.} {\bf 17}  (2003) 309-322.

\bibitem{petersen}  T. K. Petersen,  Enriched P-partitions and peak algebras. \emph{Adv. Math.} {\bf 209} (2007), no. 2, 561Ð610. 

\bibitem{stembridge} J. Stembridge,  Enriched P-partitions, \emph{Trans. Amer. Math. Soc.} 349 (1997), no. 2, 763-788.

\bibitem{strehl} V. Strehl, Enumeration of alternating permutations according to peak sets, \emph{J. Combin. Theory Ser. A} {\bf 24} (1978) 238-240.

\bibitem{warren} D. Warren and E. Seneta, Peaks and Eulerian numbers in a random sequence, \emph{J. Appl. Probab.} {\bf 33} (1996) 101-114. 
\end{thebibliography}
\end{document}